  \theoremstyle{plain}
  \newtheorem{thm}{\protect\theoremname}[section]
  \theoremstyle{remark}
  \newtheorem{rem}[thm]{\protect\remarkname}
  \theoremstyle{definition}
  \newtheorem*{example*}{\protect\examplename}
  \theoremstyle{definition}
  \newtheorem{example}[thm]{\protect\examplename}
  \theoremstyle{plain}
  \newtheorem{lem}[thm]{\protect\lemmaname}
  \theoremstyle{plain}
  \newtheorem{prop}[thm]{\protect\propositionname}
  \theoremstyle{plain}
  \newtheorem{cor}[thm]{\protect\corollaryname}
  \theoremstyle{definition}
  \newtheorem{definition}[thm]{\protect\definitionname}
 \numberwithin{equation}{section}
  \providecommand{\definitionname}{Definition}
  \providecommand{\examplename}{Example}
  \providecommand{\lemmaname}{Lemma}
  \providecommand{\propositionname}{Proposition}
  \providecommand{\remarkname}{Remark}
  \providecommand{\theoremname}{Theorem}
\providecommand{\theoremname}{Theorem}
 \providecommand{\corollaryname}{Corollary}
\def\N{{\mathbb N}}
\def\Q{{\mathbb Q}}
\def\R{{\mathbb R}}
\def\Z{{\mathbb Z}}
\def\A{{\mathbb A}}
\def\p{{\bf p}}
\def\q{{\bf q}}
\def\x{{\bf x}}
\def\vl{{v}}
\def\Ql{{\mathbb Q_{\ell}}}
\def\CVD{{\hfill\hfil{\lower 2pt\hbox{\vrule\vbox to 7pt
{\hrule width  5pt\varphifill\hrule}\varphirule}}}\par}
\newcommand{\craz}{C_1}
\newcommand{\ceigen}{C_2}
\newcommand{\ceigensnd}{C_3}
\newcommand{\ceigentrd}{C_4}
\newcommand{\abs}[1]{\left| #1 \right|}
\newcommand{\floor}[1]{\left\lfloor #1 \right\rfloor}
\newcommand{\pfloor}[1]{\left\lfloor #1 \right\rfloor_{\ell}}
\newcommand{\sd}{\sqrt{|\Delta|}}
\title[Periodicity of $\ell$-adic continued fractions]{An effective criterion for periodicity of $\ell$-adic continued fractions}
\author[L. Capuano]{Laura Capuano}
\address{Mathematical Institute \\
University of Oxford \\
Oxford OX2 6GG \\
UK}
\email{Laura.Capuano@maths.ox.ac.uk}
\author[F. Veneziano]{Francesco Veneziano}
\address{Centro di Ricerca Matematica Ennio De Giorgi \\
Piazza dei Cavalieri, 3, 56126 Pisa\\ 
Italy}
\email{francesco.veneziano@sns.it}
\author[U. Zannier]{Umberto Zannier}
\address{Scuola Normale Superiore\\
Piazza dei Cavalieri 7, 56126 Pisa\\
Italy}
\email{u.zannier@sns.it}
\subjclass[2010]{11J70, 11D88, 11Y16}
\begin{document}

\begin{abstract}
The theory of continued fractions has been generalized to $\ell$-adic numbers by several authors and presents many differences with respect to the real case. In the present paper we investigate the expansion of rationals and quadratic irrationals for the $\ell$-adic continued fractions introduced by Ruban. In this case, rational numbers may have a periodic non-terminating continued fraction expansion; moreover, for quadratic irrational numbers, no analogue of Lagrange's theorem holds. 
We give general explicit criteria to establish the periodicity of the expansion in both the rational and the quadratic case (for rationals, the qualitative result is due to Laohakosol \cite{Laohakosol85}).  
\end{abstract}

\maketitle

\section{Introduction}
The theory of real continued fractions plays a central role in real Diophantine Approximation for many different reasons, in particular because the convergents of the simple continued fraction expansion of a real number $\alpha$ give the best rational approximations to $\alpha$. 
Motivated by the same type of questions, several authors (Mahler \cite{Mahler34}, Schneider \cite{Schneider70},  Ruban \cite{Ruban70}, Bundschuh \cite{Bundschuh77} and
Browkin \cite{Browkin78}) have generalized the theory of real continued fractions to the $\ell$-adic case in various ways.

\medskip

There is no canonical way to define a continued fraction expansion in this context, as we lack a canonical $\ell$-adic analogue of the integral part. The $\ell$-adic process which is the most similar to the classical real one was mentioned for the first time in one of the earliest papers on the subject by Mahler \cite{Mahler34}, and then studied accurately by Ruban \cite{Ruban70}, who showed that these continued fractions enjoy nice ergodic properties. 

Ruban's continued fractions will be the subject of this paper and they have many important differences with respect to the classical real ones. First of all, while some rational numbers have a finite expansion, this is not\,---\,unlike the real case\,---\,the only possible behaviour. For example, it is easy to see that negative rational numbers cannot admit a terminating Ruban continued fraction. 

It is however possible to decide when a given rational number admits a finite Ruban continued fraction expansion and indeed our first result is the following:

\begin{thm}\label{T.terminating} Let $\ell$ be a prime number and $\alpha\in\Q$ be a rational number.  
\begin{enumerate}
    \item[(i)] The Ruban continued fraction expansion of $\alpha$ terminates if and only if all complete quotients are non-negative; there is an algorithm to decide in a finite number of steps whether this happens.
 \item[(ii)] If the Ruban continued fraction expansion of $\alpha$ does not terminate, then it is periodic with all partial quotients eventually   equal to $\ell -\ell^{-1}$; in this case, the 
pre-periodic part can be effectively computed.
\end{enumerate}
\end{thm}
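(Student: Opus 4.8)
The plan is to track the complete quotients $\alpha_n$, which are all rational, since each partial quotient $b_n=\pfloor{\alpha_n}$ lies in $\Q\cap[0,\ell)$ and the recursion $\alpha_{n+1}=1/(\alpha_n-b_n)$ preserves $\Q$. The organizing principle I would use is a sign dichotomy: because $b_n\ge 0$, if $\alpha_n<0$ then $\alpha_n-b_n<0$ and hence $\alpha_{n+1}<0$, so the set of indices with $\alpha_n<0$ is upward closed. Consequently exactly one of two things happens: either every complete quotient is $\ge 0$, or there is a first index beyond which all complete quotients are negative. I will show that the first alternative is equivalent to termination (part (i)), while the second forces the eventual periodicity with partial quotient $\ell-\ell^{-1}$ (part (ii)).

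For the non-negative regime I would prove an effective height bound. Writing $\alpha_n=p_n/q_n$ in lowest terms with $q_n>0$, and using that for $n\ge 1$ one has $v_\ell(\alpha_n)\le -1$ (so $q_n=\ell^{e_n}q_n'$ with $e_n\ge 1$, $\ell\nmid p_nq_n'$, and $b_n=c_n/\ell^{e_n}$ with $c_n\ge 1$), a direct computation gives $\alpha_{n+1}=\ell^{e_n}q_n'/(p_n-c_nq_n')$. When $\alpha_n\ge 0$ and $\alpha_{n+1}\ge 0$ the integer $p_n-c_nq_n'$ is positive and strictly smaller than $p_n$, while the reduced numerator of $\alpha_{n+1}$ divides $q_n'\le q_n/\ell$; hence both the numerator and the denominator of $\alpha_{n+1}$ are at most $H(\alpha_n)/\ell$, where $H(\alpha_n)=\max(\abs{p_n},q_n)$. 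Thus along any stretch of non-negative complete quotients the height decays geometrically, and the expansion must terminate after $O(\log H(\alpha))$ steps. Conversely, if the expansion terminates then the last complete quotient equals its own floor and is $\ge 0$, and a downward induction through $\alpha_n=b_n+1/\alpha_{n+1}$ (with $b_n\ge 0$ and $\alpha_{n+1}>0$) shows every complete quotient is positive. This establishes the characterization in (i), and the explicit bound on the number of steps yields the decision algorithm.

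In the negative regime the expansion cannot terminate (termination would force non-negativity), so I must instead show that it is eventually periodic. The strategy is again to bound the heights of the complete quotients, now showing that they lie in a finite, effectively computable set; eventual periodicity then follows from the pigeonhole principle, and the preperiod is found by iterating until a value repeats. It remains to identify the cycle. Substituting $\beta_n=-\alpha_n>0$ turns the recursion into $\beta_{n+1}=1/(\beta_n+b_n)$ with $b_n=\pfloor{-\beta_n}$; one checks that $-\ell^{-1}$ is a rational fixed point, with $\pfloor{-\ell^{-1}}=\ell-\ell^{-1}$ and $-\ell^{-1}=(\ell-\ell^{-1})+1/(-\ell^{-1})$. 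I would then argue that any purely periodic rational Ruban expansion must be this one: a period of length $m$ makes $\alpha$ a fixed point of the product of the associated rational matrices, hence a root of a rational quadratic, and requiring a rational root together with the constraints that every complete quotient in the cycle be negative and satisfy $b_n=\pfloor{\alpha_n}$ forces the cycle to collapse to the single value $-\ell^{-1}$. Hence the partial quotients are eventually $\ell-\ell^{-1}$, as claimed.

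The main obstacle is the effective height bound on the complete quotients in the negative regime. Unlike the non-negative case, here neither the archimedean size $\abs{\alpha_n}$ nor the valuation $v_\ell(\alpha_n)$ is monotone: the denominator $\abs{p_n-c_nq_n'}=\abs{p_n}+c_nq_n'$ can exceed $\abs{p_n}$, and one meets explicit chains (for instance $-1/15\mapsto -5/12\mapsto -4/3\mapsto -1/3$ when $\ell=3$) along which the naive estimates oscillate. The quantity that appears to be essentially non-increasing is $M_n=\max(\abs{p_n},q_n')$, formed from the numerator and the prime-to-$\ell$ part of the denominator, but the recursion only yields $M_{n+1}\le M_n(1+\ell^{-2})$, which is too lossy to conclude boundedness on its own. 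Closing this gap — by producing a genuine potential function, controlling $e_n=-v_\ell(\alpha_n)$, and showing that the slack in the estimate cannot accumulate — is the technical heart of the argument and the step I expect to require the most care.
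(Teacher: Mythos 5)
Your treatment of part (i) is correct and takes a genuinely different route from the paper: where the paper bounds $\varphi_{2k}=p_{2k}-\alpha q_{2k}$ archimedeanly by $1/q_{2k-1}$ and $\ell$-adically by $\ell^{s_{2k+1}-e_0}$ to force $\varphi_{2k}=0$ once $\ell^{s_{2k+1}}$ exceeds the denominator of $\alpha$, you show directly that along a stretch of non-negative complete quotients the reduced height drops by a factor of $\ell$ at each step (since $\ell^{e_n}$ divides $p_n-c_nq_n'$ and $c_nq_n'\geq 1$). Both yield an $O(\log_\ell H(\alpha))$-step decision procedure; your version is arguably more elementary. The upward-closedness of the set of negative indices is also correct and matches the paper's Lemma on negative embeddings.

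Part (ii), however, contains a genuine gap, and you identify it yourself: you never establish that the complete quotients in the negative regime lie in a finite set, and your candidate potential $M_n=\max(\abs{p_n},q_n')$ with $M_{n+1}\leq M_n(1+\ell^{-2})$ does not close. The paper's resolution is to abandon the complete quotients as the primary object and instead track $\beta_n:=b\,\varphi_n\,\ell^{e_0-s_{n+1}}$, where $\varphi_n=p_n-\alpha q_n$ is built from the \emph{convergents} and $b$ is a denominator of $\alpha$. This $\beta_n$ is a nonzero integer, so $\abs{\beta_n}\geq 1$; on the other hand Proposition~\ref{P.qnbound} gives $\abs{\varphi_n}\leq(\lambda(a_0)+\abs{\alpha})\prod_{i=1}^{n-1}\lambda(a_i)$, and the elementary inequality $\lambda(a)\leq\ell^{e}$ for a partial quotient with denominator $\ell^{e}$ --- with equality only for $a=\ell-\ell^{-1}$, and with $\lambda(a)\ell^{-e}\leq C_1^{e}$, $C_1=1-\tfrac{3}{4\ell^2}<1$, otherwise --- turns the product $\prod\lambda(a_i)\ell^{-e_i}$ into a quantity that decays geometrically in $\sigma_n=\sum' e_i$ (sum over indices with $a_i\neq\ell-\ell^{-1}$). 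The tension $1\leq\abs{\beta_n}\leq\frac{2}{\ell}H(\alpha)C_1^{\sigma_n}$ bounds $\sigma_n$, hence forces all but finitely many partial quotients to equal $\ell-\ell^{-1}$; boundedness of the $\alpha_n=-\beta_{n-1}/(\ell^{e_n}\beta_n)$ and the explicit preperiod bound then fall out of $\abs{\beta_n}\leq\frac{2}{\ell}H(\alpha)$. Note that this argument also delivers the identification of the period for free, so the paper never needs your separate claim that the only purely periodic rational cycle is $-\ell^{-1}$ --- a claim which, as written (``one checks \dots forces the cycle to collapse''), is asserted rather than proved and would itself require work via equation~\eqref{E.quadratic}. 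As it stands, your proposal proves part (i) but not part (ii).
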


Disregarding the computability aspect, the last part of this  result has already appeared in the literature, due to Laohakosol and, independently, to Wang (see \cite{Laohakosol85} and \cite{Wang85}), but this does not seem to be the case for either of the algorithmic conclusions, which apparently do not follow directly from the proofs in \cite{Laohakosol85} and \cite{Wang85}. For completeness, we have also included our own (short) proof of the qualitative part, which is quite  different.

The conclusion of Theorem~\ref{T.terminating} depends of course on the precise algorithm defining the continued fraction expansion. In \cite{Browkin78}, Browkin modified Ruban's definition so that every rational number has a finite $\ell$-adic continued fraction expansion.

\medskip

Another natural question arises when one considers the periodicity of Ruban continued fractions. In the classical real case, Lagrange's theorem states that a real number has an infinite periodic continued fraction if and only if it is quadratic irrational.  We will show that this is not true in the $\ell$-adic case and only some similarities can be recovered. For example, in Section~\ref{S.purely} we will prove the following result:

\begin{thm}\label{T.periodic}
An element $\alpha\in\Q_\ell$ whose Ruban continued fraction expansion is periodic is either rational or quadratic irrational over $\Q$ and such that $\Q(\alpha)$ can be embedded in $\R$. 
\end{thm}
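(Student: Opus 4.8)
The plan is to exploit two structural features of Ruban continued fractions: the complete quotients $\alpha_n$ obey $\alpha_{n+1}=1/(\alpha_n-b_n)$ with partial quotients $b_n=\lfloor\alpha_n\rfloor_\ell$ lying in $[0,\ell)$ as real numbers, and those $b_n$ occurring in the periodic tail are \emph{strictly positive}. The positivity is exactly what will force $\mathbb{Q}(\alpha)$ to embed in $\mathbb{R}$.

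First I would reduce to the purely periodic case. Let $N$ be such that the block $b_N,\dots,b_{N+L-1}$ repeats, so that $\alpha_N=\alpha_{N+L}$; after enlarging $N$ I may assume $N\ge 1$, so that every complete quotient in the tail has negative $\ell$-adic valuation. Then each $b_i$ with $i\ge N$ has $v_\ell(b_i)=v_\ell(\alpha_i)<0$, hence its $\ell$-adic expansion has a nonzero term $a_v\ell^v$ with $v<0$ and $a_v\ge 1$, giving $b_i\ge \ell^{v}>0$. Moreover $\alpha=\frac{p_{N-1}\alpha_N+p_{N-2}}{q_{N-1}\alpha_N+q_{N-2}}$ with $p_i,q_i\in\mathbb{Q}$ (continuants of the rational $b_i$) and determinant $(-1)^{N-1}\ne 0$, so $\mathbb{Q}(\alpha)=\mathbb{Q}(\alpha_N)$ and it suffices to treat $\alpha_N$.

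Next I would extract the quadratic. Setting $\begin{pmatrix}P&P'\\Q&Q'\end{pmatrix}=\prod_{i=N}^{N+L-1}\begin{pmatrix}b_i&1\\1&0\end{pmatrix}$, periodicity yields $\alpha_N=\frac{P\alpha_N+P'}{Q\alpha_N+Q'}$, i.e. $\alpha_N$ is a root of $Qx^2+(Q'-P)x-P'=0$, a polynomial with rational coefficients. Thus $\alpha_N$, and therefore $\alpha$, is rational or quadratic irrational over $\mathbb{Q}$; this is the first assertion. For the second, I would look at the discriminant $(Q'-P)^2+4QP'$. The entries $P'$ and $Q$ are off-diagonal entries of a product of matrices $\begin{pmatrix}b_i&1\\1&0\end{pmatrix}$ with all $b_i>0$; writing them as continuants of the positive reals $b_N,\dots,b_{N+L-1}$ shows $P'>0$ and $Q>0$, so the discriminant is at least $4QP'>0$. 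Hence, when the quadratic is irreducible, $\mathbb{Q}(\alpha_N)=\mathbb{Q}\bigl(\sqrt{(Q'-P)^2+4QP'}\bigr)$ is a real quadratic field, and $\mathbb{Q}(\alpha)=\mathbb{Q}(\alpha_N)$ embeds in $\mathbb{R}$.

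I expect the one genuinely delicate point to be the positivity step rather than any computation: everything hinges on all partial quotients in the period being strictly positive, which is why I shift the start of the period to an index $\ge 1$ where every complete quotient has negative valuation. Once positivity is secured, the positivity of the off-diagonal continuants $P',Q$ makes the discriminant automatically positive, bypassing any parity analysis of $L$ or any appeal to convergence of the associated real continued fraction. This positivity is the $\ell$-adic shadow of the classical phenomenon that purely periodic expansions correspond to reduced real quadratic irrationals.
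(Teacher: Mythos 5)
Your proposal is correct and follows essentially the same route as the paper: reduce to a purely periodic complete quotient (noting $\Q(\alpha)=\Q(\alpha_N)$ via the convergent M\"obius relation), extract the quadratic equation $Qx^2+(Q'-P)x-P'=0$ from the periodicity relation, and use strict positivity of the partial quotients in the tail to get $Q,P'>0$, hence a positive discriminant and a real embedding. The paper phrases this last step as the product of the roots $-p_{k-1}/q_k$ being negative rather than bounding the discriminant below by $4QP'$, but this is the same observation.
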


No full analogue of Lagrange's theorem holds in this setting as remarked by Ooto in \cite{Ooto17}, and the problem of deciding whether a quadratic ($\ell$-adic) irrational number has a periodic continued fraction seems still open. For Browkin's definition, some very partial sufficient conditions were given in a series of papers by Bedocchi \cite{Bedocchi93}. Moreover, in \cite{Browkin01}, Browkin wrote an algorithm to generate the periodic continued fraction expansion of $\sqrt{\Delta}\in \Q_{\ell}\setminus \Q$ for some values of $\Delta$ and $\ell$, giving many numerical examples. 

In this paper we investigate the periodicity of the $\ell$-adic Ruban continued fraction expansion of quadratic irrational numbers, thus solving a problem posed by Laohakosol in \cite{Laohakosol85}.

Our main result is the following:

\begin{thm} 
Let $\alpha\in \Q_{\ell}\setminus \Q$ be a quadratic irrational over $\Q$. Then, the Ruban continued fraction expansion of $\alpha$ is periodic if and only if there exists a unique real embedding $j:\Q(\alpha) \rightarrow \R$ such that the image of each complete quotient $\alpha_n$ under the map $j$ is positive.

Moreover, there is an effectively computable constant $N_{\alpha}$ with the property that, either $\exists~ n\leq N_\alpha$ such that $\alpha_n$ does not have a positive real embedding, and therefore the expansion is not periodic, or $\exists~ n_1<n_2\leq N_\alpha$ such that $\alpha_{n_1}=\alpha_{n_2}$, hence the expansion is periodic.

In particular, both the preperiodic and the periodic part of a periodic expansion can be computed with a finite algorithm.
\end{thm}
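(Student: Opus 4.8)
The plan is to pass from the complete quotients to the integer data of their minimal polynomials, separating the problem into an $\ell$-adic (integrality) part, which is unconditional, and an archimedean (positivity) part, which carries the real content. Write $\Q(\alpha)=\Q(\sqrt\Delta)$ with $\Delta\in\Z$ (so $\sqrt\Delta\in\Q_\ell$), and fix the embedding $\sqrt\Delta\mapsto\sqrt\Delta>0$; the second real embedding is the conjugation $x\mapsto x'$. If $\Delta<0$ there is no real embedding and Theorem~\ref{T.periodic} already forbids periodicity, so I may assume $\Delta>0$. Each complete quotient is then $\alpha_n=(P_n+\sqrt\Delta)/Q_n$ with $P_n,Q_n\in\Q$, the partial quotient $b_n=\pfloor{\alpha_n}\in[0,\ell)$ is the principal part, and the Ruban step $\alpha_{n+1}=1/(\alpha_n-b_n)$ becomes $P_{n+1}=b_nQ_n-P_n$, $Q_{n+1}=(\Delta-P_{n+1}^2)/Q_n$, with the invariant $\Delta=P_{n+1}^2+Q_nQ_{n+1}$ and the identities $\alpha_n-\alpha_n'=2\sqrt\Delta/Q_n$, $\alpha_n\alpha_n'=-Q_{n-1}/Q_n$. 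Since the Ruban step is invertible (from $\alpha_{n+1}$ one recovers $b_n=\pfloor{\alpha_n}$ because $\alpha_n-b_n=1/\alpha_{n+1}$ has $v_\ell\ge1$), periodicity is equivalent to finiteness of $\{(P_n,Q_n)\}$, so the whole theorem reduces to bounding this set effectively.

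First I would establish, unconditionally, that the pairs are eventually integral with effectively bounded onset. Since $v_\ell(\alpha_n-b_n)\ge1$, we get $v_\ell(\alpha_{n+1})\le-1$ for every $n$; feeding this into $\alpha_{n+1}'=1/(\alpha_n'-b_n)$ and using $v_\ell(b_n)=v_\ell(\alpha_n)\le-1$ shows $v_\ell(\alpha_n')\ge1$ for all $n$ past an effectively computable index $n_1$ depending only on $v_\ell(\alpha),v_\ell(\alpha')$. Then $v_\ell(Q_n)=-v_\ell(\alpha_n-\alpha_n')=-v_\ell(\alpha_n)\ge1$ (for $\ell\nmid2\Delta$; the finitely many primes dividing $2\Delta$ are absorbed into the constants), so the $Q_n$ are $\ell$-integral. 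Away from $\ell$ the recursion is division-free, since $P_{n+1}=b_nQ_n-P_n$ and $Q_{n+1}=Q_{n-1}+b_n(2P_n-b_nQ_n)$ with $b_n\in\Z[1/\ell]$ keep $P_n,Q_n$ in $\Z[1/\ell]$. Combining, $P_n,Q_n\in\Z$ for all $n\ge n_1$, and $\Delta=P_{n+1}^2+Q_nQ_{n+1}$ is an identity of integers.

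The crux is to control the real signs of $\alpha_n,\alpha_n'$, which are driven by the $\ell$-adically defined $b_n$ and thus resist classical reduction theory. Two elementary devices do the work. Sign preservation: as $b_n\ge0$, $\alpha_n<0$ forces $\alpha_{n+1}<0$, and similarly for $\alpha_n'$, so once a conjugate turns negative it stays negative. Descent on $|P_n|$: from $P_n+P_{n+1}=b_nQ_n$ with $0<b_n<\ell$ (for $n\ge1$), in any stretch where both $\alpha_n,\alpha_n'>0$ one has $\operatorname{sign}P_n=\operatorname{sign}Q_n$, $|P_n|>\sqrt\Delta$, and alternating $\operatorname{sign}Q_n$; a two-line case check on the sign of $Q_n$ then yields $|P_{n+1}|<|P_n|$. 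Hence a both-positive stretch of integral quotients is a strictly decreasing run of integers exceeding $\sqrt\Delta$: it has length $\le|P_{n_1}|$ and in particular cannot persist for all $n$, so no embedding can make \emph{both} conjugates positive forever. Symmetrically a both-negative stretch gives $|P_{n+1}|>|P_n|$, possible but never periodic. With preservation, a periodic expansion must therefore settle into the single reduced regime $\alpha_n>0>\alpha_n'$ (or its conjugate), which pins down a unique good embedding; there $\alpha_n\alpha_n'<0$ makes all $Q_n$ of one sign and $\alpha_n-\alpha_n'>0$ makes $Q_n>0$, so $0<Q_nQ_{n+1}=\Delta-P_{n+1}^2\le\Delta$ gives $1\le Q_n\le\Delta$, $|P_n|<\sqrt\Delta$, hence finiteness and periodicity. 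This is the step I expect to be the real obstacle, because positivity must be wrung from the $\ell$-adic $b_n$; the descent on $|P_n|$ is the device that replaces classical reduction.

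For the effective constant I would track, for $n\ge n_1$, the three possible phases: a both-positive phase (length $\le L:=|P_{n_1}|$ by the descent), then a reduced or conjugate-reduced phase confined to the explicit box $B=\{(P,Q)\in\Z^2:\ |P|<\sqrt\Delta,\ 1\le|Q|\le\Delta\}$ with $\#B< 4\Delta^{3/2}$, after which both conjugates are negative. Setting $N_\alpha=n_1+L+\#B+1$, an effectively computable quantity in $\Delta$ and the $\ell$-adic valuations of $\alpha,\alpha'$, one runs the algorithm to $N_\alpha$, testing at each step whether one of $(P_n\pm\sqrt\Delta)/Q_n$ is positive. If some $\alpha_n$ with $n\le N_\alpha$ has neither image positive, preservation shows no embedding is good, so the expansion is not periodic. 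Otherwise every quotient up to $N_\alpha$ has a positive image, so after at most $L$ both-positive steps the orbit lies in the box $B$; having produced more than $\#B$ such pairs without meeting a both-negative quotient forces a repetition $\alpha_{n_1'}=\alpha_{n_2'}$ with $n_1'<n_2'\le N_\alpha$, giving periodicity, and the preperiodic and periodic parts are then read off directly.
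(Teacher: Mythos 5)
Your proposal is correct and follows essentially the same route as the paper: your $(P_n,Q_n)$ recurrences are the paper's \eqref{quadratic.recurrence} for $(b_n,\ell^{f_n}c_n)$, your sign-preservation device is Lemma~\ref{remark.negativi.restano.negativi}, your descent on $|P_n|$ during a both-positive stretch is Step~A of the proof of Theorem~\ref{general_condition}, and your box count in the reduced regime $\alpha_n>0>\alpha_n'$ is Step~B, giving the same constant $N_\alpha$ up to normalization. The one point treated more lightly than in the paper is the preliminary $\ell$-adic normalization when $\ell\mid 2\Delta$ (and the case $\ell=2$), which you absorb into the constants but which the paper works out in Proposition~\ref{Prop.Part.shape}.
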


This theorem follows directly from Theorem~\ref{general_condition} and Lemma~\ref{remark.negativi.restano.negativi} in Section~\ref{S.periodic}. A suitable constant $N_{\alpha}$ is explicitly computed in the proof of Theorem~\ref{general_condition} (see also Remark~\ref{explicit_constant} for an optimized value of it).

\medskip

It is also interesting to study how the qualitative behaviour of the expansion varies with the prime $\ell$ for fixed rational or irrational quadratic numbers. We will show that finiteness of the expansion (for rational numbers) and periodicity (for irrational quadratic numbers) are ``unlikely'' behaviours which occur for at most finitely many primes.

\medskip

In Section~\ref{algorithms} we carry out a formal complexity analysis of the effective algorithms outlined in the paper.


\section{General properties of the continued fractions in \texorpdfstring{$\Q_{\ell}$}{Ql}}

For the rest of the paper, we will denote by $\ell$ a prime number, and we will consider continued fractions in $\Q_{\ell}$. By $v=v_{\ell}$, we will mean the usual $\ell$-adic valuation, and similarly for $\abs{\cdot}_{\ell}$. For any $\alpha$ algebraic over $\overline{\Q}$ we will denote by $h(\alpha)$ the absolute logarithmic height and by $H(\alpha)$ the multiplicative height (for a precise definition see \cite[Section 1.5]{BG}).

\bigskip

A \textit{simple continued fraction} is an expression (either finite or infinite) of the form
\begin{equation*}
a_0+\cfrac{1}{a_1+\cfrac{1}{a_2+\cdots }}.
\end{equation*}
The $a_i$ for $i=0,1,2 \ldots$ may be in a fixed field, and they are usually taken in $\Z$ when the field is $\R$ and in $\Q$ when the field is $\Q_{\ell}$. The terms $a_0,a_1, \ldots$ are called \textit{partial quotients}.

\medskip

The standard notation for a simple continued fraction is $[a_0,a_1, \ldots]$. If the partial quotients become periodic after some point, then a bar is placed over the repeating partial quotients, \textit{i.e.} $[a_0,\ldots, a_{n-1},\overline{a_{n},\ldots, a_{n+k-1}}]$. In this case, the minimal $k$ is called the \textit{length} of the period. We call the continued fraction \textit{purely periodic} if the periodic part starts with $a_0$.

\medskip

Starting with a real number $\alpha$, its classical continued fraction expansion is given by the following algorithm:  we define $a_0:=\lfloor \alpha \rfloor$, 
$r_0:=\alpha-a_0$, and, by recurrence, $a_{n+1}:=\lfloor 1/r_n \rfloor$ and $r_{n+1}:=r_n^{-1}-a_{n+1}$, whenever $r_n \neq 0$. If on the other hand $r_n=0$ for some $n$, then the procedure stops.
 With this definition, we have that for $i\ge 1$ the $a_i$ are all positive integers, while $a_0$ has the same sign of the starting $\alpha$. Moreover, the procedure eventually stops if and only if we start with a rational number.
This construction leads to the best rational approximations to real numbers, and produces an eventually periodic expansion for irrational quadratic numbers.
Indeed a famous theorem of Lagrange says that the simple continued fraction expansion of a real number $\alpha$ is periodic if and only if $\alpha$ is quadratic irrational.

\medskip

For a real number, the algorithm used to construct the continued fraction is well-defined, since for every $\alpha$ there is only one integer $a$ such that $0\le \alpha-a <1$. If $\alpha \in \Q_{\ell}$ instead, there are infinitely many $ a\in \Z$ such that $0\le |\alpha-a|_{\ell}<1$, and there is no canonical choice nor any obvious way of choosing $a$ so that the analogues of theorems about real continued fractions hold. Many authors (see \cite{Schneider70}, \cite{Ruban70}, \cite{Browkin78}) gave different definitions of $\ell$-adic continued fractions.
In the rest of the paper, we will focus on the definition given by Ruban \cite{Ruban70}. We will refer to Ruban's definition using the abbreviation RCF.

\bigskip
Notice that, in oreder to have a simple continued fraction we need to take $a\in \Z\left[\frac{1}{\ell}\right]$ with $v(a)=v(\alpha)$.
Following Ruban, we give the following definition:
\begin{definition} \label{integral_part}
  The $\ell$-adic integral part $\lfloor \alpha\rfloor_{\ell}$  of $\alpha\in\Q_{\ell}$  is the unique  
  $a\in\Z\left[\frac{1}{\ell}\right]$ such that $0\le a <\ell$ and $|\alpha-a|_{\ell}< 1$. 
  \end{definition}

  Given this definition, we may expand elements in $\Q_{\ell}$  in a continued fraction in the usual way: put $\alpha_0:=\alpha$, $a_0:=\lfloor\alpha_0\rfloor_{\ell}$ and $r_0:=\alpha_0-a_0$; by definition, $|r_0|_{\ell}<1$. Then for all $n\geq 0$ define by recurrence:
    \begin{equation*} \label{recurrence}
    \alpha_{n+1}:=1/r_n, \quad a_{n+1}:=\lfloor\alpha_{n+1}\rfloor_{\ell} \quad \mbox{and} \quad r_{n+1}:=\alpha_{n+1}-a_{n+1},
    \end{equation*}
whenever $r_n\neq 0$; if on the other hand $r_n=0$ for some $n$, then the procedure stops. Thus a given number $\alpha\in\Q_\ell$ uniquely defines a RCF expansion. We also define $e_n:=-v(\alpha_n)$ for $n\geq 0$.  
\medskip

Notice that, if $|\alpha|_{\ell}<1$, then $a_0=0$ and $\alpha_1=\frac{1}{\alpha}$ with $|\alpha_1|_{\ell}>1$; so, shifting $n$ by one if necessary, we can always assume that $|\alpha|_{\ell}\ge 1$, so that $a_0\neq 0$.

The partial quotients $a_n$ so obtained satisfy, for $n>0$,
    \begin{equation*} \label{a.conditions}
    |a_n|_{\ell}={\ell}^{e_n}, \qquad e_n>0, \qquad  0<a_n\le \ell-\ell^{-e_n}<\ell .
    \end{equation*}

  As in the classical real case, we can define two sequences $(p_n)$ and $(q_n)$ by setting\footnote{Several authors use shifted indices, so that their $p_n$ is our $p_{n+1}$.} 
    \begin{equation} \label{E.convergents}
    \begin{cases}
    p_n,q_n=0  \ \ &\forall\, n< 0; \\
    p_0=1, \quad p_1=a_0, \\
    p_{n+1}=a_np_n+p_{n-1} \  &\forall\, n\ge 1; \\
    q_0=0, \quad q_1=1, \\ 
    q_{n+1}=a_nq_n+q_{n-1} \ &\forall\, n\ge 1.
    \end{cases}
    \end{equation}
  Note that the $p_n,q_n$ are rational numbers whose reduced denominator is a power of $\ell$. They are non-negative and they satisfy the usual formula
   \begin{equation} \label{E.convergents2}
   p_nq_{n-1}-p_{n-1}q_n=(-1)^n.
   \end{equation}
   
  We call  $(p_n/q_n)$  the  \textit{convergents} of the continued fraction; for $n>0$, we have
   \begin{equation*} 
   \frac{p_n}{q_n}=[a_0,a_1, \ldots, a_{n-1}].
   \end{equation*}
   
  The $\alpha_n$ which appear in the algorithm are called \textit{complete quotients} and they satisfy, as in the classical case, several relations with the $p_{n}$ and $q_n$ (see \cite{HW} for a classical account). 
  In particular we have, for $n>0$,
   \begin{align} 
 \label{E.cq}  \alpha&=\frac{\alpha_n p_n+p_{n-1}}{\alpha_n q_n+q_{n-1}},\\
\label{E.approx} \varphi_n&:= p_n-\alpha q_n={\frac{(-1)^{n}}{\alpha_nq_n+q_{n-1}}}.
  \end{align} 
   

\subsection{Convergence in \texorpdfstring{$\Q_\ell$}{Ql}}
Notice that, as $|a_i|_{\ell}\ge \ell >1$ for $i\le 1$, the continued fraction converges in $\Q_{\ell}$. In fact, if $v(a_n)=-e_n$, we have by induction that
   \begin{equation*}\label{orders}
   -v(p_n)=e_0+\ldots +e_{n-1},\qquad -v(q_n)=e_1+\ldots +e_{n-1},
   \end{equation*}
for $n\ge 1$. Then, equation~\eqref{E.convergents2}  gives $v((p_n/q_n)-(p_{n-1}/q_{n-1}))= -v(q_n)-v(q_{n-1})\ge 2n-3$, proving in particular that $(p_n/q_n)_{n\in\N}$ is a Cauchy sequence in $\Q$ with respect to the $\ell$-adic metric.

Note that the limit, when expanded 
 in a RCF,  gives back  the same continued fraction, since the definition of {\it $\ell$-adic integral part}, as given above, is well-posed.
 
 We shall denote by $[a_0,\dotsc]_{\ell}$ the value of a continued fraction in $\Q_{\ell}$ defined as the limit, whose existence has just been proved, in the $\ell$-adic metric. Moreover, we have  $\alpha_n=[a_n,a_{n+1},\dotsc]_\ell$ for all $n\ge 0$.
 
\medskip

As a matter of notation, we shall put 
\begin{equation*}
s_{n}:=e_0+ \cdots+ e_{n-1}\qquad\hbox{for every $n\ge 1$}. 
\end{equation*}
Notice that $s_n\ge n-1$, as $e_i\ge 1$ for all $i\ge 1$. In this way, we have $v(p_n)=-s_n$ and $v(q_n)=e_0-s_n$.

\bigskip

From the construction of the RCF expansion we have $v(\alpha_n)=v(a_n)$ and, inserting in~\eqref{E.approx}, we find 
   \begin{equation}\label{E.approx2} 
   v(p_n-\alpha q_n)=-v(q_{n+1})=s_{n+1}-e_0\ge n,
   \end{equation}
   unless the continued fraction stops and $p_n=\alpha q_n$.

  Inverting~\eqref{E.cq} we also find
  \begin{equation} \label{E.cq2}
   \alpha_n=-\frac{\varphi_{n-1}}{\varphi_n}=\frac{p_{n-1}-\alpha q_{n-1}}{\alpha q_n-p_n}\, .
   \end{equation}

We will also define, for $n\ge 1$,
\begin{equation*}\label{E.tilde}
\tilde p_n:=\ell^{s_n}p_n \quad \mbox{and} \quad \tilde q_n:=\ell^{s_n}q_n,
\end{equation*}
so that $\tilde p_n,\tilde q_n$ are positive integers such that $v(\tilde p_n)=0, v(\tilde q_n)=e_0$ and $p_n/q_n=\tilde p_n/\tilde q_n$. From~\eqref{E.convergents}, we have the following recurrence formulae for $\tilde p_n$ and $\tilde q_n$:

\begin{equation} \label{tilde.recurrence}
\begin{aligned}
    &\begin{cases}
    \tilde p_n=0 &\forall n< 0, \\
    \tilde p_0=1,\quad \tilde p_1=\ell^{e_0}a_0, \\
    \tilde p_{n+1}=\ell^{e_n}(a_n \tilde p_n+\ell^{e_{n-1}} \tilde p_{n-1})  &\forall n\ge 1;
    \end{cases}\\
     &\begin{cases}
    \tilde q_n=0 &\forall n<0, \\
    \tilde q_0=0,  \quad \tilde q_1=\ell^{e_0}, \\
    \tilde q_{n+1}=\ell^{e_n}(a_n \tilde q_n+ \ell^{e_{n-1}} \tilde q_{n-1}) &\forall n\ge 1.
    \end{cases}
\end{aligned}
\end{equation}

The following Lemma, which can be proved by an easy induction using~\eqref{tilde.recurrence}, gives an estimate on the growth of the $\tilde p_n$ and $\tilde q_n$. 

\begin{lem}\label{lemma.lowerbound}
If $a_0\neq 0$, then for every $n>1$, we have
\begin{align*}
    \tilde{p}_{n}&>\ell^{s_n}\geq \ell^{n-1}  &\text{and} \quad  &\tilde{q}_{n}\geq\ell^{s_{n-1}}\geq \ell^{n-2}   &\text{if $n$ is even}\\
     \tilde{p}_{n}&>\ell^{s_{n-1}}\geq \ell^{n-2}  &\text{and} \quad  &\tilde{q}_{n}\geq\ell^{s_{n}}\geq \ell^{n-1}  &\text{if $n$ is odd}.
\end{align*}
\end{lem}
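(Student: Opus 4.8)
The plan is to prove all four inequalities simultaneously by a two-step induction on $n$, feeding the recurrences~\eqref{tilde.recurrence} into themselves. The one arithmetic input I need first is that $c_n := \ell^{e_n} a_n$ is a positive integer for every $n \ge 0$: for $n = 0$ this is $\tilde p_1 = \ell^{e_0} a_0$, which is a positive integer precisely because $a_0 \ne 0$ and $v(a_0) = -e_0$; for $n > 0$ it follows from $v(a_n) = -e_n$ together with $0 < a_n$, which forces $\ell^{e_n} a_n$ to be a positive integer, hence $c_n \ge 1$. Rewriting the recurrences as $\tilde p_{n+1} = c_n \tilde p_n + \ell^{e_n + e_{n-1}} \tilde p_{n-1}$ and $\tilde q_{n+1} = c_n \tilde q_n + \ell^{e_n + e_{n-1}} \tilde q_{n-1}$ then exhibits each new term as a sum of two positive quantities, so I may bound it below by either summand.

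The mechanism behind the parity alternation is exactly this choice of summand. For the numerators, the strong bound $\tilde p_n > \ell^{s_n}$ is claimed on even indices and the weaker $\tilde p_n > \ell^{s_{n-1}}$ on odd ones; when passing from an even $n$ to an odd $n+1$ it suffices to keep the single-step term, $\tilde p_{n+1} \ge c_n \tilde p_n \ge \tilde p_n > \ell^{s_n}$, whereas passing from an odd $n$ to an even $n+1$ I keep the two-step term and gain a full factor $\ell^{e_n + e_{n-1}}$: using $\tilde p_{n-1} > \ell^{s_{n-1}}$ and the telescoping identity $s_{n-1} + e_{n-1} + e_n = s_{n+1}$ gives $\tilde p_{n+1} > \ell^{s_{n+1}}$. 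The denominators behave in the mirror-image way, with the strong bound living on odd indices, and the same two estimates apply with the roles of the two cases swapped.

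For the base of the induction I compute $n = 2$ directly from~\eqref{tilde.recurrence}: $\tilde p_2 = \ell^{s_2}(a_0 a_1 + 1) > \ell^{s_2}$ and $\tilde q_2 = \ell^{e_0}(\ell^{e_1} a_1) \ge \ell^{e_0} = \ell^{s_1}$, and I record the value $\tilde q_1 = \ell^{e_0} = \ell^{s_1}$, which is what the odd-index denominator bound needs at the first step. The induction then runs as above, and the final explicit bounds $\ell^{s_n} \ge \ell^{n-1}$ and $\ell^{s_{n-1}} \ge \ell^{n-2}$ are immediate from $s_n \ge n-1$. I expect no genuine difficulty here; the only thing to keep straight is the bookkeeping of which index carries the strict bound at each parity and the correct matching of exponents through $s_{n-1} + e_{n-1} + e_n = s_{n+1}$, both of which are routine.
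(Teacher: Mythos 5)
Your proof is correct and follows exactly the route the paper indicates (the paper gives no written proof, only the remark that the lemma ``can be proved by an easy induction using~\eqref{tilde.recurrence}''). Your two-step induction, keeping the one-step term $c_n\tilde p_n$ when passing to an odd index and the two-step term $\ell^{e_n+e_{n-1}}\tilde p_{n-1}$ when passing to an even index (and the mirror image for the $\tilde q_n$), together with the observation that $\ell^{e_n}a_n$ is a positive integer, is precisely that easy induction, and the base cases and exponent bookkeeping all check out.
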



\subsection{On the convergence in \texorpdfstring{$\R$}{R}} \label{SS.r}

Given a RCF with partial quotients $a_0, a_1, \ldots$, we want to analyse the convergence of the continued fraction $[a_0, a_1,\ldots]$ in $\R$ (though not being \textit{the} continued fraction expansion of any real number). 

From~\eqref{E.convergents2} we have that, for every $n\ge 1$, 
   \begin{equation} \label{difference_pq}
   \frac{p_n}{q_n}-\frac{p_{n-1}}{q_{n-1}}=\frac{(-1)^n}{q_n q_{n-1}}.
   \end{equation}
   Since $q_{n+2}\ge q_n$ for every $n\ge 0$ (as follows from the recurrence formulae), this easily  implies  in particular that the sequence $\left (p_n/q_n\right )$ is increasing for odd $n$ and decreasing for even $n$, so these two corresponding subsequences have limit in $\R$ equal to their supremum, for $n$ odd, and their infimum, for $n$ even, respectively. 
   
   Also, the same equation yields that every $p_n/q_n$ with odd $n$ is smaller than any $p_m/q_m$ with even $m\ge 0$  (where we agree that $p_0/q_0=1/0=+\infty$).

We have the following general result:

\begin{prop} \label{P.convergence} A continued fraction with real partial quotients $a_0, a_1,\ldots$ such that $a_i>0$ for $i>0$ converges in $\R$ if and only if $\sum a_i=\infty$, which holds if and only if the $q_n$ are unbounded.
\end{prop}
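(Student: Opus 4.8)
The plan is to reduce everything to the classical alternating-series behaviour of convergents that was already established just above the statement. Recall from \eqref{difference_pq} that $\frac{p_n}{q_n}-\frac{p_{n-1}}{q_{n-1}}=\frac{(-1)^n}{q_nq_{n-1}}$, and that the even-indexed convergents form a decreasing sequence while the odd-indexed ones form an increasing sequence, with every odd convergent below every even convergent. Hence both subsequences converge in $\R$, say to limits $L_{\mathrm{odd}}$ and $L_{\mathrm{even}}$ with $L_{\mathrm{odd}}\le L_{\mathrm{even}}$, and the full sequence $(p_n/q_n)$ converges if and only if $L_{\mathrm{odd}}=L_{\mathrm{even}}$, i.e.\ if and only if the consecutive differences $\frac{1}{q_nq_{n-1}}$ tend to $0$. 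So the first equivalence I want is:
\begin{equation*}
(p_n/q_n)\text{ converges in }\R\iff q_nq_{n-1}\to\infty\iff (q_n)\text{ is unbounded.}
\end{equation*}
The last step here uses that $(q_n)$ is nondecreasing along each parity class and positive for $n\ge 1$, so boundedness of the product $q_nq_{n-1}$ is equivalent to boundedness of $(q_n)$ itself.

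Next I would tie the unboundedness of $(q_n)$ to the divergence of $\sum a_i$. The recurrence $q_{n+1}=a_nq_n+q_{n-1}$ with all $a_i>0$ (for $i>0$) and $q_0=0,q_1=1$ gives, on the one hand, the lower bound $q_{n+1}\ge a_nq_n\ge a_n$, and more usefully the telescoping-type estimate $q_{n+1}\ge q_{n-1}+a_nq_n\ge q_{n-1}(1+a_n)$ together with $q_{n+1}\ge a_n$; iterating, one sees that if $\sum a_i=\infty$ then $(q_n)$ cannot stay bounded. For the converse direction I would argue contrapositively: if $\sum a_i<\infty$ then in particular $a_n\to 0$, and I want to conclude $(q_n)$ is bounded. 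The cleanest route is a product bound: from $q_{n+1}=a_nq_n+q_{n-1}$ one gets $q_{n+1}\le (1+a_n)\max(q_n,q_{n-1})$, and iterating yields $q_n\le \prod_{i=1}^{n-1}(1+a_i)$, which converges to a finite limit precisely because $\sum a_i<\infty$ (the infinite product $\prod(1+a_i)$ converges iff $\sum a_i$ does). This simultaneously shows $\sum a_i=\infty\Rightarrow(q_n)$ unbounded, by taking logarithms and using $\log(1+a_i)\asymp a_i$ for small $a_i$.

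Assembling the pieces gives the three-way equivalence: convergence in $\R$ $\iff$ $(q_n)$ unbounded $\iff$ $\sum a_i=\infty$. The main obstacle I anticipate is getting the product estimate $q_n\le\prod(1+a_i)$ sharp enough and in the right direction simultaneously — one must be careful that the $a_i$ need not be bounded away from $0$ or $\infty$ (they are arbitrary positive reals here, not the integer partial quotients of a genuine expansion), so the standard comparison $\log(1+x)\le x$ for the upper bound and $\log(1+x)\ge cx$ for $x$ in a bounded range for the lower bound must be invoked only where legitimate. For the divergence direction this is harmless since $\sum a_i=\infty$ already forces $\prod(1+a_i)=\infty$ regardless of the size of individual terms; for the convergence direction the tail has $a_i\to 0$, so the comparison applies on the tail and the finitely many large initial terms only contribute a bounded factor. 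Once this convergence of the infinite product is handled carefully, the rest is a direct combination of the monotonicity already recorded before the statement.
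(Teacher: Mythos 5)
Your overall architecture (route everything through the unboundedness of $(q_n)$, use the alternating behaviour of the convergents for the topological part, and a product bound $q_n\le\prod_{i<n}(1+a_i)$ for the converse) matches the paper's proof, and three of the four implications are fine: convergence $\Leftrightarrow q_nq_{n-1}\to\infty\Leftrightarrow (q_n)$ unbounded (using the uniform lower bound $q_n\ge\min(1,a_1)>0$ coming from monotonicity along parity classes), and $\sum a_i<\infty\Rightarrow (q_n)$ bounded via $q_{n+1}\le(1+a_n)\max(q_n,q_{n-1})$.

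There is, however, a genuine gap in the remaining direction, $\sum a_i=\infty\Rightarrow(q_n)$ unbounded. The inequality you lean on, $q_{n+1}=q_{n-1}+a_nq_n\ge q_{n-1}(1+a_n)$, requires $q_n\ge q_{n-1}$, which fails for general positive real partial quotients: the $q_n$ are only monotone along each parity class, not globally. Concretely, with $a_1=a_2=\tfrac13$ one gets $q_1=1$, $q_2=\tfrac13$, $q_3=\tfrac{10}{9}$, while $q_1(1+a_2)=\tfrac{12}{9}>q_3$. The fallback $q_{n+1}\ge a_n$ (which itself needs the correction $q_{n+1}\ge a_n\min(1,a_1)$) only helps when the $a_n$ are unbounded, so it cannot handle e.g.\ $a_i\equiv 1$. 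Finally, the remark that the upper bound $q_n\le\prod(1+a_i)$ ``simultaneously shows'' unboundedness when $\sum a_i=\infty$ is logically inverted: an upper bound tending to infinity gives no lower bound. The gap is repairable along the lines the paper uses: from $q_{n+1}-q_{n-1}=a_nq_n\ge \min(1,a_1)\,a_n$ one telescopes along each parity class to get $q_{2m}+q_{2m+1}\gg\sum_{i\le 2m}a_i$ (the paper phrases this via $h_{n+1}=q_{n+1}q_n=a_nq_n^2+h_{n-1}$ and the same uniform lower bound on the $q_n$), which yields unboundedness and hence convergence. As written, though, this implication is not established.
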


This fact appears in more general form in  \cite{H}, but we can give a very simple and short proof.

\begin{proof}  For $n\ge 1$, we let for this proof $h_n:=q_nq_{n-1}$. 
The formula~\eqref{difference_pq} implies that there is convergence in $\R$ as soon as $\limsup h_n=+\infty$.  In fact, if this happens, it follows that $p_n/q_n- p_{n-1}/q_{n-1}\rightarrow 0$, and therefore the limit of the odd convergents is equal to the limit of the even ones. 

 The usual recurrence formulae imply that  $h_{n+1}=a_nq_n^2+h_{n-1}$, whence $h_n=a_nq_n^2+a_{n-1}q_{n-1}^2+\ldots +a_2q_2^2+h_1$. As $q_1=1$, $q_2=a_1>0$ and, for every $n\ge 3$, $q_n > q_{n-2}$, we have that for $n\ge 1$ the $q_n$ are bounded from below by a strictly positive number. This yields $h_n\gg \sum_{m\le n}a_m$, thus proving convergence in $\R$ of the continued fraction whenever $\sum a_i$ diverges. Since $h_n\to +\infty$ in this case, the $q_n$ are unbounded. 

For the converse, it suffices to note that $q_n\le (a_{n-1}+1)\cdots (a_1+1)$, as follows from an easy induction.\footnote{See Proposition~\ref{P.qnbound} below for a more accurate estimate.} But then $q_n\le \exp (\sum_{m<n}a_m)$, so the $q_n$ are bounded  if $\sum a_i$ converges. Formula~\eqref{difference_pq} then implies that the continued fraction cannot converge.
\end{proof}

Note that we certainly have convergence in $\R$ if $\underline a:= \inf a_n>0$ (which happens in particular when the $e_n$ are bounded). 

Actually, in this case, the growth of the $q_n$ and the convergence occur with (at least) exponential rate; indeed, it is easy to prove that $p_n,q_n \gg c^n$, where $c$ is the positive root of the equation $x^2- \underline a x-1$; in turn, this root is checked to be $\ge 1+\underline a/2$. 


\subsection{Upper bounds for \texorpdfstring{$\alpha_n,p_n,q_n$}{an,pn,qn}} \label{Bound.pq}
In this short subsection we collect some simple inequalities which do not seem to be easily located in the literature, despite their probable usefulness.

First we give an upper bound for the sequences of the $p_n,q_n$.For this it is not necessary to restrict to the present context, and we consider an arbitrary continued fraction with real positive entries. For $a\in\R$, $a>  0$, we let  

\[ 
B(a):=
\begin{pmatrix}
a&   1\\
1 & 0 
\end{pmatrix}, \qquad \p_n:=\begin{pmatrix}p_n\\p_{n-1}\end{pmatrix},  \qquad  \q_n:=\begin{pmatrix}q_n\\q_{n-1}\end{pmatrix}.
\]
Also, let $\lambda(a)$ denote the maximum eigenvalue of $B(a)$. Of course, $\lambda(a)$ is the positive root of the equation $x^2-ax-1=0$, \textit{i.e.} 
$$
\lambda(a)=\frac{a+\sqrt{a^2+4}}{2},
$$
with the positive square root. We note that the other eigenvalue of $B(a)$ is in $(-1,0)$, whereas $\lambda(a)\geq 1+\frac{a}{2}$.

\begin{prop}\label{P.qnbound} For all $n> 1$ we have
$$
 p_n\le  \lambda(a_{n-1})\cdots \lambda(a_0) \quad \mbox{and} \quad q_n\le  \lambda(a_{n-1})\cdots \lambda(a_1).
$$
\end{prop}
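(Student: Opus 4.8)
The plan is to avoid a direct induction on the recurrence \eqref{E.convergents} and instead to read $p_n$ and $q_n$ as quadratic-form entries of products of the matrices $B(a_i)$, after which submultiplicativity of the Euclidean operator norm will do all the work in one stroke. Concretely, the recurrence $p_{n+1}=a_np_n+p_{n-1}$ is precisely $\p_{n+1}=B(a_n)\p_n$, and since $\p_1=(a_0,1)^{\mathsf T}=B(a_0)\,\mathbf e$ with $\mathbf e:=(1,0)^{\mathsf T}$, an immediate induction gives $\p_n=B(a_{n-1})\cdots B(a_0)\,\mathbf e$. In the same way $\q_1=(1,0)^{\mathsf T}=\mathbf e$, so $\q_n=B(a_{n-1})\cdots B(a_1)\,\mathbf e$. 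Hence $p_n$ is the top entry of $B(a_{n-1})\cdots B(a_0)\mathbf e$, that is $p_n=\mathbf e^{\mathsf T}B(a_{n-1})\cdots B(a_0)\mathbf e$, and likewise $q_n=\mathbf e^{\mathsf T}B(a_{n-1})\cdots B(a_1)\mathbf e$.

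The second and crucial step is to observe that each $B(a)$ is a real \emph{symmetric} matrix, so its Euclidean operator norm equals the largest absolute value of its eigenvalues. As already recorded just before the statement, the eigenvalues of $B(a)$ are $\lambda(a)>1$ and a number in $(-1,0)$; therefore $\|B(a)\|=\lambda(a)$ exactly.

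The conclusion then follows by submultiplicativity of the operator norm: $\|B(a_{n-1})\cdots B(a_0)\|\le\lambda(a_{n-1})\cdots\lambda(a_0)$, and since $\|\mathbf e\|=1$, the Cauchy--Schwarz inequality yields $p_n=\mathbf e^{\mathsf T}B(a_{n-1})\cdots B(a_0)\mathbf e\le\|B(a_{n-1})\cdots B(a_0)\|\le\lambda(a_{n-1})\cdots\lambda(a_0)$. The bound for $q_n$ is identical, with the product starting at $a_1$, and both bounds hold for every $n\ge 1$.

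The only real obstacle here is conceptual rather than computational. A term-by-term induction matching each $B(a_k)$ against the factor $\lambda(a_k)$ breaks down: carrying it out forces the condition $\lambda(a_{n-1})\ge\lambda(a_n)$, i.e. the partial quotients non-increasing, which one cannot assume. The decisive point is that the symmetry of $B(a)$ upgrades its spectral radius to an honest operator norm, so that submultiplicativity bypasses the induction entirely; everything beyond this observation is routine.
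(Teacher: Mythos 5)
Your proof is correct and is essentially the paper's own argument: both rest on the observation that each $B(a)$ is symmetric with largest eigenvalue (in modulus) $\lambda(a)$, so that $|B(a)\x|\le\lambda(a)|\x|$, and then iterate this through the matrix representation of $\p_n$ and $\q_n$; your phrasing via operator norms and submultiplicativity is just a repackaging of the paper's iterated inequality, and extracting the top entry with $\mathbf e^{\mathsf T}(\cdot)$ and Cauchy--Schwarz matches the paper's bound $p_n\le|\p_n|$.
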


\begin{proof} Let us discuss first the case of the $q_n$.
We have $\q_{m+1}=B(a_m)\q_m$ for all integers $m\ge 1$, whence we derive the well-known matrix representation
$$
\q_{m+1}=B(a_m)\cdots B(a_1)\q_1.
$$
Note that the $B(a)$ are symmetric matrices, and that, since $a\in\R^+$ and $\lambda(a)$ is  the maximum eigenvalue (also in modulus) of $B(a)$, we have for every $\x\in\R^2$ the well-known (easy)  inequality
$$
|B(a)\x|\le \lambda(a)|\x|,
$$
where $|\x|$ denotes the euclidean length. On iterating this and recalling that $q_1=1$ and $q_0=0$, we have:
\begin{equation*}
q_n\le \sqrt{q_n^2+q_{n-1}^2}= |\q_{n}|\le \lambda(a_{n-1})\cdots \lambda(a_1)|\q_1|= \lambda(a_{n-1})\cdots \lambda(a_1),
\end{equation*}
which concludes the argument. 

To prove the estimate for the $p_n$, one argues similarly, using the matrix representation
$$
\p_{m+1}=B(a_m)\cdots B(a_0)\p_0,
$$
and recalling that $p_0=1$ and $p_{-1}=0$ by definition.
\end{proof}

Now we give an upper bound for the height of the complete quotients $\alpha_n$. This holds in the context of Ruban continued fractions.
\begin{prop}\label{prop.bound.altezza.quozienti.completi}
Let $\alpha\in \Q_\ell$ algebraic over $\Q$ with $v(\alpha)\leq 0$. Then for every $n\ge 0$ the following bounds for the height of the complete quotients of the RCF expansion of $\alpha$ hold:
\begin{align}
    \label{height_cq2} h(\alpha_n)&\leq  h(\alpha)+s_n \log \ell + n\log(2\ell), \\
    \label{height_cq} h(\alpha_n)&\leq 2^n(h(\alpha)+\log(2\ell))-\log(2\ell).
\end{align}
\end{prop}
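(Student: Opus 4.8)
The plan is to prove both bounds by tracking how the logarithmic height transforms under the single step $\alpha_{n+1} = 1/r_n = 1/(\alpha_n - a_n)$, where $a_n = \lfloor \alpha_n \rfloor_\ell$. First I would recall the basic inequalities for the absolute logarithmic height: for any algebraic numbers $\beta, \gamma$ we have $h(\beta\gamma) \le h(\beta) + h(\gamma)$, $h(\beta+\gamma) \le h(\beta) + h(\gamma) + \log 2$, and $h(1/\beta) = h(\beta)$. The key observation is that the partial quotient $a_n$ is a rational number in $\Z[1/\ell]$ with $0 < a_n \le \ell - \ell^{-e_n} < \ell$ whose only denominator is $\ell^{e_n}$; hence its height is controlled purely in terms of $\ell$ and $e_n$. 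Concretely, $a_n = b_n/\ell^{e_n}$ with $0 < b_n < \ell^{e_n+1}$, so $h(a_n) \le \log(\ell^{e_n+1}) = (e_n+1)\log\ell$ — though a cleaner estimate, using that $a_n < \ell$ and the denominator is $\ell^{e_n}$, gives $h(a_n) \le e_n \log\ell + \log\ell = (e_n+1)\log\ell$. I would extract the precise constant from the constraint $|a_n|_\ell = \ell^{e_n}$ together with $a_n < \ell$.

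For the first bound \eqref{height_cq2}, I would iterate the recursion for the height directly rather than telescoping a multiplicative factor. Starting from $h(\alpha_{n+1}) = h(1/(\alpha_n - a_n)) = h(\alpha_n - a_n) \le h(\alpha_n) + h(a_n) + \log 2$, and inserting $h(a_n) \le (e_n+1)\log\ell = e_n\log\ell + \log\ell$, I would obtain
\[
h(\alpha_{n+1}) \le h(\alpha_n) + e_n \log\ell + \log(2\ell).
\]
Summing this telescoping inequality from $0$ to $n-1$, and using $s_n = e_0 + \cdots + e_{n-1}$ together with $h(\alpha_0) = h(\alpha)$, yields exactly $h(\alpha_n) \le h(\alpha) + s_n\log\ell + n\log(2\ell)$, which is \eqref{height_cq2}. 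The only care needed here is to confirm that $h(a_0)$ also respects the stated bound when $a_0 \ne 0$ (the hypothesis $v(\alpha) \le 0$ guarantees $a_0 \ne 0$ and $e_0 \ge 0$), and that the $\log 2$ from the subtraction combines with $\log\ell$ into the single $\log(2\ell)$ per step.

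For the second bound \eqref{height_cq}, the issue is that $s_n$ is not a priori bounded in terms of $n$ alone, so \eqref{height_cq2} is not self-contained. Instead I would set up a purely geometric (doubling) recursion. Using $e_n = -v(\alpha_n)$ and the fact that the $\ell$-adic valuation of $\alpha_n$ is bounded below by $-$(something controlled by the height), I would bound $e_n \log\ell$ by $h(\alpha_n) + \log\ell$ or similar. The clean route is: since $e_n\log\ell \le h(a_n) \le h(\alpha_n) + \text{(small)}$ — because $a_n$ is the integral part of $\alpha_n$ and its $\ell$-adic size is governed by $\alpha_n$ — one gets $h(\alpha_{n+1}) \le h(\alpha_n) + e_n\log\ell + \log(2\ell) \le 2h(\alpha_n) + \log(2\ell)$. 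This linear recursion $h(\alpha_{n+1}) + \log(2\ell) \le 2(h(\alpha_n) + \log(2\ell))$ unwinds immediately to $h(\alpha_n) + \log(2\ell) \le 2^n(h(\alpha_0) + \log(2\ell))$, which rearranges to \eqref{height_cq}.

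The main obstacle I anticipate is establishing the bound $e_n \log\ell \lesssim h(\alpha_n)$ needed for the doubling step in \eqref{height_cq}. This requires relating the $\ell$-adic valuation $e_n = -v(\alpha_n)$ to the global height $h(\alpha_n)$. Since $\alpha_n$ is a quadratic (or rational) algebraic number, the product formula bounds any single local contribution by the total height: the place-$\ell$ contribution $\frac{1}{[\Q(\alpha_n):\Q]}\sum_{w \mid \ell} [\,\cdot\,]\max(0, -v_w(\alpha_n))\log\ell$ is at most $h(\alpha_n)$. Because $v(\alpha_n) = -e_n$ with $e_n > 0$, this forces $e_n \log\ell$ to be dominated by $h(\alpha_n)$ up to the normalization factor from the local degree, and I would need to handle that degree factor carefully (it is $1$ or $2$ in our quadratic setting, and the splitting behavior of $\ell$ must be accounted for). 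Once this local-to-global estimate is in hand, both recursions close cleanly and the two displayed bounds follow by the elementary summation/iteration described above.
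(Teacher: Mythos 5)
Your argument for \eqref{height_cq2} is correct and is exactly the paper's: the one--step estimate $h(\alpha_{n+1})=h(\alpha_n-a_n)\le h(\alpha_n)+h(a_n)+\log 2$, combined with $h(a_n)\le(e_n+1)\log\ell$ (since $0<a_n<\ell$ with denominator $\ell^{e_n}$), telescopes to the first bound.

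The proof of \eqref{height_cq} is not closed, and the gap sits exactly where you flag it. The doubling step needs $e_n\log\ell\le h(\alpha_n)$, but the tool you invoke\,---\,bounding the height from below by the contribution of the single place $w$ of $K=\Q(\alpha_n)$ induced by the embedding into $\Q_\ell$\,---\,yields only $\tfrac{[K_w:\Q_\ell]}{[K:\Q]}e_n\log\ell\le h(\alpha_n)$ with $K_w=\Q_\ell$, i.e.\ $e_n\log\ell\le[\Q(\alpha_n):\Q]\,h(\alpha_n)$. For quadratic $\alpha_n$ this gives $h(\alpha_{n+1})\le 3h(\alpha_n)+\log(2\ell)$ and hence a $3^n$ bound, not the stated $2^n$; ``handling the degree factor carefully'' is precisely the missing step, and it cannot be carried out along the lines you propose, because the unnormalized inequality $h(\alpha_n)\ge e_n\log\ell$ actually fails for irrational complete quotients. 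For example, with $\ell=3$, $\delta\in\Q_3$ the square root of $10$ with $\delta\equiv 1\pmod 3$, and $\alpha=(2+\delta)/3$, one computes $\alpha_1=(1+\delta)/3$, whose minimal polynomial is $3x^2-2x-3$, so that $H(\alpha_1)^2=1+\sqrt{10}<9=\ell^{2e_1}$, i.e.\ $h(\alpha_1)<e_1\log\ell$. Only for rational $\alpha_n$, where the reduced denominator is divisible by $\ell^{e_n}$, is the inequality immediate and your recursion closes with the constant $2$. To be fair, the paper's own proof asserts $h(\alpha_n)\ge e_n\log\ell$ ``by definition'' with no degree normalization, so you have identified a genuine soft spot of the published argument rather than overlooked something it supplies; but as a proof of \eqref{height_cq} for general algebraic $\alpha$, your proposal is incomplete at this step.
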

\begin{proof}
By the recurrence formula
\[
\alpha_n=a_n+\frac{1}{\alpha_{n+1}},
\]
we obtain that
\begin{equation}\label{eqn.bound.altezza.quozienti.completi}
h(\alpha_{n+1})=h\left(\frac{1}{\alpha_{n+1}}\right)=h(\alpha_n-a_n)\leq h(\alpha_n)+h(a_n)+\log 2.
\end{equation}
The height of $a_n$ can be easily estimated in terms of $e_n$, because $a_n$ is a positive rational number smaller that $\ell$ and with denominator $\ell^{e_n}$, so that $h(a_n)\leq \log(\ell^{e_n+1}-1)<(e_n+1)\log\ell$. 
The bound~\eqref{height_cq2} now follows by induction from~\eqref{eqn.bound.altezza.quozienti.completi}.

\medskip

On the other hand we have by definition $e_n=-v(\alpha_n)>0$, so that $h(\alpha_n)\geq e_n \log \ell$. Together with~\eqref{eqn.bound.altezza.quozienti.completi} this gives
\[
h(\alpha_{n+1})\leq  h(\alpha_n)+h(a_n)+\log 2\leq h(\alpha_n)+(e_n+1)\log\ell+\log 2\leq 2 h(\alpha_n)+\log(2\ell),
\]
and the second bound follows again by induction on $n$.
\end{proof}


\subsection{Examples}
Let us calculate the RCF expansions of some numbers in $\Q_3$. 
\begin{example}
Take $\alpha=\frac{17}{11}=1+3+3^3+ \cdots$; then, $a_0=1$ and $r_0=\frac{17}{11}-1=\frac{6}{11}$. If we expand $\frac{11}{6}$, we have that $\left \lfloor\frac{11}{6}\right \rfloor_{3}=\frac{1}{3}$ and $r_1=\frac{11}{6}-\frac{1}{3}=\frac{3}{2}$. This means that $\frac{1}{r_1}$ is equal to its integral part, so $a_2=\frac{2}{3}$, $r_2=0$ and the algorithm stops giving the finite expansion
\[
\frac{17}{11}=\left [1, \frac{1}{3}, \frac{2}{3}\right ]_3.
\]
\end{example}
\begin{example}\label{ex2}
Take $\alpha=\frac{5}{6}=3^{-1}+2+2\cdot 3+ \cdots.$ Then, $a_0=\frac{7}{3}$ and $r_0=\frac{5}{6}-\frac{7}{3}=-\frac{3}{2}$.  Going on, we have
\begin{align*}
a_1=\left \lfloor-\frac{2}{3} \right\rfloor_{3}=\frac{7}{3}, \ \ \ &  r_1={-\frac{2}{3}-\frac{7}{3}}=-{3}, \\
a_2=\left \lfloor-\frac{1}{3}\right \rfloor_{3}=\frac{8}{3}, \ \ \ & r_2={-\frac{1}{3}-\frac{8}{3}}=-{3}.
\end{align*}
This means that $-\frac{1}{3}$ has purely periodic continued fraction equal to $\left [\frac{8}{3},\frac{8}{3},\frac{8}{3},... \right ]_3$. The continued fraction expansion of $\frac{5}{6}$ is then equal to 
\[
\frac{5}{6}=\left [\frac{7}{3}, \frac{7}{3},\frac{8}{3},\frac{8}{3},\frac{8}{3},\ldots \right ]_3.
\]
This shows that  even some positive rational numbers may have infinite RCF expansion. 
\end{example}

\begin{example} \label{ex.l}
If we take a prime $\ell$, we have
$
-\frac{1}{\ell}= \sum_{k=-1}^{\infty} (\ell-1) \ell^k
$
  in $\Q_{\ell}$, 
so, if we calculate the RCF expansion, we have that $a_0=\ell-\ell^{-1}$, $r_0=-\ell$; hence, $\frac{1}{r_0}=-\frac{1}{\ell}$ proving that the continued fraction is purely periodic and equal to $\left [\overline{\ell-\ell^{-1}} \right ]_{\ell}$.
Note that this continued fraction converges to $-\ell^{-1}$ in the $\ell$-adic metric, whereas it converges to $\ell$ (the only other possible limit value) in the usual euclidean topology. 

We will see in the next section that, if a rational number doesn't have a terminating RCF expansion, then it has a periodic part equal to $\left [\overline{\ell-\ell^{-1}} \right ]_{\ell}$. 

\end{example}
\begin{example} \label{ex1}
Take $\delta$ the only square root of $37$ in $\Q_3$ congruent to  $1$ modulo $3$. Let us consider $\theta=\frac{1+\delta}{6}$. We have that $\delta=1+2\cdot 3^2+3^4+ \cdots$ for some remainder in $3^5\Z_3$, so $a_0=\frac{1}{3}$ and $\alpha_1=\frac{1}{\frac{\delta-1}{6}}=\frac{\delta+1}{6}=\theta$. This means that the RCF expansion of  $\theta$ is purely periodic and equal to 
$$ \theta=\left [\frac{1}{3},\frac{1}{3},\frac{1}{3},\dotsc \right ]_3.$$

If we consider the RCF expansion of $\delta$, this does not look likely to be periodic. Carrying out the computations we obtain
\[
\delta= \left [ 1,\frac{5}{9},\frac{16}{9},\frac{7}{3},\frac{26}{81}, \frac{5}{3}, \frac{7}{9}, \frac{7}{3}, \dotsc \right ]_3.
\]
  We will show later that this expansion is indeed not periodic.
\end{example}


\section{Rationals and the terminating case: Theorem~\ref{T.terminating}}  \label{S.terminating} 

Of course if a number in $\Q_\ell$ has a terminating RCF expansion, then it is rational and positive. Hence a negative rational number cannot have a finite continued fraction. However from Example~\ref{ex2} it appears that even positive rational numbers may have infinite continued fraction expansions. Indeed, the RCF cannot terminate if in the expansion  we find a negative complete quotient. This proves the easy implication in part (i) of Theorem~\ref{T.terminating}. We now prove the rest of the statement.
\subsection{Proof of part (i) of Theorem~\ref{T.terminating}}
Simultaneously with the proof,  we give an algorithm to test whether the expansion of a given (positive) rational $\alpha\in\Q$ is terminating. The algorithm works by computing sufficiently many complete quotients and checking whether they are positive. 

\medskip

We can start by computing the first $m+1$ partial quotients, complete quotients and convergents, assuming $m=2k\ge 2$. If any of the complete quotients computed so far is negative, then the algorithm stops, and we can conclude that the RCF expansion does not terminate. Then, we may assume that $\alpha_0,\alpha_1,\ldots ,\alpha_{2k}\ge 0$. 

As showed in Section~\ref{SS.r}, the sequence of convergents $p_n/q_n$ is increasing for odd $n$ and decreasing for even $n$, and formula~\eqref{E.cq2} with $2k$ in place of $n$ shows that  $\alpha$ lies in between $p_{2k-1}/q_{2k-1}$ and $p_{2k}/q_{2k}$. By equation~\eqref{E.approx}, we have:
\begin{equation} \label{phi_2k}
0\le \varphi_{2k}=p_{2k}-\alpha q_{2k}= \frac{1}{\alpha_{2k}q_{2k}+q_{2k-1}}\le \frac{1}{q_{2k-1}},
\end{equation}
 as we are assuming $\alpha_0,\alpha_1,\ldots ,\alpha_{2k}\ge 0$.\\
On the other hand, if $b=\ell^{e_0}b_0>0$ is a denominator for $\alpha$,  where $b_0$ is a positive integer not divisible by $\ell$, the number $b_0(p_{2k}-\alpha q_{2k})$ is an integer divisible by $\ell^{s_{2k+1}-e_0}$ as $v(\varphi_{2k})=v(q_{2k+1})$ from~\eqref{E.approx2}. 

Finally, $q_{2k-1}\ge q_1=1$, so if $\ell^{s_{2k+1}}>\ell^{e_0}b_0=b$, the above equation~\eqref{phi_2k} forces $\alpha=p_{2k}/q_{2k}$.
Hence, to decide about this dichotomy we need merely to perform the algorithm until $\ell^{2k+1}>b$.

\medskip

This proves the first part of Theorem~\ref{T.terminating}; moreover, we deduce the following: 

\medskip

\noindent{\bf Quantified algorithm (i):} {\it If the Ruban continued fraction expansion of a rational number $\alpha$ with denominator $b$  is not terminating, then a negative complete quotient will appear in at most  $\max\left(\frac{\log b}{\log\ell},2\right)$ steps.}
\medskip

Using the same arguments, we can also prove the following conclusion:

\begin{prop}\label{T.ratconv} 
The Ruban continued fraction expansion of a rational number always converges in $\R$.
\end{prop}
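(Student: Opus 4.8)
The plan is to reduce everything to Proposition~\ref{P.convergence}. Since $a_i>0$ for $i>0$, that proposition tells us the RCF of $\alpha$ converges in $\R$ if and only if the real denominators $q_n$ are unbounded; so it suffices to rule out the possibility that the $q_n$ stay bounded. The terminating case requires no argument, a finite continued fraction with positive entries being an honest real number. I would therefore take the expansion of $\alpha\in\Q$ to be infinite and argue by contradiction, assuming the $q_n$ to be bounded.

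The crux is to reuse the divisibility estimate already exploited in the proof of part~(i) of Theorem~\ref{T.terminating}. Writing a denominator of $\alpha$ as $b=\ell^{e_0}b_0$ with $b_0$ prime to $\ell$, the quantity $b_0\varphi_n=b_0(p_n-\alpha q_n)$ is an integer, and by~\eqref{E.approx2} it is divisible by $\ell^{s_{n+1}-e_0}$; moreover it is nonzero throughout the infinite expansion, since $\varphi_n=0$ would mean $p_n/q_n=\alpha$ and hence termination, which we have excluded. A nonzero integer divisible by $\ell^{s_{n+1}-e_0}$ has real absolute value at least $\ell^{s_{n+1}-e_0}$, whence
\begin{equation*}
\ell^{s_{n+1}-e_0}\le \abs{b_0\varphi_n}=b_0\,\abs{p_n-\alpha q_n}.
\end{equation*}

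The boundedness hypothesis now closes the loop. As recalled in Section~\ref{SS.r}, the even convergents $p_n/q_n$ decrease and the odd ones increase, so all the $p_n/q_n$ lie in a bounded real interval; if in addition the $q_n$ are bounded, then $p_n=(p_n/q_n)q_n$ is bounded, and therefore $\abs{p_n-\alpha q_n}$ is bounded by a constant $C$ independent of $n$. Combining this with the previous display gives $\ell^{s_{n+1}-e_0}\le b_0 C$, i.e. $s_{n+1}$ is bounded; this contradicts $s_{n+1}\ge n\to\infty$. Hence the $q_n$ cannot be bounded, and Proposition~\ref{P.convergence} yields convergence in $\R$.

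The only place where care is needed is the double role played by $\varphi_n$, which is the genuine obstacle to make precise: it is simultaneously $\ell$-adically very small, its valuation growing like $s_{n+1}$ by~\eqref{E.approx2}, and, under the boundedness hypothesis, of bounded real size, and the tension between these two facts for a fixed nonzero rational is exactly what forces the contradiction. This is the same mechanism as in part~(i), where the smallness of $\varphi_{2k}$ was instead furnished by the positivity of the complete quotients; here it is furnished by the hypothetical boundedness of the $q_n$, and everything else is a routine verification of valuations via~\eqref{E.approx2} and~\eqref{tilde.recurrence}.
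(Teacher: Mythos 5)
Your proof is correct and follows essentially the same route as the paper's: both reduce to Proposition~\ref{P.convergence}, assume the $q_n$ bounded, and derive a contradiction from the fact that the nonzero integers $b_0(p_n-\alpha q_n)$ are divisible by $\ell^{s_{n+1}-e_0}\to\infty$ while remaining bounded in real absolute value. The only cosmetic difference is that you dispose of the terminating case at the outset, whereas the paper absorbs it into the final ``unless they vanish'' clause.
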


\begin{proof}  We have seen in Section~\ref{SS.r} that, if a RCF expansion does not converge in $\R$, then the $q_n$ are bounded, so  if $\alpha\in\Q$, the numbers $| p_n-\alpha q_n|$  are rationals with bounded denominators; but since $p_n/q_n$ are bounded, the numerators are  also bounded. Using~\eqref{E.approx2},  we see that they are divisible by $\ell^{s_{n+1}-e_0}$. This is eventually impossible unless they vanish, but then the RCF expansion of $\alpha$ is finite, proving anyway our conclusion. 
\end{proof}

Indeed, this conclusion follows also directly from part (ii) of Theorem~\ref{T.terminating}, which we prove in next section.


\subsection{Proof of part (ii) of Theorem~\ref{T.terminating}} 
We want now to prove that, if the RCF expansion of a rational number $\alpha$ does not terminate, then it is periodic with all the partial quotients eventually equal to $\ell-\ell^{-1}$. Moreover, we will give an explicit bound for the length of the pre-periodic part.

\medskip

We assume that $v(\alpha)< 0$, which can be achieved by replacing $\alpha$ with $\alpha_1$, and we write $\alpha=\alpha_0=\frac{d}{b\ell^{e_0} }$, where $b,d$ are coprime integers not divisible by $\ell$, $b>0$ and $e_0> 0$ consistently with our notation. Assume also that the RCF expansion of $\alpha$ does not terminate. The idea of the proof is to consider again the quantities $b\varphi_n=b(p_n-\alpha q_n)$. Arguing as in the previous section, this number is an integer, because thanks to the factor $b$ the denominator can only be a power of $\ell$, and we know from~\eqref{E.approx2}  that $v(p_n-\alpha q_n)=s_{n+1}-e_0\geq n$.
Therefore we can write $b\varphi_n=\beta_n \ell^{s_{n+1}-e_0}$ for some integers $\beta_n$, which are not zero because the continued fraction does not terminate.

Concerning the usual absolute value, by Proposition~\ref{P.qnbound}, we have
\begin{equation*}
\abs{p_n-\alpha q_n}\leq (\lambda(a_0)+|\alpha|)\lambda(a_{n-1})\cdots \lambda(a_1),
\end{equation*}
where $\lambda(a)=\frac{a+\sqrt{a^2+4}}{2}$ as seen before. We have then
\begin{equation}\label{1.estimate}
1\leq \abs{\beta_n}\leq  b(\lambda(a_0)+|\alpha|)\ell^{e_0-s_{n+1}}\prod_{i=1}^{n-1}\lambda(a_i)\leq b(\lambda(a_0)+|\alpha|)\ell^{-e_n}\prod_{i=1}^{n-1}\frac{\lambda(a_i)}{\ell^{e_i}}.
\end{equation}
As every partial quotient $a$ is of the form $\frac{r}{\ell^{e}}$, where $e\ge 1$ is an integer and $1\le r \le \ell^{e+1}-1 $, it is easy to prove that
$$
\lambda(a)\le \ell^e,
$$
with the equality holding if and only if $a=\ell-\ell^{-1}$.

We also have that $b(\lambda(a_0)+|\alpha|)\le b(\ell^{e_0}+\abs{\alpha}) \leq 2H(\alpha)$, so that from~\eqref{1.estimate} we obtain for $n>0$
\begin{equation}\label{eqn:bound.beta}
\abs{\beta_n}\leq  \frac{2 }{\ell^{e_n}}H(\alpha) \leq \frac{2}{\ell} H(\alpha),
\end{equation}
independently of $n$. This shows that the $\abs{\beta_n}$ belong to a fixed finite set of cardinality at most $\frac{2}{\ell} H(\alpha)$ (remember that we are assuming $e_0>0$, so $H(\alpha)\geq\ell$).

Moreover, as $\lambda(a)$ is an increasing function of $a\ge 0$, we have 
 \begin{enumerate}
\item if $e\ge 2$, then
$\lambda(a)\ell^{-e}\le \lambda(\ell)\ell^{-e}\le \frac{(1+\sqrt{2})\ell}{2\ell^{e}}\le \left (\frac{1+\sqrt{2}}{2\ell} \right )^{e-1} \le\left (\frac{1+\sqrt 2}{2\ell}\right )^{\frac{e}{2}}\le \left (\frac{1+\sqrt 2}{4}\right )^{\frac{e}{2}};$
\item if $e=1$ and $a\neq \ell-\ell^{-1}$, then $\lambda(a)\ell^{-1}\le \lambda \left (\ell-\frac{2}{\ell} \right)\ell^{-1}=\frac{\ell^2-2+\sqrt{\ell^4+4}}{2\ell^2} \le  1-\frac{3}{4\ell^2}$.
\end{enumerate}
But for every $\ell \ge 2$ we have that $ \left (\frac{1+\sqrt 2}{4}\right )^{\frac{1}{2}}< 1-\frac{3}{4\ell^2}$, so $\lambda(a)\ell^{-e} \le \craz^e $ where $\craz:=1-\frac{3}{4\ell^2}<1$, unless $a=\ell-\ell^{-1}$.

Using this last estimate in~\eqref{1.estimate} and putting $\sigma_n:=\sideset{}{'}\sum e_i$, the sum being extended to all indices $1\le i\le n-1$ with $a_i\neq \ell-\ell^{-1}$, we have
\[
1\leq b(\lambda(a_0)+\abs{\alpha})\ell^{-1}\craz^{\sigma_n}\leq \frac{2}{\ell} H(\alpha)\craz^{\sigma_n}.
\]
Hence $\sigma_n$ is bounded independently of $n$. 
 This shows that only finitely many $a_n$ can be different from $\ell-\ell^{-1}$, so the continued fraction expansion is periodic and all the partial quotients are eventually equal to this number. 

To conclude the proof, we are going to exhibit an explicit bound for the length of the pre-periodic part in the case that the continued fraction expansion does not terminate.

Consider the identity $\alpha_n=-\frac{\varphi_{n-1}}{\varphi_n}=-\frac{\beta_{n-1}}{\ell^{e_n}\beta_n}$. We have from~\eqref{eqn:bound.beta} that all the $\alpha_n$ belong to a fixed finite set of cardinality at most $\frac{8}{\ell} H(\alpha)^2$ and have height bounded by $2H(\alpha)$. Then, for some $i<j\leq \frac{8}{\ell} H(\alpha)^2 +1$, we will find $\alpha_i=\alpha_j$ and the continued fraction becomes periodic from $\alpha_i$ on, with all partial quotients equal to $\ell-\ell^{-1}$ as proved before.

This computation holds under the assumption $e_0>0$. It might be needed to replace $\alpha_0$ with $\alpha_1$. In this case the length of the preperiodic part is increased by one, while the height $H(\alpha)$ is replaced by $H(\alpha_1)\leq 2\ell H(\alpha)$ (by Proposition~\ref{prop.bound.altezza.quozienti.completi} if $v(\alpha)=0$, otherwise $\alpha_1=1/\alpha$ and they have the same height).

This completes the proof of Theorem~\ref{T.terminating}; moreover, we deduce the following:

\medskip

\noindent{\bf Quantified algorithm (ii):} {\it If the Ruban continued fraction expansion of a rational number $\alpha$ is not terminating, then the length of the pre-periodic part can be explicitly computed as above; in particular it is at most $32\ell H(\alpha)^2$.

Moreover for all $n\geq 0$ the height of the complete quotients is bounded by
\[
H(\alpha_n)\leq 4\ell H(\alpha).
\]
}
\ 
\begin{rem} \label{remark.rational}
We point out that one can compare the complexity of a rational number $\alpha=\frac{p}{q}$ with a finite RCF expansion with the length $k$ of the expansion itself. In fact, it is easy to prove using Lemma~\ref{lemma.lowerbound}, that
\begin{equation*} \label{tilde.estimates}
k \le \frac{\log \min\{p,q\}}{\log\ell}+2\leq \frac{h(\alpha)}{\log\ell}+2.
\end{equation*}
\end{rem}
\medskip


\subsection{Finiteness of the expansion for varying \texorpdfstring{$\ell$}{l}}

Given $\alpha \in \Q\subseteq \Q_\ell$, we can ask what happens to the expansion when we vary the prime $\ell$.
The following proposition gives an answer to this question:

\begin{prop}
Let $\alpha\in\Q$. The following holds:
\begin{enumerate}
\item[(i)] If $\alpha<0$, then for every prime number $\ell$ the RCF expansion of $\alpha$ does not terminate;
\item[(ii)] If $\alpha\geq 0$ and $\alpha\in\Z$, then there are only finitely many prime numbers $\ell$ such that the RCF expansion of $\alpha$ does not terminate;
\item[(iii)] If $\alpha\geq 0$ and $\alpha\not\in\Z$, then there are only finitely many prime numbers $\ell$ such that the RCF expansion of $\alpha$ terminates.
\end{enumerate}
\end{prop}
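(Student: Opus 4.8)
The plan is to dispose of (i) and (ii) at once and to concentrate the actual work on (iii). For (i), a terminating RCF expansion produces the finite continued fraction $[a_0,a_1,\dots,a_k]$, all of whose partial quotients satisfy $a_0\ge 0$ and $a_i>0$ for $i\ge 1$ by Definition~\ref{integral_part}; hence its value, which equals $p_{k+1}/q_{k+1}$ with $p_{k+1},q_{k+1}\ge 0$, is a non-negative rational. A negative $\alpha$ can therefore never terminate, for any $\ell$. For (ii), if $\alpha\in\Z$ with $\alpha\ge 0$ and $\ell>\alpha$, then $\alpha$ already lies in $\{0,1,\dots,\ell-1\}\subseteq\Z[1/\ell]$ and $|\alpha-\alpha|_\ell=0<1$, so $a_0=\pfloor{\alpha}=\alpha$, giving $r_0=0$ and immediate termination. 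Thus non-termination can occur only for the finitely many primes $\ell\le\alpha$.

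For (iii) I would write $\alpha=p/q$ in lowest terms with $p>0$ and $q\ge 2$ (the value $\alpha=0$ is excluded since $\alpha\notin\Z$, so $\alpha>0$), and show that for all but finitely many $\ell$ a \emph{negative} complete quotient appears already at the first step, which by part (i) of Theorem~\ref{T.terminating} rules out termination. The idea is to compute $\alpha_1$ explicitly for primes $\ell\nmid pq$. For such $\ell$ we have $v(\alpha)=0$, hence $e_0=0$ and $a_0$ is the unique integer in $\{0,\dots,\ell-1\}$ with $\ell\mid(p-a_0q)$, i.e.\ $a_0\equiv pq^{-1}\pmod\ell$. Then $r_0=\alpha-a_0\neq 0$ (otherwise $\alpha=a_0\in\Z$), and
\[
\alpha_1=\frac{1}{r_0}=\frac{q}{p-a_0q},
\]
whose sign as a real number is that of $\alpha-a_0$. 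So $\alpha_1<0$ precisely when $a_0>\alpha$.

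The remaining point, and the only one requiring a genuine argument, is that $a_0>\alpha$ holds for all but finitely many primes. Since $\alpha$ is not an integer, the inequality $a_0\le\alpha$ forces $a_0\in\{0,1,\dots,\lfloor\alpha\rfloor\}$, i.e.\ $\ell\mid(p-jq)$ for some $j\in\{0,1,\dots,\lfloor\alpha\rfloor\}$. For each such $j$ the integer $p-jq=q(\alpha-j)$ is strictly positive (as $j\le\lfloor\alpha\rfloor<\alpha$), hence nonzero, and is therefore divisible by only finitely many primes. Collecting the finitely many primes dividing $pq$ together with those dividing some $p-jq$ yields a finite exceptional set $S$; for every $\ell\notin S$ we obtain $\ell\nmid pq$ and $a_0\ge\lfloor\alpha\rfloor+1>\alpha$, so $\alpha_1<0$ and the RCF does not terminate. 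Consequently termination can hold only for the finitely many primes in $S$. I expect no serious obstacle beyond the bookkeeping of $S$; the one thing to be careful about is that the sign condition is genuinely controlled by the residue $a_0\bmod\ell$ being pushed into the short initial segment $\{0,\dots,\lfloor\alpha\rfloor\}$, an event that can occur for only finitely many $\ell$.
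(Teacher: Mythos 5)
Your proposal is correct and follows essentially the same route as the paper: parts (i) and (ii) are handled identically, and for (iii) both arguments show that for all but finitely many $\ell$ the first complete quotient $\alpha_1$ is negative, because $a_0\le\alpha$ would force the positive integer $p-a_0q$ to be divisible by $\ell$ while being bounded independently of $\ell$ (the paper phrases this as the explicit condition $\ell>\max\{n,m\}$ rather than collecting an exceptional set $S$, but the contradiction is the same). Non-termination then follows from part (i) of Theorem~\ref{T.terminating} in both treatments.
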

\begin{proof}
 Part (i) follows directly from Theorem~\ref{T.terminating}. 
 
 Assume that $\alpha \in \Z_{\ge 0}$ has finite RCF expansion. Then, by Remark~\ref{remark.rational}, the length of this expansion is at most $2$. More specifically, if $\alpha < \ell$, then $\lfloor \alpha \rfloor_{\ell}=\alpha$ and so the RCF expansion is equal to $[\alpha]_\ell$ with length one. This proves the assertion~(ii). We point out that we can say something more about the behaviour of the RCF expansion in $\Q_{\ell}$ with $\ell \le \alpha$. In this case in fact, $\lfloor \alpha \rfloor_{\ell} \neq \alpha$ so the expansion has necessarily length $2$, and this happens if and only if $\alpha=\ell^h+a_0$ with $0\le a_0 < \ell$ and $h\ge 1$. 
 
 Let now take $\alpha=\frac{n}{m}$ with $n,m$ positive coprime integers and $m>1$. We prove that, if $\ell$ is a prime number with $\ell>\max\{n,m\}$, then the RCF expansion of $\alpha$ in $\Q_{\ell}$ does not terminate. 
 
 Let $a_0=\pfloor{\alpha}$; we have that $a_0\in\Z$ because $\ell\nmid m$.
 If $\frac{n}{m}-a_0\geq 0$, then we would have that $\ell>n>n-ma_0> 0$, which is impossible because $n-ma_0$ is divisible by $\ell$. Then we see that $\alpha_1=\frac{1}{\frac{n}{m}-a_0}$, which is the first complete quotient of the RCF expansion of $\alpha$, is negative. Then by Theorem~\ref{T.terminating} the expansion does not terminate. This completes the proof of the proposition.
\end{proof}


\section{About quadratic expansions}\label{S.special} 

In this section we analyse the behaviour of the RCF expansion of the quadratic irrationals $\alpha\in\Q_\ell$. For such $\alpha$, we denote by $\alpha'$ the algebraic conjugate over $\Q$.
 
From a quadratic equation $A\alpha^2+B\alpha+C=0$, with $A,B,C\in\Z$ and $A\neq 0$, we derive the shape 
   \begin{equation} \label{E.shape}
   \alpha=\frac{b_0+\delta}{\ell^{f_0} c_0},
   \end{equation}
where $\delta\in\Q_\ell$, $\delta^2=\Delta$ is a non-square integer (non necessarily square-free) and $b_0,c_0,f_0$ are integers with $\ell \nmid c_0$. 

A necessary and sufficient condition for $\delta$ to lie in $\Q_{\ell}$ can be obtained with Hensel's lemma. Let us write $\Delta=\ell^{h}\tilde{\Delta}$ with $\ell\nmid\tilde{\Delta}$; then $\delta\in\Q_{\ell}$ if an only if $h$ is even and $\tilde{\Delta}$ is a square modulo~$\ell$, if $\ell$ is odd, or $\tilde{\Delta}\equiv 1 \pmod 8$, if $\ell=2$. In particular $\ell$ will always appear with even exponent in the values of $\Delta$ that we consider.

Notice that the $b_0,\delta,f_0,c_0$ in~\eqref{E.shape} are not uniquely determined. We could obtain uniqueness by imposing a coprimality condition, but for our aims it is more convenient to allow multiple representations and common factors between the numerator and the denominator. We can always require
\begin{equation} \label{first.shape}
c_0 \mid \Delta - b_0^2.
\end{equation}
In fact this is not restrictive since it can be achieved for instance by replacing $b_0,c_0, \Delta$ respectively by $b_0c_0, c_0^2, c_0^2\Delta$.

Recall that $e_0=-v(\alpha)$; then, $e_0=f_0-v(b_0+\delta)$, so $e_0 \le f_0$. Under the non-restrictive assumption $\vl(\alpha)\leq 0$, we have that $f_0\geq 0$.

\medskip

We want now to detail the behaviour of the first step in the $\ell$-adic continued fraction expansion of $\alpha$.
\medskip

Take $a_0=\lfloor \alpha \rfloor_{\ell}=\frac{r_0}{\ell^{e_0}}$, where $r_0$ is the unique integer with  $0\le r_0\le \ell^{e_0+1}-1$ such that $b_0+\delta-c_0 r_0 \ell^{f_0-e_0}\equiv 0 \pmod {\ell^{f_0+1}}$, where this congruence holds in $\Z_{\ell}$. 
In this case, $r_0\neq 0$. We can then write $\alpha$ as:
   $$
   \alpha= \frac{r_0}{\ell^{e_0}}+\frac{\delta+b_0-c_0r_0\ell^{f_0-e_0}}{\ell^{f_0} c_0} =\frac{r_0}{\ell^{e_0}}+\frac{\Delta-(c_0r_0\ell^{f_0-e_0}-b_0)^2}{\ell^{f_0} c_0(\delta+(c_0r_0\ell^{f_0-e_0}-b_0))}.
   $$
Now, if we denote $b_1:= c_0r_0\ell^{f_0-e_0}-b_0$, we easily find that  
$\Delta-b_1^2\equiv \Delta-b_0^2\equiv 0 \pmod {c_0}$ and $\ell^{f_0+1} \mid [\delta-(c_0r_0\ell^{f_0-e_0}-b_0)] \mid \Delta-b_1^2$. This means that we can write $\Delta-b_1^2=\ell^{f_0+f_1}c_0 c_1$ with $f_1\ge 1$ and $(\ell, c_1)=1$. Hence, we have
$$
   \alpha=a_0+\frac{1}{\alpha_1},  
$$
with 
$$
a_0:=\frac{r_0}{\ell^{e_0}}, \qquad \mbox{and} \qquad \alpha_1:=\frac{b_1+\delta}{\ell^{f_1}c_1},
$$
where $r_0, f_1, b_1,c_1$ are defined as above. 

In particular, this shows that the first complete quotient  $\alpha_1$ (and hence all the subsequent ones) has the same shape of $\alpha$, especially in the fact that $\delta$ in the numerator appears with the coefficient $1$. This is due to our special hypothesis that $ c_0\ |\ \Delta-b_0^2$ (which, as noted above, can always be achieved and is preserved at every step).

\medskip

We can continue our expansion and, as a matter of notation, we shall put
 \begin{equation*}
   \alpha_m=\frac{b_m+\delta}{\ell^{f_m}c_m}=[a_m,a_{m+1},\ldots]_\ell,
   \end{equation*} 
where $\ b_m,\ c_m$ and $f_m$ are integers defined by the following recurrence formulae:
\begin{equation} \label{quadratic.recurrence} 
\begin{cases}
b_n+b_{n+1} &=a_n\ell^{f_n}c_n \\
\ell^{f_n+f_{n+1}}c_nc_{n+1} &=  \Delta - b_{n+1}^2.
\end{cases}
\quad \mbox{for all } n\ge 0.
\end{equation}

Notice that, since we are assuming that $\ell$ does not divide any $c_n$,  the second formula of~\eqref{quadratic.recurrence} implies that $f_n+f_{n+1}=\vl(\Delta-b_{n+1}^2)$; hence, as $f_n\ge 0$, we have for every $n\ge 0$,
\begin{equation*}
1\le f_{n+1}\le v(\Delta-b_{n+1}^2),
\end{equation*} 
where the second inequality becomes strict when $n\ge 1$ as $f_n\ge 1$.

\bigskip

We just saw that, computing the RCF expansion of a quadratic $\alpha$ satisfying~\eqref{first.shape}, all complete quotients will be quadratic numbers in $\Q_{\ell}(\alpha)$ of a similar shape. Moreover once we fix $(\Delta,b_0,c_0,f_0)$ the recurrence~\eqref{quadratic.recurrence} defines sequences $b_n,f_n,c_n$ uniquely. We will now show that, after a finite number of steps depending only on $\alpha$, we reach a complete quotient $\alpha_M$ which admits a (possibly different) representation $(\tilde\Delta,\tilde{b}_M,\tilde{c}_M,\tilde{f}_M)$ which satisfies some additional conditions. More specifically, we have the following proposition:
\begin{prop} \label{Prop.Part.shape}\mbox{}
\begin{enumerate} 
\item[(i)] Let $\alpha=\frac{b_0+\delta}{\ell^{f_0} c_0}$ as before satisfying~\eqref{first.shape}. Assume in addition that 
\begin{equation} 
\begin{cases} \label{Part.shape}
\ell^{f_m}c_m \mid \Delta-b_m^2, \\
\ell \nmid {\Delta}, \\ 
v(\alpha_m)=-f_m<0 \quad &\mbox{if $\ell$ is odd}, \\
v(\alpha_m)=1-f_m<0 \quad &\mbox{if $\ell=2$}
\end{cases}
\end{equation}
holds for $m=0$. Then~\eqref{Part.shape} holds for every $m\geq 0$ as well.

\item[(ii)] Assume that $\Delta=\ell^{2h}\tilde\Delta$, with $h\ge 0$ and $(\ell, \tilde\Delta)=1$. 
Then, there exists a positive integer $M \le h+2$ such that
$\alpha_{M}=\frac{\tilde{b}_{M}+\tilde{\delta}}{\ell^{\tilde{f}_{M}} \tilde{c}_{M}}$ with ${\tilde{\delta}}^2=\tilde{\Delta}$ and $\tilde{b}_{M}, \tilde{c}_{M}, \tilde{f}_{M}$ integers satisfying~\eqref{Part.shape}.
\end{enumerate}
\end{prop}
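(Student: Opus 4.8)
The plan is to prove part (i) by induction on $m$, and then to use it, together with a reduction of $\Delta$ to $\tilde\Delta$, to establish part (ii). For part (i), the computation preceding the recurrence \eqref{quadratic.recurrence} already gives the explicit shape $\alpha_{m+1}=\frac{\delta+b_{m+1}}{\ell^{f_{m+1}}c_{m+1}}$ with $b_{m+1}=a_m\ell^{f_m}c_m-b_m$. Two of the three conditions in \eqref{Part.shape} are immediate: $\ell\nmid\Delta$ is fixed once and for all, and the divisibility $\ell^{f_{m+1}}c_{m+1}\mid\Delta-b_{m+1}^2$ follows from the second line of \eqref{quadratic.recurrence}, since $\Delta-b_{m+1}^2=\ell^{f_m+f_{m+1}}c_mc_{m+1}$ and $f_m\ge 0$. (That $c_{m+1}$ is an integer uses $b_{m+1}\equiv-b_m\pmod{c_m}$, which holds because $v(\alpha_m)=v(a_m)$ makes $a_m\ell^{f_m}c_m$ an integer multiple of $c_m$, combined with the inductive hypothesis $c_m\mid\Delta-b_m^2$.) The real content is the valuation condition, which I would obtain from the factorisation $\Delta-b_{m+1}^2=(\delta-b_{m+1})(\delta+b_{m+1})$.

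Since $\ell\nmid\Delta$ we have $v(\delta)=0$, so $\delta$ is a unit. From $\abs{r_m}_{\ell}<1$ and $r_m=\frac{\delta-b_{m+1}}{\ell^{f_m}c_m}$ I get $v(\delta-b_{m+1})=v(r_m)+f_m\ge 1+f_m>0$, so $b_{m+1}\equiv\delta\pmod{\ell}$. For $\ell$ odd, $\delta\not\equiv-\delta\pmod{\ell}$, hence $v(\delta+b_{m+1})=0$ and $v(\alpha_{m+1})=-f_{m+1}$, as required. For $\ell=2$ one first checks that $b_{m+1}$ is odd, so that $\delta\pm b_{m+1}$ are both even, and then combines $v(2\delta)=1$ with $v(\delta-b_{m+1})\ge 2$ (which holds since $f_m\ge 1$ for $m\ge 1$ and $f_0\ge 2$ when $\ell=2$) to conclude $v(\delta+b_{m+1})=1$ and $v(\alpha_{m+1})=1-f_{m+1}$; the strict inequality $f_{m+1}>1$ then comes from $e_{m+1}\ge 1$.

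For part (ii), write $\delta=\ell^h\tilde\delta$ with $\tilde\delta^2=\tilde\Delta$ a unit, so that $\alpha_m=\frac{b_m+\ell^h\tilde\delta}{\ell^{f_m}c_m}$. If at some step $\ell^h\mid b_m$, then factoring out $\ell^h$ rewrites $\alpha_m=\frac{(b_m/\ell^h)+\tilde\delta}{\ell^{f_m-h}c_m}$, which is a representation over $\tilde\delta$; because $v(\alpha_m)<0$ and the numerator has valuation $\ge 0$, this forces $f_m-h\ge 1$, and then the divisibility and valuation in \eqref{Part.shape} for $\tilde\Delta$ can be read off exactly as in part (i). Thus the whole problem reduces to producing such an index quickly. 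The key observation is that $f_{m-1}\ge h$ already forces $v(b_m)=h$: indeed $v(\delta-b_m)=e_m+f_{m-1}\ge 1+h>v(\delta)=h$, so $b_m$ and $\delta$ must share the valuation $h$.

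The main obstacle is therefore the quantitative bound $M\le h+2$, i.e.\ showing that $f_m$ reaches $h$ in few steps. Here I would exploit the dichotomy coming from $f_{m-1}+f_m=v(\Delta-b_m^2)$: as long as $e_m+f_{m-1}<h$ one has $v(b_m)=e_m+f_{m-1}<h$, whence $v(\Delta-b_m^2)=2(e_m+f_{m-1})$ and $f_m=f_{m-1}+2e_m\ge f_{m-1}+2$; and the first time $e_m+f_{m-1}\ge h$ one instead gets $v(\Delta-b_m^2)\ge 2h$ and $f_m\ge 2h-f_{m-1}>h$. So $f_m$ increases by at least $2$ each step until it crosses $h$, which happens well within $h+2$ steps starting from $f_0\ge 0$; taking $M$ to be the successor of the first such index gives $f_{M-1}\ge h$, hence $\ell^h\mid b_M$ and the desired representation. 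The one delicate point is the valuation condition when $\ell=2$: the clean argument $v(\tilde\delta+\beta)=1$ requires $v(\tilde\delta-\beta)\ge 2$, so one should select $M$ with $f_{M-1}\ge h+1$ (still inside the allowed range), or else invoke $\tilde\Delta\equiv 1\pmod 8$ to dispose directly of the borderline case $f_{M-1}=h$, $e_M=1$.
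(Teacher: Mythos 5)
Your part (i) is essentially the paper's argument: both rest on $v(\delta-b_{m+1})\ge f_m+1$ together with $v(2\delta)=0$ (resp.\ $1$) and the ultrametric inequality to pin down $v(\delta+b_{m+1})$, and hence $v(\alpha_{m+1})$. Part (ii), however, takes a genuinely different route. The paper peels off one power of $\ell$ at a time from the numerator: at each step it divides out $\ell^{\min(h,f+1)}$, replacing $\delta$ by some $\delta_1$ with $v(\delta_1)\le h-1$, so that after at most $h$ such steps the ordinate is prime to $\ell$, and then spends up to two further steps (with a case analysis for $\ell=2$) normalising the valuation; this matches the step-by-step simplification performed in Algorithm~3. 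You instead keep $\Delta$ fixed and track the exponents $f_m$ through the identity $f_{m-1}+f_m=v(\Delta-b_m^2)=2(e_m+f_{m-1})$ valid while $e_m+f_{m-1}<h$, deducing that $f_m$ grows by at least $2$ per step until it exceeds $h$, at which point $v(b_M)=h$ exactly and the whole factor $\ell^h$ can be removed in one shot; the divisibility, integrality and valuation conditions for the new representation then all reduce to $f_{M-1}\ge h$ (or $\ge h+1$ when $\ell=2$, which you correctly flag and which can be rescued via $\tilde\Delta\equiv 1\pmod 8$). This buys a slightly sharper bound (roughly $h/2+2$ rather than $h+2$ steps) and avoids introducing the intermediate ordinates $\delta_1,\delta_2,\dotsc$, at the cost of a more delicate bookkeeping of the exponents. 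One small imprecision: your claim that the first index with $e_m+f_{m-1}\ge h$ satisfies $f_m\ge 2h-f_{m-1}>h$ fails when $f_{m-1}$ already exceeds $h$ (e.g.\ a single large $e$ can overshoot), but in that case $f$ has already crossed $h$ at the previous index, so your operative statement --- that $f$ crosses $h$ within the allotted number of steps and one takes $M$ to be the successor of the first crossing --- survives intact.
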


\begin{proof}
We prove first the part (i) of the statement.

The first two conditions in~\eqref{Part.shape} are clearly preserved by the recurrence formulae~\eqref{quadratic.recurrence}. We only have to show that the condition on $\vl(\alpha_m)$ is preserved as well.

Suppose first that $\ell$ is odd. Then $\vl(b_1-\delta)\geq f_0+1\geq 2$ by construction, while $\vl(2\delta)=0$. Therefore by the ultrametric inequality we have $\vl(b_1+\delta)=0$ and $\vl(\alpha_1)=-f_1$.

If $\ell=2$ instead, we have $\vl(b_1-\delta)\geq f_0+1\geq 2$ by construction, while $\vl(2\delta)=1$. Therefore by the ultrametric inequality we have $\vl(b_1+\delta)=1$ and $\vl(\alpha_1)=1-f_1$.

\bigskip

We now prove part (ii). After changing the representation of $\alpha$ and replacing $(b_0,\Delta,f_0)$ with $(b_0/\ell^k,\Delta/\ell^{2k},f_0-k)$ if needed, we may suppose that $\ell\nmid (b_0,\Delta).$
Let us write now $\Delta= \ell^{2h}\tilde{\Delta}$ for some integer $h\geq 0$.

Let us first assume $h>0$. As by assumption $\ell \nmid (\Delta, b)$ and $\ell \mid \Delta$, we have that $\ell \nmid b_0+\delta$, hence $v(b_0+\delta)=0$ and $e_0=-v(\alpha)=f_0$.

Then by the algorithm discussed at the beginning of the section, we have  
\[ \alpha_1=\frac{b_1+\delta}{\ell^{f_1} c_1},\]
where $b_1=c_0r_0-b_0$ as shown before. But, by construction, we have that $\delta-b_1 \equiv 0$ $\pmod {\ell^{f_0+1}}$ and $\ell^h \mid \delta$, hence $b_1\equiv 0 \pmod{ \ell^{\min\{h, f_0+1\}} }$. This means that we can simplify the factor $\ell^{\min\{h, f_0+1\}}$ (whose exponent is $\ge 1$), obtaining 
$\alpha_1=\frac{\tilde{b}_1+\delta_1}{\ell^{\tilde{f}_1}c_1}$, with $v(\delta_1)\le h-1$ and $0>\vl(\alpha_1)=\vl(\tilde{b}_1+\delta_1)-\tilde{f}_1$, so that $\tilde{f}_1> 0$. If $\vl(\delta_1)>0$, then we can repeat the argument. In this way we see in at most $h$ steps we reach $\alpha_m=\frac{\tilde{b}_m+\tilde{\delta}}{\ell^{\tilde{f}_m}c_m}$ with $\tilde{\Delta}=\tilde{\delta}^2$ satisfying $(\ell, \tilde{\Delta})=1$ and $\tilde{f}_m> 0$. Notice that, in the last step, we are no more sure that $v(\tilde{b}_m+\tilde{\delta})=0$ as $(\ell, \tilde{\Delta})=1$, so we could in principle have $ \tilde{f}_m \neq -\vl(\alpha_m)$.

We have shown that, up to performing the previous procedure and replacing $\Delta$ with $\tilde{\Delta}$, we can assume that $\ell\nmid\Delta$.

\bigskip

Let us now analyse what happens if $f_0 \neq -\vl(\alpha)$. 
We will show that, after possibly replacing $\alpha$ with its second complete quotient, we can always satisfy the assumption $f_0=-\vl(\alpha)>0$ if $\ell$ is odd or $f_0=1-\vl(\alpha)$ if $\ell=2$.

\bigskip

Take $\alpha=\frac{b_0+\delta}{\ell^{f_0} c_0}$. As shown before, we can assume that $(\ell, \Delta)=1$. In general, we have that $-e_0=\vl(\alpha)=\vl(b_0+\delta)-f_0$, so $e_0\le f_0$.

Then, performing the first step of the algorithm as explained at the beginning of the section, we obtain $\alpha_1=\frac{\delta+b_1}{\ell^{f_1}c_1}$ with $\ell^{f_1}c_1 \mid \Delta- b_1^2$.

Let us distinguish two cases: 

\begin{itemize}
   \item if $\ell$ odd, then by assumption $\ell \nmid \Delta$ so that $\vl(2\delta)=0$. By construction we have that $\vl(\delta-b_1)\ge 1$ and so by the ultrametric inequality $\vl(\delta+b_1)=0$ and $e_1=-\vl(\alpha_1)=f_1$ as wanted.
                 
   \item assume now that $\ell=2$. If $e_0=f_0\ge 0$, this means that $\vl(\delta+b_0)=0$, hence as $2 \nmid \Delta$, $b_0$ is even. Now, performing the first step and using the first equation of~\eqref{quadratic.recurrence}, we have that $b_1$ is odd, hence $\vl(\delta+b_1)\ge 1$. So, after one step, we reduced ourselves to the case $e_1>f_1> 0$.  \\
Assume now $e_0 > f_0$, \textit{i.e.} $\vl(\delta+ b_0)>0$. By~\eqref{quadratic.recurrence}, we have that 
\begin{equation} \label{quadratic.recurrence2}
   f_0+f_1= \vl(\Delta-b_1^2)=\vl(\delta+b_1)+\vl(\delta-b_1). 
\end{equation}
Now, we have by construction that both $\vl(\delta+b_1)$ and $\vl(\delta-b_1)$ are $\ge 1$ and moreover $\min\{\vl(\delta+b_1),\ \vl(\delta-b_1) \}=1$.  Using in~\eqref{quadratic.recurrence2} that $f_1-e_1=\vl(\delta+b_1)$, we have that $\vl(\delta-b_1)=f_0+e_1 \ge 2$, as by assumption $f_0\ge 1$ and $e_1=-\vl(\alpha_1)\ge 1$. But this shows that $\vl(\delta+b_1)=1$, hence $\vl(\alpha_1)=1-f_1 <0$, as wanted. The same holds for all subsequent steps, proving the statement.\qedhere
\end{itemize}
\end{proof}

\begin{definition} \label{D.shape} We shall refer to $\Delta=\delta^2$  in~\eqref{E.shape} as the \textit{ordinate} of $\alpha$ for this shape. Note that this is not uniquely determined by $\alpha$ (but it is determined up to a square).
\end{definition}

\medskip

\begin{example}
Take $\ell=3$ and let $\delta$ be the only square root of $13$ in $\Q_3$ which is congruent to $1$ modulo $3$. We want to compute the $\ell$-adic expansion of $\delta$.
\begin{itemize}
   \item $a_0=\lfloor\delta\rfloor_3=1$, so $\alpha_1=\frac{1}{\delta-1}=\frac{\delta +1}{12}$;
   \item $a_1=\lfloor\alpha_1\rfloor_3=\frac{2}{3}$, so $\alpha_2=\frac{\delta - 7}{12}$.
\end{itemize}
Going on with the calculations, we have that
$\delta= \left[1, \frac{2}{3}, \frac{4}{3}, \frac{8}{9}, \dotsc \right]_3$, which does not seem to have a periodic pattern. 
\end{example}


\subsection{Useful bounds}\label{Bound.lineare.em}
Using the recurrence relations given by~\eqref{quadratic.recurrence} it is possible to give exponential bounds for the quantities $b_m$ and $c_m\ell^{e_m}$.

\begin{prop}\label{P.bound.em}
The following bounds hold:
\begin{align*}
\abs{b_n}&\ll_{\alpha}\ceigen(\ell)^n, \\
\abs{c_n\ell^{f_n}}&\ll_{\alpha}\ceigen(\ell)^n,\ \\
f_n&<3n+O_{\alpha}(1),
\end{align*}
where $\ceigen(\ell)=\frac{\ell^2+2+\ell\sqrt{\ell^2+4}}{2}$ and the implied constants are effectively computed below.
\end{prop}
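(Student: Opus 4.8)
The plan is to find a closed-form expression for the integer combination $C_n := c_n \ell^{f_n}$ in terms of the convergents, and then to read off all three bounds from it. Alongside $\varphi_n = p_n - \alpha q_n$ from~\eqref{E.approx}, I would introduce its conjugate $\psi_n := p_n - \alpha' q_n$, where $\alpha'$ is the $\Q$-conjugate of $\alpha$. Applying the nontrivial automorphism of $\Q(\alpha)/\Q$ to the identity~\eqref{E.cq} (which fixes the rational numbers $p_n,q_n$) yields $\alpha' = (\alpha_n' p_n + p_{n-1})/(\alpha_n' q_n + q_{n-1})$, and solving for $\alpha_n'$ gives the exact analogue of~\eqref{E.cq2}, namely $\alpha_n' = -\psi_{n-1}/\psi_n$.

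First I would combine this with~\eqref{E.cq2} to compute $\alpha_n - \alpha_n' = -\varphi_{n-1}/\varphi_n + \psi_{n-1}/\psi_n$; expanding the numerator $\psi_{n-1}\varphi_n - \varphi_{n-1}\psi_n$ and invoking~\eqref{E.convergents2} collapses it to $(\alpha-\alpha')(-1)^n$, so that $\alpha_n - \alpha_n' = (\alpha-\alpha')(-1)^n/(\varphi_n\psi_n)$. Since $\alpha_n = (b_n+\delta)/C_n$ and $\alpha_n' = (b_n-\delta)/C_n$ give $C_n = 2\delta/(\alpha_n-\alpha_n')$, and likewise $C_0 = 2\delta/(\alpha-\alpha')$, this produces the key identity
\[
C_n = c_n \ell^{f_n} = (-1)^n C_0\, \varphi_n \psi_n .
\]
The crucial point is that $\varphi_n\psi_n = p_n^2 - (\alpha+\alpha')p_n q_n + \alpha\alpha' q_n^2$ is a symmetric function of the conjugates, hence a \emph{rational} number whose archimedean size I can legitimately estimate.

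Next I would bound the factors. By Proposition~\ref{P.qnbound}, and the fact that every partial quotient satisfies $a_i < \ell$ (so $\lambda(a_i) \le \lambda(\ell)$), one gets $p_n, q_n \le \lambda(\ell)^n$; since $\ceigen(\ell) = \lambda(\ell)^2$ this gives
\[
|C_n| \le |C_0|\,\bigl(1 + |\alpha+\alpha'| + |\alpha\alpha'|\bigr)\,\lambda(\ell)^{2n} \ll_\alpha \ceigen(\ell)^n ,
\]
with the explicit constant $|c_0|\ell^{f_0}(1 + |\alpha+\alpha'| + |\alpha\alpha'|)$. This is the bound for $|c_n\ell^{f_n}|$. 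For $b_n$, rather than iterating a recurrence I would use the second line of~\eqref{quadratic.recurrence} in the form $b_n^2 = \Delta - C_{n-1}C_n$, whence $|b_n|^2 \le |\Delta| + |C_{n-1}||C_n| \ll_\alpha \ceigen(\ell)^{2n}$ and so $|b_n| \ll_\alpha \ceigen(\ell)^n$. Finally, as $c_n$ is a nonzero integer we have $\ell^{f_n} \le |C_n| \ll_\alpha \ceigen(\ell)^n$, giving $f_n \le (\log\ceigen(\ell)/\log\ell)\,n + O_\alpha(1)$; a direct verification that $\ceigen(\ell) < \ell^3$ for every prime $\ell$ (the worst case $\ell=2$, where $\ceigen(2) = 3+2\sqrt{2} < 8$) then yields $f_n < 3n + O_\alpha(1)$.

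The main obstacle is establishing the closed form $C_n = (-1)^n C_0\varphi_n\psi_n$ cleanly: one must correctly conjugate~\eqref{E.cq}, carry out the cancellation in $\psi_{n-1}\varphi_n - \varphi_{n-1}\psi_n$ through~\eqref{E.convergents2}, and recognize $\varphi_n\psi_n$ as a rational norm so that an archimedean estimate is meaningful. Once this identity is in hand the three bounds are immediate, and keeping the implied constants effective is routine bookkeeping.
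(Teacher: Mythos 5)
Your proof is correct, but it takes a genuinely different route from the paper's. The paper works entirely inside the recurrences~\eqref{quadratic.recurrence}: writing $k_n=\ell^{f_n}c_n$, it derives the coupled inequalities $\abs{k_{n+1}}<\abs{k_{n-1}}+2\ell\abs{b_n}+\ell^2\abs{k_n}$ and $\abs{b_{n+1}}<\abs{b_n}+\ell\abs{k_n}$, majorizes $(\abs{k_n},\abs{b_n})$ by powers of an explicit $3\times 3$ matrix, and identifies $\ceigen(\ell)$ as its dominant eigenvalue. You instead establish the closed-form norm identity $c_n\ell^{f_n}=(-1)^n c_0\ell^{f_0}\varphi_n\psi_n$ and bound the rational number $\varphi_n\psi_n=p_n^2-(\alpha+\alpha')p_nq_n+\alpha\alpha'q_n^2$ archimedeanly via Proposition~\ref{P.qnbound}; the intermediate steps (conjugating~\eqref{E.cq2}, collapsing $\psi_{n-1}\varphi_n-\varphi_{n-1}\psi_n$ to $(\alpha-\alpha')(-1)^n$ via~\eqref{E.convergents2}, and deducing the $b_n$ and $f_n$ bounds from $b_n^2=\Delta-C_{n-1}C_n$ and $\ell^{f_n}\le\abs{C_n}$) all check out. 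Your route is arguably more transparent: it explains structurally why the base is $\ceigen(\ell)=\lambda(\ell)^2$ (the square arising from the product of two convergent-sized factors), whereas the paper computes the eigenvalue of $M$ and only observes a posteriori that it is $<\ell^2+2$. The effective constants are essentially identical: since $\abs{\alpha+\alpha'}\,\abs{c_0\ell^{f_0}}=2\abs{b_0}$ and $\abs{\alpha\alpha'}\,\abs{c_0\ell^{f_0}}=\abs{\Delta-b_0^2}/\abs{c_0\ell^{f_0}}$, your constant $\abs{c_0\ell^{f_0}}\left(1+\abs{\alpha+\alpha'}+\abs{\alpha\alpha'}\right)$ equals $\abs{k_0}+2\abs{b_0}+\abs{k_{-1}}$ in the paper's notation, against the paper's $\abs{k_{-1}}+\abs{k_0}+\abs{b_0}$. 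Two small points worth making explicit: the conjugation argument requires $\delta\notin\Q$, which holds because $\Delta$ is a non-square; and Proposition~\ref{P.qnbound} is stated only for $n>1$, so the cases $n=0,1$ must be absorbed into the implied constant.
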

\begin{proof}
For this proof, we write $k_n:=\ell^{f_n}c_n\in\Z$ for all $n\ge 0$. Moreover, we define $k_{-1}:=\frac{\Delta-b_0^2}{\ell^{f_0}c_0}$. By the definition of $b_{n+1}$, we can write $b_{n+1}=a_n k_n-b_n$; then~\eqref{quadratic.recurrence} gives, for every $n\ge 0$,
\[
k_{n+1}=\frac{\Delta - {b_{n+1}}^2}{k_n}=\frac{\Delta - {b_{n}}^2}{k_n}+\frac{a_nk_n(2b_n-a_nk_n)}{k_n}=k_{n-1}+a_n(2b_n-a_nk_n).
\]
Now $0\leq a_n<\ell$ by construction, so that
\[
\abs{k_{n+1}}<\abs{k_{n-1}}+2\ell \abs{b_n}+\ell^2\abs{k_n},
\]
and
\[
\abs{b_{n+1}}<\abs{b_n}+\ell\abs{k_n}.
\]
If we define two recurrence sequences $A_n,B_n$ such that $A_{-1}=\abs{k_{-1}},$ $A_0=\abs{k_0},$  $B_0=\abs{b_0}$, and
\begin{align*}
\begin{cases}
A_{n+1}=\ell^2A_n+A_{n-1}+2\ell B_n, \\
B_{n+1}=\ell A_n+B_n,
\end{cases}
\end{align*}
then we have the estimates $\abs{k_n}\leq A_n$ and $\abs{b_n}\leq B_n$ for all $n\ge 0$.

Now we can argue as in Section~\ref{Bound.pq} and write 
\[
M=
\begin{pmatrix}
\ell^2&   1 & 2\ell\\
1 & 0 &0\\
\ell & 0 & 1
\end{pmatrix}, \qquad v_n:=\begin{pmatrix}A_n\\A_{n-1}\\B_n\end{pmatrix}, \qquad v_{n+1}=Mv_n.
\]
The biggest eigenvalue of $M$ is the number $\ceigen(\ell):=\frac{\ell^2+2+\ell\sqrt{\ell^2+4}}{2}< \ell^2+2$, and therefore
\[
A_n,B_n\leq| {v_n} |=|M^n v_0| \le \ceigen(\ell)^n |v_0|,
\]
where $|v_0|\le |k_{-1}|+|k_0|+|b_0|=:\ceigensnd(\alpha)$.
In particular, we have that
\begin{align*}
\abs{b_n}&\leq B_n\leq \ceigensnd(\alpha) \ceigen(\ell)^n,\\
\abs{c_n\ell^{f_n}}&\leq A_n\leq \ceigensnd(\alpha) \ceigen(\ell)^n,\\
f_n&\leq \frac{\log A_n}{\log\ell}\leq n \frac{\log \ceigen(\ell)}{\log \ell}+\frac{\log\ceigensnd(\alpha)}{\log \ell}<\frac{\log(3+2\sqrt{2})}{\log 2}n+\frac{\log\ceigensnd(\alpha)}{\log \ell} \\
&<3n+\ceigentrd(\alpha),
\end{align*}
where $\ceigentrd(\alpha)=\frac{\log\ceigensnd(\alpha)}{\log \ell}$.
\end{proof}


\subsection{Convergence in \texorpdfstring{$\R$}{R} of the Ruban continued fraction expansion of quadratic irrationals} We have briefly discussed convergence in $\R$ of a general RCF, and we have also proved directly in Proposition~\ref{T.ratconv} that for Ruban expansions of rational numbers, convergence in $\R$ always holds.
In this section we prove the same for the expansion of an arbitrary quadratic irrational number. 

\begin{prop}\label{T.quadconv} 
The Ruban continued fraction expansion of a quadratic irrational number always converges in $\R$.
\end{prop}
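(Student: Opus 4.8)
The goal is to show that for a quadratic irrational $\alpha\in\Q_\ell\setminus\Q$, the RCF expansion converges in $\R$. By Proposition~\ref{P.convergence}, this is equivalent to proving that $\sum_n a_n=\infty$, or equivalently that the real denominators $q_n$ are unbounded. Since all $a_n>0$ for $n>0$, the only way convergence in $\R$ can fail is if the partial quotients $a_n$ tend to $0$ fast enough that $\sum a_n<\infty$; so the plan is to establish a lower bound on the $a_n$ that prevents this.

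The plan is to exploit the explicit shape of the complete quotients developed in Section~\ref{S.special}. Recall $a_n=\lfloor\alpha_n\rfloor_\ell=r_n/\ell^{e_n}$, and that $a_n\le \ell-\ell^{-e_n}<\ell$, so the partial quotients are bounded above by $\ell$. Since $|a_n|_\ell=\ell^{e_n}$ with $e_n\ge 1$, we always have $a_n\ge \ell^{-e_n}\cdot(\text{numerator}\ge 1)$, but this alone is too weak because $e_n$ can grow. The key is instead to control $e_n$ (equivalently $f_n$), which measures how small $a_n$ can be. First I would invoke Proposition~\ref{P.bound.em}, which gives the crucial linear upper bound $f_n<3n+O_\alpha(1)$ on the exponents. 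Since $e_n=-v(\alpha_n)$ equals $f_n$ (or $f_n-1$ when $\ell=2$) under the normalized shape guaranteed by Proposition~\ref{Prop.Part.shape}, we obtain $e_n<3n+O_\alpha(1)$ as well.

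Now I would combine this with the lower bound $a_n\ge \ell^{-e_n}$, which always holds since the numerator $r_n$ of $a_n$ is a positive integer. This yields
\[
a_n\ge \ell^{-e_n}\ge \ell^{-3n-O_\alpha(1)}.
\]
At first glance this bound decays geometrically, so $\sum a_n$ need not diverge from this estimate alone; this is precisely where the main obstacle lies. To overcome it, rather than bounding each $a_n$ individually, I would track the growth of $q_n$ directly using the recurrence and the shape relations. A cleaner route uses~\eqref{E.approx2}: we have $v(p_n-\alpha q_n)=s_{n+1}-e_0$, and since $\alpha\notin\Q$ the quantities $\varphi_n=p_n-\alpha q_n$ never vanish. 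Pairing the $\ell$-adic smallness of $\varphi_n$ with an archimedean upper bound from Proposition~\ref{P.qnbound} (exactly as in the proof of part~(ii) of Theorem~\ref{T.terminating}) forces the $q_n$ to grow; the obstruction to boundedness is that a bounded nonzero rational cannot be $\ell$-adically divisible by an unbounded power of $\ell$. The hard part is ensuring this archimedean-versus-$\ell$-adic balance genuinely forces $\sum a_i=\infty$ rather than merely ruling out termination, and this is where the linear bound $f_n<3n+O_\alpha(1)$ from Proposition~\ref{P.bound.em} becomes essential: it guarantees $s_{n+1}=\sum_{i\le n}e_i$ grows at least linearly while the $a_i$ cannot collectively decay too fast.

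Concretely, I would argue by contradiction: suppose the $q_n$ are bounded. Then by Proposition~\ref{P.convergence} we have $\sum a_i<\infty$, so $a_i\to 0$, forcing $e_i\to\infty$. But the archimedean size of $\alpha_n=(b_n+\delta)/(\ell^{f_n}c_n)$ is controlled, via Proposition~\ref{P.bound.em}, by $|\alpha_n|\asymp |b_n+\delta|/|\ell^{f_n}c_n|$ in the \emph{real} embedding, and the bounds $|b_n|,|c_n\ell^{f_n}|\ll_\alpha \ceigen(\ell)^n$ together with $f_n<3n+O_\alpha(1)$ prevent the real image of $\alpha_n$ from decaying to zero at the required rate. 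Tracking $a_n$ as the real quantity $r_n/\ell^{e_n}$ and using that $r_n\ge 1$ is an integer, the contradiction emerges from the mismatch between the forced decay $a_n\to 0$ and the controlled growth of the defining data. I expect the bookkeeping relating the real size of $\alpha_n$ to $a_n$ — and carefully invoking the normalized shape of Proposition~\ref{Prop.Part.shape} so that $e_n$ and $f_n$ agree up to a bounded error — to be the most delicate step; once that is in place, divergence of $\sum a_i$, and hence convergence in $\R$, follows from Proposition~\ref{P.convergence}.
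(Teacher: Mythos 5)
Your setup is the same as the paper's: argue by contradiction via Proposition~\ref{P.convergence}, so that $\sum a_n<\infty$, hence $a_n\to 0$, hence $f_n\to\infty$ and therefore $|k_n|=|\ell^{f_n}c_n|\to\infty$. But the core of the argument --- why this situation is impossible --- is missing, and the two mechanisms you propose to supply it do not work. First, the route through $\varphi_n=p_n-\alpha q_n$ is the proof of Proposition~\ref{T.ratconv} for \emph{rational} $\alpha$, and it breaks down here: for irrational quadratic $\alpha$ the quantity $\varphi_n$ is an irrational element of $\Q(\alpha)$, so there is no integrality to exploit; a nonzero element of $\Q(\alpha)$ of bounded archimedean size can perfectly well have arbitrarily large $\ell$-adic valuation. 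Second, the appeal to Proposition~\ref{P.bound.em} goes in the wrong direction: that proposition gives \emph{upper} bounds $|b_n|,|c_n\ell^{f_n}|\ll \ceigen(\ell)^n$ and $f_n<3n+O_\alpha(1)$, which are entirely consistent with $a_n\ge \ell^{-e_n}$ decaying geometrically and $\sum a_n$ converging; nothing in those estimates ``prevents the real image of $\alpha_n$ from decaying,'' and the claimed ``mismatch'' is never quantified. You yourself note that $a_n\ge\ell^{-3n-O(1)}$ is too weak, but you never replace it with anything that actually forces divergence.

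What the paper does instead is extract much finer information from the recurrences~\eqref{quadratic.recurrence} under the standing assumption $|k_n|>\Delta$ (valid for large $n$ since $|k_n|\to\infty$): from $\Delta-b_{n+1}^2=k_nk_{n+1}$ one gets that the $k_n$ alternate in sign and $b_{n+1}^2=\Delta+|k_n||k_{n+1}|$, the $|b_n|$ satisfy $|b_{n+1}|-|b_n|=a_n|k_n|$, and after setting $\gamma_n:=\sqrt{|k_n|}$ a short computation yields the key inequality
\begin{equation*}
\gamma_{n+1}\le a_n\bigl(1+\sqrt{|\Delta|}\bigr)\gamma_n+\gamma_{n-1}.
\end{equation*}
This is a recurrence of exactly the type governing the $q_n$, so the convergence of $\sum a_n$ forces the $\gamma_n$ to be bounded (as in the last step of the proof of Proposition~\ref{P.convergence}), contradicting $|k_n|\to\infty$. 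This sign-alternation and $\sqrt{|k_n|}$-recurrence argument is the essential content of the proof, and it is absent from your proposal.
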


\begin{proof} We may assume the number $\alpha$  in question is of the form~\eqref{E.shape} 
and use the results of the previous section. As in the previous proposition, for this proof we write $k_n:=\ell^{f_n}c_n\in\Z$; then by~\eqref{quadratic.recurrence} we have 
\begin{equation} \label{eq.Rec}
    b_{n+1}+b_n=a_nk_n \quad \text{and} \quad \Delta-b_{n+1}^2=k_nk_{n+1}.
\end{equation} 

Assume by contradiction that the RCF expansion of $\alpha$ does not converge in the real topology. Then we have that $a_n\ge \ell^{-f_n}$ and we deduce from Proposition~\ref{P.convergence} that $f_n\to +\infty$ as $n\to +\infty$; moreover, since $k_n=\ell^{f_n} c_n$ and the $c_n$ are non-zero integers, this means that $|k_n|\rightarrow +\infty$, hence there exists $n_0 \ge 1$ such that $|k_n|> \Delta$ for all $n\ge n_0$. Since we may replace $\alpha$ with any partial quotient, we may assume that this happens for all $n\ge 0$. Formulae~\eqref{eq.Rec} above show that $k_n k_{n+1}<0$ and  $b_{n+1}^2=\Delta+|k_n||k_{n+1}|$. By shifting $n$ if necessary, we may assume that $k_n>0$ for even $n$.

From~\eqref{eq.Rec} for $n$ and $n+1$ in place of $n$, we have $b_{n+2}-b_n=a_{n+1}k_{n+1}-a_nk_n$, so the sequence of the $b_n$ is monotone increasing for odd  $n$ and monotone decreasing for even $n$; also, we find that, for instance for odd $n$,  $b_{n+2}=a_{n+1}|k_{n+1}|+a_n|k_n|+\ldots +a_1|k_1|+b_1$ and similarly for even $n$. We also find that eventually $b_n$ is positive (resp. negative) for odd (resp. even) $n$, and on shifting again we may assume this holds for all $n$. We further find 
\begin{equation} \label{eq1.R}
|b_{n+1}|-|b_n|=a_n|k_n|.
\end{equation}

Equation~\eqref{eq.Rec} also yields 
\begin{equation} \label{eq2.R}
(|b_{n+1}|-|b_n|)(|b_{n+1}|+|b_n|)=|k_n|(|k_{n+1}|-|k_{n-1}|).
\end{equation}

Set for this proof $\gamma_n:=\sqrt{|k_n|}$. From the equality  $b_{n+1}^2= |k_n| |k_{n+1}|+|\Delta|$, we derive  $|b_{n+1}|\le \gamma_n\gamma_{n+1}+\sd$, whence 
\begin{equation*}
|b_n|+|b_{n+1}|\le \gamma_n(\gamma_{n+1}+\gamma_{n-1})+2\sqrt{|\Delta|}.
\end{equation*}
Using this in~\eqref{eq2.R}, we find
\begin{equation*}
|k_n|(\abs{k_{n+1}}-|k_{n-1}|)\le 
(|b_{n+1}|-|b_n|)\left( \gamma_n(\gamma_{n+1}+\gamma_{n-1}) +2\sqrt{|\Delta|}\right).
\end{equation*}
On using~\eqref{eq1.R} and dividing by $|k_n|(\gamma_{n+1}+\gamma_{n-1})$, we have
\begin{equation*}
\gamma_{n+1}-\gamma_{n-1}\le a_n \left ( \gamma_n + \frac{2 \sd}{\gamma_{n+1}+ \gamma_n} \right ),
\end{equation*}
hence, as $\gamma_{n}\ge 1$ for all $n\ge 1$,
\begin{equation*}
\gamma_{n+1}\le a_n \left ( 1 + \sd\right )\gamma_n+\gamma_{n-1}.
\end{equation*}
Now, $\sum a_n$ converges by assumption; so, arguing as in the previous proposition, it easily  follows that $\gamma_n$ are bounded, a contradiction which proves the result. 
\end{proof}

This result allows to formulate the following (probably  difficult) \medskip

{\bf Conjecture}: {\it Consider the real limit  of the Ruban continued fraction expansion of a quadratic irrational. Then, either the expansion is eventually periodic  or the said limit  is transcendental.}

\medskip

Other problems concern the behaviour of the $e_n$ for a non periodic RCF expansion of a quadratic irrational. Can they tend to infinity? It will follow from the arguments at the end of the paper that they cannot be bounded.


\section{Purely periodic expansions}\label{S.purely}

  Before studying periodicity, let us focus on {\it pure periodicity}, \textit{i.e.} the case when an $\alpha\in\Q_\ell$ has a RCF expansion which is purely periodic. 
  
   If this happens, we have $\alpha=\alpha_k$ for some period $k>0$, and equation~\eqref{E.cq} implies that $\alpha$ satisfies a quadratic equation over $\Q$, precisely, 
\begin{equation}\label{E.quadratic}
q_k\alpha^2-(p_k-q_{k-1})\alpha-p_{k-1}=0.
\end{equation}

This equation holds for every period $k$, and so for any multiple of any given period. Using this, we can prove Theorem~\ref{T.periodic}.

\begin{proof} [Proof of Theorem~\ref{T.periodic}]  In view of Theorem~\ref{T.terminating}, we obtain that  the only rational number with a purely periodic RCF expansion is    $\alpha=\ell-\ell^{-1}$.
   
In all other cases of pure periodicity, equation~\eqref{E.quadratic} says that $\alpha$ is quadratic irrational. But in any case $\Q(\alpha)$ can be embedded in $\R$, since $k\geq 1$ and so $q_k,p_{k-1}>0$. 
   
Now, if $\alpha\in\Q_\ell$ has a(n eventually) periodic RCF expansion, then  some complete quotient has a purely periodic expansion, and the above conclusion applies. This proves Theorem~\ref{T.periodic}.
\end{proof}

Notice that, if $\alpha\in\Q_\ell\setminus\Q$ has a pure periodic continued fraction, it is then a quadratic irrational and by Proposition~\ref{Prop.Part.shape} we have that $\alpha$ has directly to satisfy the condition given by~\eqref{Part.shape}.
We can derive the following useful proposition:

\begin{prop}\label{P.ppp}
If $\alpha\in\Q_\ell\setminus\Q$ has a purely periodic Ruban continued fraction expansion, then there is precisely one embedding $j=j_\alpha:\Q(\alpha)\to\R$ such that  $j(\alpha_m)>0$ for all complete quotients $\alpha_m$.

Similarly, but relative to the $\ell$-adic valuation, we have $|\alpha|_\ell>1$ and  $|\alpha'|_\ell=|\alpha|_\ell^{-1}< 1$.
\end{prop}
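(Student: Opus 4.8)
The plan is to exploit the defining relation of pure periodicity, namely that $\alpha=\alpha_k$ for the period $k$, together with equation~\eqref{E.cq}. Since the expansion is purely periodic, writing the relation~\eqref{E.cq} at step $k$ gives
\[
\alpha=\frac{\alpha p_k+p_{k-1}}{\alpha q_k+q_{k-1}},
\]
which is precisely the quadratic~\eqref{E.quadratic}. The two roots of this quadratic are $\alpha$ and its conjugate $\alpha'$, so I would analyse the linear fractional map $\phi(x)=(p_kx+p_{k-1})/(q_kx+q_{k-1})$ and locate its two fixed points on the real line. The key observation is that all the $p_n,q_n$ are non-negative (indeed positive for the relevant indices), so $\phi$ maps the positive real axis into itself in a controlled way, and this will pin down the sign of the real images of $\alpha$ and $\alpha'$.

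First I would establish the real-embedding statement. Fix the embedding $j$ of $\Q(\alpha)$ into $\R$ sending $\alpha$ to the larger real root of~\eqref{E.quadratic}; this embedding exists because the discriminant is positive (the quadratic has two real roots, as $q_k,p_{k-1}>0$ forces the product of the roots to be negative, hence the roots are real and of opposite sign). Under $j$, I would show $j(\alpha)>0$ and $j(\alpha')<0$. Then, because the expansion is purely periodic, every complete quotient $\alpha_m$ is itself purely periodic with the same value-set cycling through $\alpha_0,\dots,\alpha_{k-1}$, and each $\alpha_m$ satisfies an analogous fixed-point equation with non-negative convergent data. The cleanest route is to use the relation $\alpha_m=[a_m,a_{m+1},\dots]_\ell$ as a real-convergent limit: by Proposition~\ref{P.convergence} and the discussion in Section~\ref{SS.r}, the real continued fraction $[a_m,a_{m+1},\dots]$ with positive partial quotients converges to a positive real number, and this real value is exactly $j(\alpha_m)$ since $j$ is a field embedding commuting with the algebraic relations~\eqref{E.cq}. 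Hence $j(\alpha_m)>0$ for all $m$. Uniqueness of such $j$ follows because the other embedding sends some $\alpha_m$ (in particular $\alpha$ itself) to a negative number.

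For the $\ell$-adic part, I would argue as follows. Since we arranged $|\alpha|_\ell\ge 1$ and $a_0\neq 0$, in fact $v(\alpha)=v(a_0)=-e_0<0$, giving $|\alpha|_\ell=\ell^{e_0}>1$. For the conjugate, I would use that $\alpha$ and $\alpha'$ are the two roots of $q_k\alpha^2-(p_k-q_{k-1})\alpha-p_{k-1}=0$, so by Vieta's formulas $\alpha\alpha'=-p_{k-1}/q_k$. Taking $\ell$-adic valuations and using the valuation data $v(p_{k-1})=-s_{k-1}$, $v(q_k)=e_0-s_k$ from Section~\ref{SS.r}, I would compute $v(\alpha\alpha')$ and compare with $v(\alpha)=-e_0$ to read off $v(\alpha')$, obtaining $|\alpha'|_\ell=|\alpha|_\ell^{-1}<1$.

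The main obstacle I anticipate is the rigorous justification that the real embedding $j$ sending $\alpha$ to the positive root makes \emph{every} complete quotient positive, not merely $\alpha$ itself. The subtle point is that $j$ is defined on the field $\Q(\alpha)=\Q(\alpha_m)$ and one must verify that its action on a different complete quotient $\alpha_m$ yields the \emph{positive} real root of $\alpha_m$'s own quadratic rather than the negative one. The convergence argument via Proposition~\ref{P.convergence} resolves this cleanly: the real limit of the tail continued fraction is manifestly positive and must coincide with $j(\alpha_m)$ because $j$ is continuous on $\Q(\alpha)$ with its unique archimedean topology and respects the finite algebraic relations defining $\alpha_m$ in terms of $\alpha$. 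Making this identification airtight — that the archimedean limit of the formal continued fraction equals the image under the chosen embedding — is where I would spend the most care.
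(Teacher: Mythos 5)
Your skeleton matches the paper's: both start from~\eqref{E.quadratic}, observe that the product of the real roots is $-p_{k-1}/q_k<0$ so that exactly one embedding makes $\alpha$ positive (and likewise each complete quotient, which is itself purely periodic), and both get the $\ell$-adic half from Vieta plus the valuation formulas for $p_{k-1},q_k$. Where you diverge is the step you yourself flag as delicate: showing that the \emph{same} embedding $j$ is positive on every $\alpha_m$. The paper does this with a two-line sign-propagation induction on the conjugate: from $\xi'=a_0+\xi_1'^{-1}$ and $\xi'<0$, $a_0>0$ one gets $\xi_1'<0$, hence $\xi_1>0$, and so on. You instead route through real convergence of the tails via Proposition~\ref{P.convergence}. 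That route can be made to work, but your stated justification for the key identification $L_m=j(\alpha_m)$ does not: $j$ being ``continuous on $\Q(\alpha)$'' is not usable here, because the convergents $p_n/q_n$ tend to $\alpha$ only in the $\ell$-adic topology; their real limit $L$ has no a priori relation to $j(\alpha)$. The correct argument is that periodicity forces $L$ to satisfy~\eqref{E.quadratic} (apply~\eqref{E.cq} at a period and pass to the real limit), and $L>0$, so $L$ is the positive root, i.e.\ $L=j(\alpha)$; then one propagates to the tails by induction on $L_{m+1}=(L_m-a_m)^{-1}$ versus $j(\alpha_{m+1})=(j(\alpha_m)-a_m)^{-1}$ --- which is essentially the paper's induction reappearing. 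So the detour through real convergence buys nothing and still needs the inductive step; the paper's direct sign argument is shorter and avoids invoking convergence at all.

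A second, smaller gap: you deduce $|\alpha|_\ell>1$ from ``$a_0\neq 0$, in fact $v(\alpha)=v(a_0)=-e_0<0$''. But $a_0\neq 0$ only gives $v(\alpha)\le 0$, i.e.\ $|\alpha|_\ell\ge 1$; a nonzero integral $a_0$ with $e_0=0$ is not excluded by that observation alone. The strict inequality needs pure periodicity: $\alpha=\alpha_k$ for some $k\ge 1$, and every complete quotient of index $\ge 1$ has negative valuation by construction. With these two repairs your argument is complete and agrees with the paper's conclusion.
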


\begin{proof}
To prove the first assertion, let $\alpha$ be as in the statement. From equation~\eqref{E.quadratic}, we have that there is precisely one real embedding $j:\Q(\alpha)\to\R$ such that $\xi:=j(\alpha)>0$ (in fact, the product $\xi \xi'=-p_{k-1}/q_k$ is negative) and we may repeat the argument for each complete quotient. We are left with the task of showing that $j$ is the same for all complete quotients, and, by induction, it suffices to show that $j$  works for the first complete quotient $\alpha_1$ as well, \textit{i.e.} that $\xi_1:=j(\alpha_1)>0$. 

We have $\alpha=a_0+\alpha_1^{-1}$, so $\xi=a_0+\xi_1^{-1}$. Taking conjugates, we have $\xi'=a_0+\xi_1'^{-1}$. Now, $\xi'<0$ so $\xi_1'=(\xi'-a_0)^{-1}<0$, and hence $\xi_1>0$, as wanted.

To prove the last assertion concerning the $\ell$-adic place, notice first that $|\alpha|_\ell>1$ because, by periodicity, $\alpha$ is equal to some other complete quotient, and every complete quotient after the first one has this property. Moreover,
equation~\eqref{E.quadratic} shows that $|\alpha\alpha'|_\ell=|p_{k-1}/q_k|_\ell$,
hence 
$$ |\alpha'|_\ell= |p_{k-1}/q_k|_\ell/ |\alpha|_\ell= \ell^{e_0+s_{k-1}-s_k-e_0}=\ell^{-e_{k-1}},$$
and the assertion follows since $e_{k-1}\geq 1$.
\end{proof}

\begin{rem}[\texttt{Dependence of the embedding}]\label{R.emb}  It is worth noting that, even if the quadratic field $K=\Q(\alpha)\subset \Q_\ell$ is given, in general the embedding $j_\alpha$ of the previous proposition is not uniquely determined by $K$, namely it may change if we take another quadratic $\beta\in K$ with a purely periodic RCF expansion. Here is an example of this behaviour.
   
Let $\delta\in\Q_7$ be the square root of $2$ such that $\delta\equiv 3\pmod 7$ and put $\alpha=\frac{1+5\delta}{7}$. Then  $\alpha$ is a root of $x^2-\frac{2}{7}x-1$, so $\alpha=\frac{2}{7}+\alpha^{-1}$. By our choice of $\delta$, we have $|\alpha|_7>1$, hence the previous equation produces the purely periodic expansion $\alpha=\left [\overline{\frac{2}{7}}\right ]_7$.  If we let $\sqrt 2$ denote the real positive square root of $2$, the embedding $\iota:\Q(\alpha)=\Q(\delta)\to\R$ such that $\iota(\delta)=\sqrt 2$ has $\iota(\alpha)>0$, hence is the (unique) embedding $j_\alpha$ predicted by the proposition for $\alpha$.
   
Let now $\beta:=\frac{17-13\delta}{7}$, so $\Q(\beta)=\Q(\alpha)$. This $\beta$ is a zero of $x^2-\frac{34}{7}x-1$, and since again $|\beta|_7>1$ (note that  $17-13\cdot 3\equiv -1\pmod 7$) we have $\beta= \left [\overline{\frac{34}{7}} \right ]_7$. On the other hand, $\iota(\beta)=\frac{17-13\sqrt 2}{7}<0$.  Hence the embedding $j_\beta$ of the proposition relative to $\beta$ in place of $\alpha$ is $\iota_{-}$, where with $\iota_-$ we denote the other real embedding of $\Q(\beta)$ into $\R$, \textit{i.e.} the one such that $\iota_-(\delta)=-\sqrt 2$. 
   \end{rem}
   
\begin{example}[\texttt{Purely periodic RCF of period 1 in a quadratic field}]\ \\
Fix $\ell$ a prime number and let $\delta \in \Q_{\ell}\setminus \Q$ be the square root of a positive integer $\Delta$.
We want to compute the pure periodic RCF's of $\Q(\delta)$, \textit{i.e.} RCF of the form $\left [ \overline{\frac{t}{\ell^h}} \right ]_{\ell}$ with $h>0$ and $0\le t < \ell^{h+1}$. If $x=\left [\overline{\frac{t}{\ell^h}} \right]_{\ell}$, this means that $x$ is the solution of the equation $x^2-\frac{t}{\ell^h}x-1=0$ with $|x|_{\ell}>1$, \textit{i.e.} $x$ is of the form $\frac{t\pm\sqrt{t^2+4\ell^{2h}}}{2\ell^{h}}$. In order to have that $x\in \Q(\delta)$, we must impose that $t^2+4\ell^{2h}=u^2\Delta$ for some $u \in \Z$. Hence, to generate all pure periodic RCF's of period 1 of $\Q(\delta)$, we have to solve the generalized Pell equations 
   \[
   t^2-u^2\Delta= -4\ell^{2h}, \quad \mbox{with $0\le t<\ell^{h+1}$}.
   \]
   We give an explicit example.  Take $\ell=3$ and let $\delta\in \Q_3$ be the square root of $\Delta=10$ which is congruent to $1$ modulo $3$. Let us calculate the first solutions of the previous equation:
\begin{itemize}
     \item for $h=1$, we have $t=1$ and $u=1$, hence $x_1=\frac{1+\delta}{3}=\left [\overline{\frac{2}{3}}    \right ]_3$;
     \item for $h=2$, we have $t=13$ and $u=5$, hence $x_2=\frac{13-5\delta}{9}=\left [ \overline{\frac{26}{9}} \right]_3$;
     \item for $h=3$, we have $t=31$ and $u=13$, hence $x_3=\frac{31+13\delta}{27}=\left [\overline{\frac{62}{27}} \right ]_3$;
     \item for $h=4$, we have $t=43$ and $u=29$, hence $x_4=\frac{43-29\delta}{81}=\left [\overline{\frac{86}{81}} \right ]_3$;
     \item for $h=5$, no solutions;
     \item for $h=6$, we have two solutions, giving $x_5=\frac{881+289\delta}{3^6}=\left [ \overline{\frac{1762}{3^6}} \right]_3$ and $x_6=\frac{601-205\delta}{3^6}=\left [ \overline{\frac{1202}{3^6}} \right ]_3$.
   \end{itemize}
   
   Now, let $\sqrt{10}$ be the real positive square root of 10, and denote by $\iota$ the embedding $\iota:\Q(\delta)\to\R$ such that $\iota(\delta)=\sqrt{10}$. We easily see that, in the previous examples, $\iota(x_1), \iota(x_3)$ and $\iota(x_5)>0$, while $\iota_-(x_2), \iota_-(x_4)$ and $\iota_-(x_6)>0$ (where $\iota_-$ denotes the other real embedding of $\Q(\delta)$ in $\R$). This gives another evidence that the embedding of the proposition does not depend only on the field $\Q(\delta)$.
   
\end{example}


\subsection{Finiteness of purely periodic expansions with a given shape}\label{SS.shape}
   
We want now to study the quadratic irrationals with a given ordinate which have purely periodic continued fractions.
   
First of all, as observed before, if $\alpha \in \Q_\ell \setminus \Q$ has a purely periodic RCF expansion, it is a quadratic irrational satisfying conditions~\eqref{Part.shape}. So, let us consider $\alpha$ of the form
   \begin{equation}\label{E.shape2}
      \frac{b_0+\delta}{\ell^{f_0} c_0},\qquad \ell \nmid \delta^2=\Delta\in\Z_{>0},\quad \ell \nmid  \, c_0\in\Z,\quad f_0\in\Z_{>0},
   \end{equation}
where $\Delta>0$ since by Proposition~\ref{P.ppp} $\Q(\alpha)$ embeds in $\R$.
Moreover, we have that $\vl(\alpha)=-f_0<0$ if $\ell$ is odd and $\vl(\alpha)=1-f_0<0$ if $\ell=2$.
\medskip
   
We know that, by Section~\ref{S.special}, every complete quotient of the RCF expansion of $\alpha$ has the form 
\[
 \alpha_m=\frac{b_m+\delta}{\ell^{f_m}c_m},
\] 
with $b_m, c_m, f_m$ satisfying the conditions~\eqref{Part.shape}. Denote by $\xi_m$ and $\xi_m'$ the two real embeddings of $\alpha_m$. Using Proposition~\ref{P.ppp}, we have that $\xi_m \xi_m' <0$ for all $m \ge 0$. But if we compute $\xi_m\xi_m'$, we have that 
\[
\xi_m\xi_m'= \frac{b_m^2 - \Delta}{\ell^{2f_m}c_0^2} <0,
\]
which implies that $b_m^2 < \Delta$ for all $m \ge 0$. In particular, we proved that $|b_m|< \sqrt{\Delta}$, so the sequence $|b_m|$ is bounded.
   
Now, by~\eqref{quadratic.recurrence} and the fact that $|b_m|$ is bounded, we have that 
\begin{equation} \label{E.estimate}
     \ell^{f_m+f_{m+1}} |c_m c_{m+1}| \le \Delta.  
\end{equation}
   
   In turn, for given $\Delta$, this implies in particular that $f_m$  and $|c_m|$ are bounded, so they have a finite number of possibilities  in terms of $\Delta$. As the $|b_m|$ are bounded too, we get a finite list of numbers of the form~\eqref{E.shape2}, with a given ordinate, which satisfy the necessary condition \eqref{E.estimate} to be purely periodic.
   
\medskip
  
  Once we have this finite list {\it containing} those $\alpha$ satisfying \eqref{E.estimate}, we can effectively determine the precise list of the purely periodic ones. For this, given $\beta$ in the list, it suffices to compute more complete quotients of $\beta$  than the cardinality of the list: either we find some complete quotient out of the list (and then we rule out $\beta$ itself) or we must find a repetition, which would provide the full period. We state all of this in a proposition:
  
\begin{prop} \label{P.shape}
For a given non-square integer $\Delta>0$, there are only finitely many $\alpha\in\Q_\ell$ with ordinate $\Delta$ such that the RCF expansion of $\alpha$ is purely periodic, and these numbers may be effectively determined.
\end{prop}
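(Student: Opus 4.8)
The plan is to extract from the preceding discussion a finite \emph{search space} of candidates and then certify periodicity by direct computation. The crucial input is already in hand: by Proposition~\ref{P.ppp}, if $\alpha\in\Q_\ell\setminus\Q$ is purely periodic then every complete quotient $\alpha_m=\frac{b_m+\delta}{\ell^{f_m}c_m}$ satisfies $\xi_m\xi_m'<0$, which forces $b_m^2<\Delta$. First I would record that this bounds $|b_m|<\sqrt{\Delta}$ uniformly in $m$. Feeding this bound into the second recurrence of~\eqref{quadratic.recurrence} yields the inequality~\eqref{E.estimate}, namely $\ell^{f_m+f_{m+1}}|c_mc_{m+1}|\le\Delta$, and since $f_m\ge 1$ and $c_m\in\Z$ is nonzero, both $f_m$ and $|c_m|$ are bounded in terms of $\Delta$ and $\ell$ alone.

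The combination of these three bounds is the heart of the argument: the triples $(b_m,c_m,f_m)$ describing the complete quotients of \emph{any} purely periodic $\alpha$ with ordinate $\Delta$ lie in an explicit finite set $S$, whose cardinality is bounded effectively in terms of $\Delta$ and $\ell$. In particular there are only finitely many candidate starting values $\alpha$ of the form~\eqref{E.shape2} with ordinate $\Delta$, proving the finiteness assertion outright. The decisive point is that purely periodic expansions must have \emph{all} their complete quotients inside $S$, so the whole orbit stays trapped in a finite set.

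For the effectivity claim I would then describe the certification procedure: given any candidate $\beta$ from the finite list, run the RCF algorithm and compute its successive complete quotients, each of which, if it stays in the purely periodic regime, must again land in $S$. Running the algorithm for at most $|S|+1$ steps produces a dichotomy by the pigeonhole principle: either some complete quotient fails the shape condition~\eqref{Part.shape} (equivalently, falls outside $S$, e.g. because $\xi_m\xi_m'$ turns out to be positive), in which case $\beta$ is not purely periodic and is discarded, or a repetition $\alpha_i=\alpha_j$ occurs, which by~\eqref{E.cq} exhibits an explicit period and certifies pure periodicity. This settles the effective determination of the precise sublist.

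I do not anticipate a genuine obstacle here, since all the hard analytic work was done in establishing Propositions~\ref{Prop.Part.shape}, \ref{P.bound.em} and~\ref{P.ppp}; the only point demanding slight care is the bookkeeping that guarantees the orbit of a purely periodic $\beta$ never leaves $S$, so that a repetition is \emph{forced} within $|S|+1$ steps. The remaining work is to make the cardinality bound on $S$ concrete, which is a routine counting of the admissible $(b_m,c_m,f_m)$ subject to $b_m^2<\Delta$ and~\eqref{E.estimate}; I would present this as a short explicit estimate rather than grinding through the exact constant.
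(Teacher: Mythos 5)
Your proposal is correct and follows essentially the same route as the paper: bound $|b_m|<\sqrt{\Delta}$ via Proposition~\ref{P.ppp}, deduce~\eqref{E.estimate} to bound $f_m$ and $|c_m|$, obtain a finite candidate list, and certify each candidate by computing more complete quotients than the list's cardinality until one either leaves the list or repeats. No substantive difference from the paper's argument.
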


\begin{rem}[\texttt{Practical computations and an example}]\label{R.practical}
For practical computations, once we form a list according to the above inequalities, we may often shorten it by applying Proposition~\ref{P.ppp}, possibly also looking at the first complete quotient (on looking both at the real and at the $\ell$-adic valuation): if any conjugate has either positive real value or $\ell$-adic absolute value $\ge 1$, we may eliminate that number from the list. 
  
Let us give here an example, taking $\ell=3$ and $\delta$ the square root of $\Delta=13$ in $\Q_3$  which is congruent to $1$ modulo $3$ in $\Q_3$. Let us compute the list of the $\alpha\in\Q_3$ with ordinate $13$ (\textit{i.e.} of the form $\frac{b_0+\delta}{3^{e_0} c_0}$) which have a purely periodic RCF expansion. 
  
 Using~\eqref{E.estimate}, we have, for all $m\ge 1$, 
$3^{f_m+f_{m-1}} |c_mc_{m-1}| \le 13$. 
Since $f_h\ge 1$ for all $h$, this immediately implies that, for all $m\ge 0$, $f_m=1$ and $c_m=\pm 1$; also, $|b_m| < \sqrt{13}$. Hence the possible elements of the previous shape having a purely periodic continued fractions are among the following fourteen ones: $ \pm\frac{\delta}{3}, \   \pm\frac{\pm 1+\delta}{3},\  \pm\frac{\pm 2+\delta}{3},\  \pm\frac{\pm 3+\delta}{3}$.

Now, by Proposition~\ref{P.ppp} we must have $|\alpha|_{3}>1$ and  $|\alpha'|_{3}<1$; hence we easily see that the only two {\it a priori} possible cases for a purely periodic continued fraction with ordinate $13$ are $ \pm\frac{-2+\delta}{3}$.  We now find that the first complete quotient of $\frac{-2+\delta}{3}$ is $-\frac{7+\delta}{12}$, which does not belong to the list. Then  the minus sign must occur, so we remain with $\alpha=\frac{2-\delta}{3}$. 
    This satisfies $\alpha^2-\frac{4}{3}\alpha-1=0$, so $\alpha=\frac{4}{3}+\frac{1}{\alpha}$, leading indeed to the purely periodic RCF expansion $\alpha=\left [\overline{\frac{4}{3}} \right ]_3$.
\end{rem}


\section{A general criterion for periodicity}\label{S.periodic} 

In this section, we are going to prove a necessary and sufficient criterion to decide whether a quadratic irrational $\alpha$ has a periodic RCF expansion or not.

\subsection{An explicit example}
We begin by showing a concrete example involving the square root of $13$ in $\Q_3$.

\begin{prop}\label{T.eta} Let us denote by $\delta\in\Q_3$ the square root of $13$ which is congruent to $1$ modulo $3$. Then, the RCF expansion of $\delta$ is not periodic.
\end{prop}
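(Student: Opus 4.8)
The plan is to use the necessary condition for periodicity established via Proposition~\ref{P.ppp}: if the RCF expansion of $\delta$ were periodic, then some complete quotient $\alpha_m$ would be purely periodic, and hence there would exist a \emph{single} real embedding $j:\Q(\delta)\to\R$ under which \emph{all} subsequent complete quotients are positive. Equivalently, by the computation $\xi_m\xi_m'=(b_m^2-\Delta)/(\ell^{2f_m}c_0^2)$ carried out in Section~\ref{SS.shape}, purely periodicity forces $b_m^2<\Delta=13$ for all $m$ beyond some point; combined with the bound from~\eqref{E.estimate} this pins the tail of the expansion into the explicit finite list of candidate complete quotients with ordinate $13$ computed in Remark~\ref{R.practical}. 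So the strategy is to show that the actual expansion of $\delta$ never enters (and stays inside) that finite list.

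First I would fix the representation $\delta=\delta$, $\alpha_0=\delta$ with $\Delta=13$, and recall from Remark~\ref{R.practical} that the \emph{only} number of ordinate $13$ admitting a purely periodic RCF expansion in $\Q_3$ is $\alpha=\tfrac{2-\delta}{3}=\left[\overline{\tfrac{4}{3}}\right]_3$, whose unique periodic complete quotient is itself. Therefore, \emph{if} the expansion of $\delta$ were eventually periodic, its periodic part would have to coincide with this purely periodic tail, i.e.\ some complete quotient $\alpha_m$ would equal $\tfrac{2-\delta}{3}$ (up to the automatic shape normalisation of Proposition~\ref{Prop.Part.shape}), and from that index on every partial quotient would be $\tfrac{4}{3}$ and every complete quotient would equal $\tfrac{2-\delta}{3}$.

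The concrete step is then to rule this out by a direct obstruction on the sequences $(b_m,c_m,f_m)$ governed by the recurrence~\eqref{quadratic.recurrence}. Since $f_m=1$ and $c_m=\pm1$ are forced for \emph{all} $m$ in the periodic tail (by~\eqref{E.estimate} with $\Delta=13$), and $|b_m|<\sqrt{13}$, the tail would have to sit inside the explicit finite list of fourteen shapes, with the unique purely periodic survivor $\tfrac{2-\delta}{3}$, i.e.\ $(b_m,c_m,f_m)=(2,-1,1)$ reproducing itself. I would verify that the recurrence at $(b,c,f)=(2,-1,1)$ does regenerate itself (consistent with pure periodicity of $\tfrac{2-\delta}{3}$), but that the actual forward orbit starting from $\alpha_0=\delta$ never reaches the value $(2,-1,1)$: computing the first few complete quotients explicitly (as begun in Example~\ref{ex1}, where $\delta=[1,\tfrac{5}{9},\tfrac{16}{9},\tfrac{7}{3},\dots]_3$) shows the $c_m$ and $f_m$ repeatedly exceed the bounds $c_m=\pm1,\,f_m=1$ that the periodic list demands. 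In particular one exhibits a complete quotient $\alpha_m$ with $|c_m|>1$ or $f_m>1$, hence of ordinate-$13$ shape lying outside the finite candidate list; since the shape data are preserved by~\eqref{quadratic.recurrence} and the list is closed under the expansion among genuinely periodic elements, this single violation propagates and prevents the orbit from ever being trapped in the purely periodic cycle.

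The main obstacle I anticipate is not conceptual but bookkeeping: one must be careful that the finiteness list of Remark~\ref{R.practical} really captures \emph{all} possible periodic tails, i.e.\ that the necessary conditions~\eqref{Part.shape} and~\eqref{E.estimate} are genuinely forced at every step of a hypothetical period (this is exactly what Proposition~\ref{Prop.Part.shape}(i) guarantees, so invoking it cleanly is the key move), and that the shape normalisation does not secretly allow $\delta$ to match $\tfrac{2-\delta}{3}$ under a different representation with the same ordinate. The cleanest route, which I expect the authors to take, is therefore: assume periodicity, deduce a purely periodic complete quotient, identify it with $\tfrac{2-\delta}{3}$ via Proposition~\ref{P.shape} and Remark~\ref{R.practical}, and then derive a contradiction by tracking the integer data $(b_m,c_m,f_m)$ of the genuine orbit of $\delta$ far enough to exhibit an admissible-shape violation of the constraints $f_m=1,\ |c_m|=1$.
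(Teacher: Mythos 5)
Your first half matches the paper exactly: assume periodicity, pass to a purely periodic complete quotient, and use Remark~\ref{R.practical} to identify it with the unique purely periodic number of ordinate $13$, namely $\tfrac{2-\delta}{3}=\left[\overline{\tfrac{4}{3}}\right]_3$. The gap is in your closing mechanism. You propose to compute the first few complete quotients of $\delta$, observe that their shape data $(b_m,c_m,f_m)$ violate the constraints $f_m=1$, $|c_m|=1$, and then claim that ``this single violation propagates and prevents the orbit from ever being trapped in the purely periodic cycle.'' That propagation claim is false in the direction you need it. Forward-closure of the set of purely periodic elements under the expansion tells you that if $\alpha_m$ is purely periodic then so is $\alpha_{m+1}$; contrapositively, finding $\alpha_m$ outside the candidate list only shows that $\alpha_0,\dots,\alpha_m$ are \emph{not} purely periodic. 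It says nothing about $\alpha_{m+1},\alpha_{m+2},\dots$, and eventual periodicity requires only that \emph{some} tail be purely periodic. The recurrence~\eqref{quadratic.recurrence} has no monotonicity forcing $|c_m|$ or $f_m$ to stay large, so no finite computation of the orbit can rule out a later return to $(b,c,f)=(2,-1,1)$. You would need either an invariant separating the orbit of $\delta$ from $\tfrac{2-\delta}{3}$, or an effective bound on where the period must begin --- and you supply neither.

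The paper closes the argument differently, and this is the step you are missing. If $\alpha_n=\tfrac{2-\delta}{3}$ for \emph{some} $n$, then~\eqref{E.cq} gives
\[
\delta=\frac{-p_n\delta+3p_{n-1}+2p_n}{-q_n\delta+3q_{n-1}+2q_n},
\]
and applying the real embedding $j:\delta\mapsto -\sqrt{13}$ yields
\[
-\sqrt{13}=\frac{p_n\sqrt{13}+3p_{n-1}+2p_n}{q_n\sqrt{13}+3q_{n-1}+2q_n},
\]
whose right-hand side is positive because all convergents $p_i,q_i$ of a RCF are non-negative. This is a contradiction valid for every $n$ simultaneously, which is exactly what a finite orbit computation cannot deliver. (It is the same ``unique positive real embedding'' obstruction you cite from Proposition~\ref{P.ppp} at the start of your plan; you should deploy it here rather than abandoning it for the bookkeeping argument.)
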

            
\begin{proof}  Suppose by contradiction that the RCF expansion of $\delta$ is periodic; then, there will be some complete quotient of the expansion which is purely periodic. As discussed at the beginning of Section~\ref{S.special}, every complete quotient in the RCF expansion of $\delta$ will have ordinate $13$, and we proved in Remark~\ref{R.practical} that the only quadratic irrational number with ordinate $13$ which has purely periodic RCF expansion is $\frac{2-\delta}{3}=\left [\overline{\frac{4}{3}} \right ]_3$. This means that there exist $n>0$ and $a_0, \ldots, a_{n-1}\in \Z\left [ \frac{1}{3}\right ]$ such that $\delta=\left [a_0, \ldots, a_{n-1}, \overline{\frac{4}{3}} \right ]_3$. From~\eqref{E.cq}, we have that 
\begin{equation*}
   \delta= \frac{\alpha p_n+ p_{n-1}}{\alpha q_n+ q_{n-1}},
\end{equation*}
with $\alpha=\frac{2-\delta}{3}$, hence, on substituting for $\alpha$, we get 
\begin{equation*}
   \delta=\frac{-p_n\delta+3p_{n-1}+2p_n}{-q_n\delta+3q_{n-1}+2q_n}.
\end{equation*}
If we now denote by $\xi$ the positive real square root of $13$ and by $j:\Q(\delta) \rightarrow \R$ the real embedding sending $\delta$ to $-\xi$, we have that
\begin{equation*}
    -\xi=j(\delta)=\frac{p_n\xi+3p_{n-1}+2p_n}{q_n\xi+3q_{n-1}+2q_n},
\end{equation*}
which gives a contradiction as the right member of the equality is positive while $-\xi<0$. 
\end{proof}

\begin{rem}\label{R.-delta} This method naturally works much more generally. For instance, let us see that $-\delta$ does not have a periodic $3$-adic continued fraction either. The argument of Proposition~\ref{T.eta} does not apply directly, but we can compute a few more complete quotients and try to repeat the argument with those numbers.

We have
\[
   -\delta=2+(-2-\delta)=2+\frac{-9}{-2+\delta},
\]
hence the first complete quotient is $(2-\delta)/9$.
The second complete quotient is found to be equal to $(-11+\delta)/12$. Now, if $-\delta$ has a RCF expansion which is periodic, the same would hold for all complete quotients, in particular for $(-11+\delta)/12$. But now we can perform the same argument as in Proposition~\ref{T.eta} (replacing $\delta$ with $(-11+\delta)/12$) and we obtain a contradiction.
\end{rem}

\begin{rem}
The proofs of Proposition~\ref{T.eta} and of Remark~\ref{R.-delta} ultimately rely on the independence of the real and $3$-adic topologies.
  
It is also possible to give another argument for the non-periodicity of the RCF expansion of $-\delta$ which avoids any reference to the convergence in the reals. The idea is to examine the RCF at the point where the periodic part begins and to seek a contradiction using a congruence argument.     
   
Repeating the argument used in Proposition~\ref{T.eta} for the RCF expansion of $-\delta$, we find that 
\begin{equation}\label{eta3}
  -\delta=\frac{-p_n\delta+3p_{n-1}+2p_n}{-q_n\delta+3q_{n-1}+2q_n}
\end{equation}
for some $n>0$.
We may also assume that $n$ is the minimal integer for which this holds. Comparing rational and irrational parts, this equation yields
\begin{equation}\label{E.congr}
3p_{n-1}+2p_n=13q_n,\qquad 3q_{n-1}+2q_n=p_n,
\end{equation}
and, eliminating $p_n$, we have that
\[
   3q_n=2q_{n-1}+p_{n-1}.
\]
From this equation we can clear the denominators and obtain $ 3\tilde q_n=3^{e_{n-1}}(2\tilde q_{n-1}+\tilde p_{n-1}),$ which implies that $e_{n-1}=1$ and that 
\[
  \tilde q_n=2\tilde q_{n-1}+\tilde p_{n-1},
\] 
while clearing denominators from~\eqref{E.congr} we have that $2\tilde q_n\equiv \tilde p_n\pmod 9$, and clearing denominators from $p_nq_{n-1}-p_{n-1}q_n=(-1)^{n}$ yields $\tilde q_n\tilde p_{n-1}\equiv \tilde p_n\tilde q_{n-1}\pmod 9$.
  
Hence  $\tilde q_n\tilde p_{n-1}\equiv 2\tilde q_n\tilde q_{n-1}$, whence $ 2\tilde q_{n-1}\equiv \tilde p_{n-1}\pmod 9.$
This implies that $\tilde q_n\equiv 4\tilde q_{n-1}\pmod 9$, which easily gives $a_{n-1}=4/3$. But this implies $\alpha_{n-1}=\left [\overline{\frac{4}{3}} \right ]_3$, which contradicts our choice of the index $n$ as the smallest for which~\eqref{eta3} holds.
\end{rem}


\subsection{The general result}

In this section we give a general effective criterion to decide whether the RCF expansion of a quadratic irrational is periodic or not.

Let us consider a quadratic irrational $\alpha \in \Q_\ell \setminus \Q$. We can assume by Theorem~\ref{T.periodic} that $\Q(\alpha)$ can be embedded in $\R$. As argued before we can take it of the form 
\begin{equation*}
\alpha= \frac{b_0+ \delta}{\ell^{f_0}c_0}, 
\end{equation*}
with $b_0, c_0, \delta^2=\Delta>0$ and $f_0$ satisfying conditions~\eqref{first.shape}. As discussed before, all complete quotients will be of the form $\alpha_m=\frac{b_m+\delta}{\ell^{f_m}c_m}$, where $b_m,\ c_m$ and $f_m$ are integers defined by the recurrence formulae~\eqref{quadratic.recurrence}.

We saw in a previous example how to deduce that a certain RCF is not periodic, by looking at the sign of a suitable complete quotient and considering its real embeddings. The sign of the real embeddings is related to the size of the quantities $b_m$ (more precisely, to the sign of $\Delta-b_m^2$). We now show that the periodicity of the RCF expansion is related to the boundedness of the $b_m$.

\begin{prop}
   Let $\alpha\in \Q_{\ell}$ be quadratic over $\Q$. Then, its Ruban continued fraction expansion is periodic if and only if the sequence $\{\abs{b_n}\}_{n}$ is bounded from above.
\end{prop}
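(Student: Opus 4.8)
My plan is to reduce periodicity to the finiteness of the set of complete quotients, and to control that set through the data $(b_n,f_n,c_n)$ in the representation $\alpha_n=\frac{b_n+\delta}{\ell^{f_n}c_n}$ governed by the recurrence~\eqref{quadratic.recurrence}. The whole statement will then amount to the assertion that $\{|b_n|\}_n$ is bounded exactly when these triples range over a finite set.

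For the forward implication I would argue that, if the expansion is eventually periodic, the complete quotients $\alpha_n$ take only finitely many values; since each triple $(b_{n+1},f_{n+1},c_{n+1})$ is produced from the previous one by the deterministic rule~\eqref{quadratic.recurrence}, the integers $b_n$ are themselves eventually periodic, so $\{|b_n|\}_n$ is finite and in particular bounded. One may also invoke directly the computation of Section~\ref{SS.shape}, which bounds $|b_n|<\sqrt{\Delta}$ along any purely periodic tail, the preperiodic part contributing only finitely many further values.

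The substantive direction is the converse, and here the key step I would carry out is to propagate a bound $|b_n|\le B$ to the remaining data. Since $\alpha$ is quadratic irrational, $\Delta=\delta^2$ is not a perfect square, so $\Delta-b_{n+1}^2\neq 0$ for every $n$, and the norm relation in~\eqref{quadratic.recurrence},
\[
\ell^{f_n+f_{n+1}}c_nc_{n+1}=\Delta-b_{n+1}^2,
\]
has a nonzero left-hand side of absolute value at most $|\Delta|+B^2$. Using $f_n\ge 1$ for $n\ge 1$ and the fact that the $c_n$ are nonzero integers, I would read off
\[
\ell^{f_n+f_{n+1}}\le |\Delta|+B^2\qquad\text{and}\qquad |c_n|\le|c_nc_{n+1}|\le|\Delta|+B^2,
\]
so that $f_n$ and $|c_n|$ are bounded independently of $n$.

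It then follows that the triples $(b_n,f_n,c_n)$ lie in a fixed finite set, so the complete quotients take only finitely many values; by the pigeonhole principle $\alpha_{n_1}=\alpha_{n_2}$ for some $n_1<n_2$, and the determinism of the RCF algorithm (each $\alpha_{m+1}$ being a function of $\alpha_m$) forces $\alpha_{n_1+j}=\alpha_{n_2+j}$ for all $j\ge 0$, i.e.\ periodicity. I expect the only real obstacle to be exactly this passage from a bound on the single sequence $|b_n|$ to simultaneous bounds on $f_n$ and $|c_n|$; once the norm relation~\eqref{quadratic.recurrence} is exploited as above, finiteness of the complete quotients, and hence periodicity, is formal.
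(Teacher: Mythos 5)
Your proposal is correct and follows essentially the same route as the paper: the forward direction via finiteness of the complete quotients, and the converse by feeding the bound on $|b_n|$ into the norm relation $\ell^{f_n+f_{n+1}}c_nc_{n+1}=\Delta-b_{n+1}^2$ to bound $f_n$ and $|c_n|$, then concluding by pigeonhole and the determinism of the expansion. The only cosmetic difference is that you justify $f_n\ge 1$ directly from the recurrence while the paper cites Proposition~\ref{Prop.Part.shape}; both are valid.
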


\begin{proof}
Assume first that $\alpha$ has a periodic RCF expansion. Then, there exist $M_0\ge 0$ and $k>0$ such that for every $m\ge M_0$, $\alpha_{m+k}=\alpha_m$. This means that we have a finite number of complete quotients $\{\alpha, \alpha_1, \ldots, \alpha_{M_0+k-1}\}$. But every $\alpha_n$ has the form $\frac{b_n+\delta}{\ell^{f_n}c_n}$ for all $n=0, \ldots, M_0+k-1$, and the $b_n$ can assume only finitely many values, hence the sequence $\{|b_n|\}_n$ is bounded as required.

Conversely, assume that the sequence $\{|b_n|\}_n$ is bounded; as they are integers, they can assume only finitely many values. Moreover, from~\eqref{quadratic.recurrence}, we have that, for all $n\ge 0$, 
$$ \ell^{f_n+f_{n+1}} c_n c_{n+1} = \Delta- b_{n+1}^2. $$
The $c_n$ are non-zero integers and, by Proposition~\ref{Prop.Part.shape}, after a finite number of steps, also the $f_n$ are all positive; therefore, for all $n\ge n_0$, we have $|\ell^{f_n}c_n|\le \abs{\Delta- b_{n+1}^2} $. This implies that there exists a finite number of possibilities for the $f_n$ and the $c_n$. But now as  $\alpha_n = \frac{b_n+\delta}{\ell^{f_n}c_n}$, all complete quotients of $\alpha$ vary among a finite number of possibilities. Hence there exist some $N,M\ge 0$ such that $\alpha_N=\alpha_M$, which implies that the RCF expansion of $\alpha$ is periodic.
\end{proof}

The previous proposition gives a necessary and sufficient condition for deciding whether a RCF expansion is periodic or not, but it does not give an effective criterion. In the following theorem we give an effective bound on the number of complete quotients we have to compute in order to decide on periodicity.

\begin{thm}\label{general_condition}
Let $\alpha\in\Q_\ell$ be a quadratic irrational over $\Q$. 

If the Ruban continued fraction expansion of $\alpha$ is eventually periodic, then there is a unique  embedding $j:\Q(\alpha)\to\R$ such that $j(\alpha_n)>0$ for every complete quotient $\alpha_n$ of $\alpha$; moreover, the  limit  in $\R$  of the same continued fraction is  a root of the same quadratic equation satisfied by $\alpha$, and in fact is $j(\alpha)$. 

Conversely, there is a computable integer $N=N_\alpha>0$ such that, if for every $n\le N$ there is an embedding $j_n:\Q(\alpha)\to \R$ with $j_n(\alpha_n)>0$, then the Ruban continued fraction expansion of $\alpha$ is periodic.
\end{thm}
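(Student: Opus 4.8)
The plan is to prove the two directions separately, beginning with the easier forward implication and then developing the quantitative converse, which is where the real work lies.

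\medskip

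\textbf{Forward direction.} Suppose the expansion is eventually periodic. Then some complete quotient $\alpha_m$ is purely periodic, and Proposition~\ref{P.ppp} provides a unique embedding $j_m:\Q(\alpha_m)\to\R$ making $j_m$ positive on all complete quotients from index $m$ onward. I would propagate this embedding backwards through the finitely many preperiodic steps using the relation $\alpha_n=a_n+\alpha_{n+1}^{-1}$ together with the sign argument already used in the proof of Proposition~\ref{P.ppp}: if $j(\alpha_{n+1})>0$ while its conjugate is negative, then tracking $\xi'=a_n+\xi_{n+1}'^{-1}$ forces $j(\alpha_n)>0$ as well. This extends $j$ to a single embedding on all of $\Q(\alpha)=\Q(\alpha_n)$ positive on every complete quotient. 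For the statement about the real limit, I would invoke Proposition~\ref{T.quadconv} (convergence in $\R$ always holds) and note that, since $j(\alpha)$ satisfies the periodicity equation~\eqref{E.quadratic} and is positive, it must coincide with the supremum/infimum limit of the odd/even convergents as analysed in Section~\ref{SS.r}; the positivity of $j(\alpha)$ together with $q_k,p_{k-1}>0$ pins down which root of the quadratic is the real limit.

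\medskip

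\textbf{Converse direction.} The heart of the theorem is producing a computable $N_\alpha$. The key idea is the contrapositive of the criterion just before the theorem: periodicity is equivalent to boundedness of $\abs{b_n}$, and by the purely-periodic analysis in Section~\ref{SS.shape}, a complete quotient lies in a real-positive regime exactly when $b_n^2<\Delta$, i.e. $\abs{b_n}<\sqrt{\Delta}$. So if \emph{every} complete quotient up to a suitable bound admits an embedding $j_n$ with $j_n(\alpha_n)>0$, I want to conclude that all the $b_n$ are forced to stay bounded, hence the expansion is periodic. The strategy is to show that a single fixed embedding $j$ must work for a long stretch: by Lemma~\ref{remark.negativi.restano.negativi} (referenced in the introduction as controlling when negativity persists), once a complete quotient fails to have a positive image under the \emph{correct} embedding, all subsequent ones fail too under that embedding. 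Using the exponential bounds of Proposition~\ref{P.bound.em}, namely $\abs{b_n}\ll_\alpha \ceigen(\ell)^n$ and $f_n<3n+O_\alpha(1)$, I would count how many distinct complete quotients of the admissible shape~\eqref{Part.shape} with $\abs{b_n}<\sqrt{\Delta}$ can exist: this is a finite list whose cardinality is explicitly bounded in terms of $\Delta$. Taking $N_\alpha$ to exceed this cardinality forces a repetition $\alpha_{n_1}=\alpha_{n_2}$ by the pigeonhole principle, yielding periodicity.

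\medskip

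\textbf{Main obstacle.} The delicate point is that the hypothesis only gives, for each $n$, \emph{some} embedding $j_n$ with $j_n(\alpha_n)>0$, and a priori this embedding could alternate with $n$ (Remark~\ref{R.emb} shows the correct embedding genuinely depends on the element, not just the field). I must rule out that the $b_n$ grow while the sign condition is repeatedly rescued by switching between the two embeddings. The resolution is that switching embeddings corresponds precisely to the conjugate $\alpha_n'$ also being positive under one embedding, which via $\alpha_n\alpha_n'=(b_n^2-\Delta)/(\ell^{2f_n}c_n^2)$ forces $b_n^2>\Delta$ for both embeddings to give mixed signs in a controlled way; combined with the persistence lemma and the requirement that the sign condition hold for \emph{every} $n\le N_\alpha$, the admissible embedding is eventually unique, confining $\abs{b_n}$ below $\sqrt{\Delta}$. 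Pinning down this uniqueness quantitatively — converting the persistence of negativity into an explicit step count after which only one embedding can survive — is the crux, and I would isolate it as the lemma doing the real combinatorial work before feeding the bound into the pigeonhole count.
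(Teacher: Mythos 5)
Your forward direction is sound and essentially the paper's argument (the backward propagation of positivity along $\alpha_n=a_n+\alpha_{n+1}^{-1}$ is even simpler than you make it: $j(\alpha_{n+1})>0$ and $a_n\ge 0$ give $j(\alpha_n)=a_n+j(\alpha_{n+1})^{-1}>0$ directly, with no need to pass through the conjugate). The converse, however, contains a genuine gap that you yourself flag as ``the crux'' without resolving. The hypothesis only provides, for each $n\le N$, \emph{some} embedding positive on $\alpha_n$; this is perfectly compatible with $b_n^2>\Delta$, namely when \emph{both} real embeddings of $\alpha_n$ are positive. Lemma~\ref{remark.negativi.restano.negativi} is of no help in that regime, since it only propagates negativity and nothing is negative yet; so you have no bound on how long the expansion can remain in the ``both embeddings positive, $\abs{b_n}>\sqrt{\Delta}$'' phase, and hence no computable $N_\alpha$. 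The missing idea (the paper's Step A) is a monotonicity argument: when $b_n^2>\Delta$ and both embeddings are positive, $\xi_n\xi_n'>0$ forces $b_n$ and $c_n$ to share a sign, the second recurrence in~\eqref{quadratic.recurrence} then forces $c_{n+1}$ (hence $b_{n+1}$) to carry the opposite sign, and the first recurrence yields $\abs{b_{n+1}}<\abs{b_n}$. This phase therefore lasts at most $\abs{b_0}-\floor{\sqrt{\Delta}}$ steps, after which either both embeddings are negative (and you are done) or $\abs{b_n}<\sqrt{\Delta}$; from then on Lemma~\ref{remark.negativi.restano.negativi} does confine $\abs{b_n}$ below $\sqrt{\Delta}$ for good, unless two negative embeddings appear.

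A secondary problem: you propose to bound the number of admissible complete quotients with $\abs{b_n}<\sqrt{\Delta}$ using the exponential estimates of Proposition~\ref{P.bound.em}. Those bounds grow with $n$ and cannot produce a finite list depending only on $\Delta$. The correct tool is the relation $\ell^{f_n+f_{n+1}}c_nc_{n+1}=\Delta-b_{n+1}^2$ from~\eqref{quadratic.recurrence}: once $\abs{b_n}<\sqrt{\Delta}$, the $c_n$ all share a sign and $\ell^{f_n}c_n$ ranges over at most $\Delta-b_n^2$ values for each of the at most $2t+1$ possible values of $b_n$, where $t=\floor{\sqrt{\Delta}}$, giving the explicit pigeonhole bound $(2t+1)\Delta-\frac{t(t+1)(2t+1)}{3}+1$ that, added to the Step A count, yields $N_\alpha$.
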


Before proving Theorem~\ref{general_condition}, we show a simple lemma.
\begin{lem}\label{remark.negativi.restano.negativi}
Let $\alpha\in\Q_\ell$ be a quadratic irrational over $\Q$ and $j:\Q(\alpha)\to\R$ an embedding such that $j(\alpha_m)<0$ for some complete quotient $\alpha_m$. Then, $j(\alpha_r)<0$ holds for all $r\geq m$.

In particular, if the embeddings $j_n$ exist as in the second part of Theorem~\ref{general_condition}, then the same embedding can be chosen independently of $n$.
\end{lem}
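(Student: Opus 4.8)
The plan is to reduce everything to the single algorithmic identity $\alpha_m=a_m+\alpha_{m+1}^{-1}$, which holds for every complete quotient, combined with the positivity of the partial quotients recorded earlier: for $n>0$ one has $0<a_n\le\ell-\ell^{-e_n}<\ell$, and $a_0>0$ under the standing normalization $|\alpha|_\ell\ge 1$. Fix the embedding $j$ and abbreviate $\xi_n:=j(\alpha_n)$. Since each $a_n$ is rational it is fixed by $j$, so applying $j$ to the identity yields $\xi_m=a_m+\xi_{m+1}^{-1}$, that is $\xi_{m+1}^{-1}=\xi_m-a_m$.

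First I would prove the one-step implication ``$\xi_m<0\Rightarrow\xi_{m+1}<0$''. Indeed, if $\xi_m<0$ then, because $a_m>0$, we get $\xi_{m+1}^{-1}=\xi_m-a_m<\xi_m<0$, and hence $\xi_{m+1}<0$. The first assertion of the lemma then follows immediately by induction on $r\ge m$ (for the inductive step from index $i\ge m$ to $i+1$ one uses $a_i>0$, which is automatic for $i\ge 1$ and holds for $i=0$ under the standing normalization).

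For the ``in particular'' part I would exploit that, since $\Q(\alpha)$ embeds in $\R$, it is a real quadratic field with exactly two real embeddings, $j$ and its conjugate $j'$, and that no $\xi_n$ vanishes because $\alpha_n$ is irrational (so ``not positive'' means genuinely negative). Suppose, for contradiction, that neither embedding makes every $\alpha_n$ with $n\le N$ positive. Then $j(\alpha_a)<0$ for some $a\le N$ and $j'(\alpha_b)<0$ for some $b\le N$. If $a\le b$, the first assertion applied to $j$ gives $j(\alpha_b)<0$, so both embeddings are negative on $\alpha_b$; if instead $b<a$, applying it to $j'$ gives $j'(\alpha_a)<0$, so both embeddings are negative on $\alpha_a$. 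In either case some $\alpha_n$ with $n\le N$ admits no positive real embedding, contradicting the existence of $j_n$ with $j_n(\alpha_n)>0$. Hence one of $j,j'$ works simultaneously for all $n\le N$ and may serve as the common embedding.

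The argument is entirely elementary, so I do not expect a genuine obstacle; the only points needing care are that the partial quotients are \emph{strictly} positive (so the inequality is strict and the induction does not stall), that $\xi_n\neq 0$ so that the dichotomy positive/negative is exhaustive, and the minor case split on whether $a\le b$ in the two-embedding argument.
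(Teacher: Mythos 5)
Your proof is correct and follows essentially the same route as the paper: the one-step implication from $j(\alpha_m)=a_m+j(\alpha_{m+1})^{-1}$ with $a_m>0$, followed by induction, is exactly the paper's argument, and your deduction of the ``in particular'' part is just a slightly more spelled-out version of the paper's remark that one can take every $j_n$ equal to the embedding other than the one that has gone negative.
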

\begin{proof}
By definition, for every $m\ge 0$ we have that $\alpha_{m}=a_m+\alpha_{m+1}^{-1}$. Therefore $j(\alpha_{m})=a_m+j(\alpha_{m+1})^{-1}$. If $j(\alpha_m)<0$, then $j(\alpha_{m+1})$ must be negative as well, because $a_m>0$. By induction this proves the first part of the statement.

For the second part we just need to remark that, if for a fixed embedding $j$ and complete quotient $\alpha_m$, we have $j(\alpha_m)<0$, then the same embedding $j$ will take negative values over all complete quotients from $\alpha_m$ on. We can then take all the embeddings $j_n$ equal to the other embedding $j'\neq j$.
\end{proof}

\begin{proof}
 [Proof of Theorem~\ref{general_condition}] Suppose first that $\alpha$ is a quadratic number in $\Q_\ell\setminus \Q$ having a(n eventually) periodic RCF expansion. As discussed in the previous proposition, for every $m$ greater or equal than some $m_0$, all complete quotients $\alpha_m$ of $\alpha$ have a purely periodic RCF expansion. 
Therefore, by Proposition~\ref{P.ppp}, there exists a unique embedding $j:\Q(\alpha)\to \R$ such that $j(\alpha_m)>0$ for all $m \ge m_0$. In particular, $\Q(\alpha)$ is a real quadratic field. 

Now, let us use again the fact that, for every fixed integer $h\ge 0$, $\alpha_{m+h}$ is a complete quotient of $\alpha_h$ for every $m\ge 0$. In particular, if we call $\{r_n/s_n\}_n$ the sequence of convergents to $\alpha_h$, by~\eqref{E.cq}, we have  
\[ 
\alpha_h=\frac{\alpha_{m_0+h}r_{m_0}+r_{m_0-1}}{ \alpha_{m_0+h}s_{m_0}+s_{m_0-1}},
\]  
hence 
\[
j(\alpha_h)=\frac{j(\alpha_{m_0+h})r_{m_0}+r_{m_0-1}}{j(\alpha_{m_0+h})s_{m_0}+s_{m_0-1}}.
\] 
As $j(\alpha_{m_0+h})$ for every $h> 0$ this shows that also $j(\alpha_h)>0$ as required.

\medskip

As seen in Subsection~\ref{SS.r}, $\liminf a_n>0$, which is ensured by periodicity, implies that the RCF expansion of $\alpha$ converges in $\R$. In view of the usual formulae relating $\alpha$ with the complete quotients, the corresponding limit must satisfy the same quadratic equation satisfied by $\alpha$, and the same holds for all complete quotients $\alpha_n$. Also, all of these limits must be  positive, so must be equal to $j(\alpha_n)$. This proves the first part of the statement.

\bigskip

To go in the opposite direction is more delicate. 
Let
\[
\alpha=\frac{b_0+\delta}{\ell^{f_0}c_0}
\]
be a representation of $\alpha$ satisfying the usual conditions~\eqref{E.shape} and~\eqref{first.shape}. We denote by $\xi$ and $\xi'$ the two real embeddings of $\alpha$ and by $\xi_n$ and $\xi_n'$ the real embeddings of $\alpha_n$ for all $n\ge 0$. Let also $t:=\floor{\sqrt{\Delta}}$ (notice that $\Delta>0$ since by assumption $\Q(\alpha)$ can be embedded in $\R$).
\medskip

{\bf Step A)} We show first that within at most $\max(0,\abs{b_0}-t )$ steps in the continued fraction expansion either we reach a complete quotient with two negative embeddings, or a complete quotient $\alpha_n$ satisfying the inequality $\abs{b_n}<\sqrt{\Delta}$.

Suppose that $\abs{b_0}>\sqrt{\Delta}$. As $\xi\xi'=\frac{b_0^2-\Delta}{(\ell^{f_0}c_0)^2}>0$, it follows that the two embeddings have the same sign, which we take to be positive otherwise there is nothing to prove. This in turn implies that $b_0$ and $c_0$ have the same sign.
If the same holds for $\alpha_1$ as well, \textit{i.e.} $\abs{b_1}>\sqrt{\Delta}$ and $\xi_1,\xi_1'>0$, we have similarly that $b_1c_1>0$.

Suppose for example that $b_0>0$. Then $c_0>0$ and the second equation in~\eqref{quadratic.recurrence} with $n=0$ implies that $c_1<0$ and then $b_1<0$. But then the first equation in~\eqref{quadratic.recurrence} gives that $\abs{b_0}-\abs{b_1}>0$. The same holds analogously if $b_0<0$. This shows that the absolute values of the $b_i$ decrease until $\abs{b_i}<\sqrt{\Delta}$ or both $\xi_i<0$ and $\xi_i'<0$ hold; this will happen in at most $\abs{b_0}-t$ steps.
\medskip

{\bf Step B)} By replacing $\alpha$ with a complete quotient, we can now assume that $\abs{b_0}<\sqrt{\Delta}$ holds. Arguing as before, this implies that $\xi\xi'<0$. Assume without loss of generality that $\xi>0$ and $\xi'<0$. By Lemma~\ref{remark.negativi.restano.negativi}, we have that $\xi_n'<0$ for all $n\geq 0$.
If, for a later index $m$, it holds that $\abs{b_m}>\sqrt{\Delta}$, then $\xi_m\xi_m'>0$ which implies $\xi_m,\xi_m'<0$. This shows that the inequality 
$\abs{b_n}<\sqrt{\Delta}$ holds until we reach a complete quotient with two negative embeddings.

Notice now that, under the inequality $\abs{b_n}<\sqrt{\Delta}$, equation~\eqref{quadratic.recurrence} implies that the $c_i$'s all have the same sign.
Therefore for every fixed value of $b_n$, the second equation in~\eqref{quadratic.recurrence} implies that the quantity $\ell^{f_n}c_n$ can assume at most $\Delta-b_n^2$ different values. 
However $b_n$ can assume only the $2t+1$ different values between $-t$ ant $t$.
Thus we see that, after at most $1+\sum_{i=-t}^t(\Delta-i^2)=(2t+1)\Delta-\frac{t(t+1)(2t+1)}{3}+1$ steps, either we reach a complete quotient with two negative embeddings or we get a repetition in the sequence of the complete quotients, which implies the periodicity of the RCF expansion of $\alpha$.
\medskip

Hence, if we take $N_{\alpha}:=\max(0,\abs{b_0}-t )+(2t+1)\Delta-\frac{t(t+1)(2t+1)}{3}+1$, and for all $n\le N_{\alpha}$ at least one between $\xi_n$ and $\xi_n'$ is positive, then the RCF expansion of $\alpha$ is periodic, proving the claim.
\end{proof}

\begin{rem}
We point out that there is an alternative argument to prove Theorem~\ref{general_condition} which uses well-known explicit bounds for the number of solutions of some $S$-unit equation. It was in fact the use of $S$-units which allowed us to find the algorithm using the real embedding. On the other hand, the bound on $N_{\alpha}$ produced with this approach is far worse than the one above. 
\end{rem}

\begin{rem}[\texttt{Explicit computation of the constants}]\label{explicit_constant}
  As shown in the proof, given $\alpha=\frac{b_0+\delta}{c_0\ell^{f_0}}$ of the usual shape~\eqref{E.shape} satisfying~\eqref{first.shape} and setting $t=\floor{\sqrt{\Delta}}$, the value $N_\alpha$ can be taken equal to
\[
 \max(0,\abs{b_0}-t)+(2t+1)\Delta-\frac{t(t+1)(2t+1)}{3}+1.
\]
However, this bound can be improved in the case that $\ell\mid \Delta$ by performing the steps described in Proposition~\ref{Prop.Part.shape} and choosing a more convenient representation for the complete quotients involved. 
  Let $\Delta=\ell^{2h}\tilde{\Delta}$ and $\tilde t=\floor{\sqrt{\tilde{\Delta}}}$. Notice that the sign of the quantities $b_i^2-\Delta_i$ which occur during the cancellation steps of Proposition~\ref{Prop.Part.shape} is not changed when divided by a power of $\ell$. Then if $\abs{b_i}>\sqrt{\Delta_i}$, the absolute value of the $b_i$ decreases at each step unless we find a complete quotient with two negative embeddings. This means that, after  $\max(h+2,\abs{b_0}-t)$ steps, both the cancellations of Proposition~\ref{Prop.Part.shape} and Step A of the proof will be completed. We can then perform Step B using $\tilde\Delta$ instead of $\Delta$, which saves a factor of $\ell^{3h}$ in the main term at the cost of performing at most $h+2\approx\log\Delta$ additional steps.

The value $N_\alpha$ can then be taken equal to
\[
\max(h+2,\abs{b_0}-t)+(2\tilde t+1)\tilde{\Delta}-\frac{\tilde t(\tilde t+1)(2\tilde t+1)}{3}+1.
\]
As $\sqrt{\tilde{\Delta}}-1<\tilde t < \sqrt{\tilde{\Delta}}$, this is bounded from above by
\[
\max\left(h+2,\abs{b_0}-\floor{\sqrt{\Delta}}\right)+\frac{4}{3}\tilde{\Delta}^{3/2}-\frac{1}{3}\tilde{\Delta}^{1/2}+1.
\]
\end{rem}
  
\begin{example}
Let us give some examples of application of Theorem~\ref{general_condition}. \\
Let us prove that the $3$-adic expansion of $\delta$, the only square root of $37$ in $\Q_3$ which is congruent to $1$ modulo $3$ considered in the Example~\ref{ex1} is not periodic. Indeed:
\begin{itemize}
\item $a_0=\lfloor\delta\rfloor_3=1$, hence $\delta= 1+ \frac{1}{\alpha_1}$, with $\alpha_1=\frac{1+\delta}{36}$. If we denote by  $\iota:\Q(\delta)\rightarrow \R$ the real embedding that sends $\delta$ in $\sqrt{37}$, and with $\iota_-$ the one sending $\delta$ in $-\sqrt{37}$, we have that $\iota(\alpha_1)>0$ and $\iota_-(\alpha_1)<0$;
\item $a_1=\lfloor\alpha_1\rfloor_3=\frac{5}{9}$, hence $\alpha_1=\frac{5}{9}+\frac{1}{\alpha_2}$,with $\alpha_2=-\frac{19+\delta}{9}$. So, we have that $\iota(\alpha_2)<0$ and $\iota_-(\alpha_2)<0$. By Theorem~\ref{general_condition}, we can conclude that the expansion of $\delta$ is not periodic.
\end{itemize}
In this case, the algorithm stops at the second step. There are also cases in which more steps are needed to decide whether the expansion is periodic or not. Take for example $\gamma$ the square root of 13 in $\Q_3$ which is congruent to $1$ modulo $3$. Take $\theta=\frac{2+\gamma}{12}$. Then
\[ \theta=\left [0, \frac{2}{3}, \frac{5}{9}, \frac{2}{3}, \frac{8}{3}, \frac{7}{3},\ldots \right ]. \]
In this case, the images of the complete quotients under the two real embeddings of $\Q(\gamma)$ are both negative for the first time at the fifth iteration. Indeed, it is easy to construct examples in which this phenomenon happens arbitrarily late.
\end{example}

\noindent We notice that Theorem~\ref{general_condition} allows us to conclude that certain classes of square roots of positive integers cannot have a periodic continued fraction. For instance, the following holds:
\begin{cor}
Let $\ell$ be an odd prime and $\Delta=1+k \ell^h$ not a square, with $h,k$ positive integers and $(k,\ell)=1$. Let $\delta \in \Q_{\ell} \setminus \Q$ be the square root of $\Delta$ congruent to $1$ modulo $\ell$. Then, if $\Delta > (\ell^h+1)^2$,  the RCF expansion of $\delta$ is not periodic.
\end{cor}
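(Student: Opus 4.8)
The plan is to apply Theorem~\ref{general_condition} to the specific number $\delta=\sqrt{\Delta}$ with $\Delta=1+k\ell^h$. The strategy is to show that, within the very first steps of the RCF expansion, one reaches a complete quotient both of whose real embeddings are negative; by the second part of Theorem~\ref{general_condition} (together with Lemma~\ref{remark.negativi.restano.negativi}) this forces non-periodicity. Concretely, I would track the quantities $b_m, c_m, f_m$ in the representation $\alpha_m=\frac{b_m+\delta}{\ell^{f_m}c_m}$ and use the sign of $\Delta-b_m^2$ to control the signs of the embeddings $\xi_m,\xi_m'$, exactly as in Step~A and Step~B of the proof of Theorem~\ref{general_condition}.

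First I would compute the initial data. Since $\ell$ is odd and $\delta\equiv 1\pmod\ell$, while $\Delta=1+k\ell^h\equiv 1\pmod{\ell^h}$, I expect $\lfloor\delta\rfloor_\ell$ to begin matching the expansion of $1$, so that $a_0=1$ and the first remainder $r_0=\delta-1$ has $v(r_0)$ governed by $h$ (because $\delta^2-1=k\ell^h$ forces $v(\delta-1)+v(\delta+1)=h$, and $v(\delta+1)=0$ as $\delta+1\equiv 2\not\equiv 0$, giving $v(\delta-1)=h$). Thus $\alpha_1=1/(\delta-1)=\frac{\delta+1}{k\ell^h}$, which has $f_1=h$ and $c_1=k$, with $\delta$ appearing with coefficient $+1$ after clearing as in Section~\ref{S.special}. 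Here the relevant $b$-value is $b_1=1$, so $\Delta-b_1^2=k\ell^h>0$; this means $\xi_1\xi_1'>0$ and the two embeddings of $\alpha_1$ share a sign, which is positive (since $\iota(\delta)=\sqrt\Delta>0$ makes $\iota(\alpha_1)>0$).

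The crux is the second step. The hypothesis $\Delta>(\ell^h+1)^2$ is precisely what I expect to drive the argument: it says $\sqrt\Delta>\ell^h+1$, i.e. $t=\lfloor\sqrt\Delta\rfloor\geq \ell^h+1$, which controls how large $a_1$ can be and hence how $b_2$ is produced via the recurrence $b_1+b_2=a_1\ell^{f_1}c_1$. I would compute $a_1=\lfloor\alpha_1\rfloor_\ell$ and feed it into~\eqref{quadratic.recurrence} to get $b_2$, then check the sign of $\Delta-b_2^2$. The key inequality to establish is $|b_2|>\sqrt{\Delta}$, which would give $\xi_2\xi_2'>0$; combined with $\xi_2'<0$ (inherited once one embedding has gone negative, via Lemma~\ref{remark.negativi.restano.negativi}) or with a direct sign check showing $\xi_2<0$, this yields a complete quotient with two negative embeddings. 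The main obstacle will be the bookkeeping that bounds $b_2$ from below in terms of $\ell^h$: one must show that the partial quotient $a_1$ is large enough (roughly of size $\ell^{h-f_2}$ or comparable) that $b_2=a_1\ell^{f_1}c_1-b_1$ overshoots $\sqrt\Delta$, and this is exactly where the quantitative assumption $\Delta>(\ell^h+1)^2$ must be used sharply rather than generously. Once $|b_2|>\sqrt\Delta$ and the negativity of both embeddings is confirmed, Theorem~\ref{general_condition} closes the argument, since a negative embedding at $\alpha_2$ propagates to all later complete quotients and rules out periodicity.
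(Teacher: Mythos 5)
Your overall plan coincides with the paper's: reach, within two steps, a complete quotient whose two real embeddings are both negative, and then invoke Theorem~\ref{general_condition} together with Lemma~\ref{remark.negativi.restano.negativi}. However, your write-up contains a sign error and, more importantly, leaves the crux unproved. The sign error: with $b_1=1$ and $\Delta-b_1^2=k\ell^h>0$ you have $\xi_1\xi_1'=\frac{b_1^2-\Delta}{(k\ell^h)^2}<0$, so the two real embeddings of $\alpha_1=\frac{1+\delta}{k\ell^h}$ have \emph{opposite} signs, not a common positive sign as you assert; indeed $\iota_-(\alpha_1)=\frac{1-\sqrt{\Delta}}{k\ell^h}<0$, and it is precisely this negative embedding that Lemma~\ref{remark.negativi.restano.negativi} propagates to all later complete quotients. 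Your subsequent appeal to ``$\xi_2'<0$, inherited once one embedding has gone negative'' tacitly uses this fact and contradicts your earlier claim that both embeddings of $\alpha_1$ are positive.

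The genuine gap is that the key quantitative step is only announced, never carried out: you say one ``must show that the partial quotient $a_1$ is large enough'' that $b_2$ overshoots $\sqrt{\Delta}$, but you do not determine $a_1$ nor prove any lower bound for it. The paper closes exactly this gap with an $\ell$-adic computation: from the binomial expansion of $\sqrt{1+k\ell^h}$ one gets $\delta=1+\tfrac{k\ell^h}{2}+\ell^{2h}C$ with $C\in\Z_\ell$, so $\alpha_1=\frac{\delta+1}{k\ell^h}$ has $\ell^{-h}$-digit equal to the nonzero residue of $2/k$, whence $a_1=\pfloor{\alpha_1}\ge \ell^{-h}$. On the real side $\iota(\alpha_1)=\frac{1+\sqrt{\Delta}}{\Delta-1}=\frac{1}{\sqrt{\Delta}-1}$, and the hypothesis $\Delta>(\ell^h+1)^2$ gives $\sqrt{\Delta}-1>\ell^h$, hence $\iota(\alpha_1)-a_1<\ell^{-h}-\ell^{-h}=0$ and therefore $\iota(\alpha_2)=\left(\iota(\alpha_1)-a_1\right)^{-1}<0$; combined with $\iota_-(\alpha_2)<0$ this triggers Theorem~\ref{general_condition}. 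Without the lower bound $a_1\ge\ell^{-h}$ and its comparison with $\iota(\alpha_1)$ (equivalently, without actually verifying $|b_2|>\sqrt{\Delta}$), your argument does not conclude.
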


\begin{proof}
Let us compute the RCF expansion of $\delta$:
$a_0=1$, hence $\delta=1+\frac{1}{\alpha_1}$, with $\alpha_1=\frac{1+\delta}{\Delta-1}=\frac{1+\delta}{k\ell^h}$.
If we denote by $\iota:\Q(\delta) \rightarrow \R$ the real embedding obtained sending $\delta$ in $\sqrt{\Delta}$ (the positive real square root of $\Delta$) and by $\iota_-$ the other real embedding, then $\iota(\alpha_1)>0$ and $\iota_-(\alpha_1)<0$.

To calculate the second complete quotient, we have first to compute the $\ell$-integral part of $\alpha_1$. As by assumption $\Delta=1+k \ell^h$, using the Taylor expansion of $\sqrt{1+x}$, we can write $\delta$ as $\delta=1+\frac{k\ell^h}{2}+\ell^{2h}C$ for some $C\in \Z_{\ell}$. This implies that
\[
\lfloor\alpha_1\rfloor_{\ell}=\left \lfloor \frac{\delta+1}{k \ell^h} \right \rfloor_{\ell}= \left \lfloor \frac{2+\frac{k\ell^h}{2}}{k \ell^h} \right \rfloor_{\ell}= \left \lfloor \frac{2}{k \ell^h}+ \frac{1}{2} \right \rfloor_{\ell}.
\]
Now, as $v\left(\frac{1}{2} \right)= 0$, we have that $\lfloor\alpha_1 \rfloor_{\ell}\ge \frac{1}{\ell^h}$. Furthermore the assumption $\Delta > (1+\ell^{h})^2$ implies that $\sqrt{\Delta}>1+\ell^{h}$. Putting these two inequalities together we have that
$$\frac{1}{\iota(\alpha_2)}=\iota(\alpha_1)- \lfloor\alpha_1 \rfloor_{\ell}\le \iota(\alpha_1) - \frac{1}{\ell^{h}}=\frac{1+\sqrt{\Delta}}{\Delta-1}-\frac{1}{\ell^{h}}=\frac{1}{\sqrt{\Delta}-1}-\frac{1}{\ell^{h}}<\frac{1}{\ell^{h}}-\frac{1}{\ell^{h}}=0,$$
so $\iota(\alpha_2)<0$. But also $\iota_-(\alpha_2)<0$, as $\iota_-(\alpha_1)<0$, and by Theorem~\ref{general_condition} this implies that the RCF expansion of $\delta$ is not periodic, as wanted.
\end{proof}


\subsection{Periodicity of the expansion for varying \texorpdfstring{$\ell$}{l}}

In this section we show that, for a fixed irrational quadratic number $\alpha$, there are at most finitely many primes $\ell$ and embeddings $\iota:\Q(\alpha)\to\Ql$ such that $\iota(\alpha)$ has a periodic RCF expansion, thus answering a question posed by Professor Corvaja. 

\begin{prop}\label{prop.quadratic.varyingl}
Let $\alpha\in\Ql$ be a root of the irreducible polynomial $Ax^2+Bx+C$, with  $A,B,C\in\Z, A>0$ and $\Delta:=B^2-4AC>0$. Assume that $\ell\nmid A$ and $\ell>\max\left(\frac{\Delta}{4A},C\right)$. Then the RCF expansion of $\alpha$ is not periodic.
\end{prop}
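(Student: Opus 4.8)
The plan is to exhibit a single complete quotient --- in fact the very first one $\alpha_1$ --- both of whose real embeddings are negative. Once this is done, Lemma~\ref{remark.negativi.restano.negativi} guarantees that both embeddings stay negative on all later complete quotients, so no embedding $j\colon\Q(\alpha)\to\R$ can keep every $\alpha_n$ positive, and the first part of Theorem~\ref{general_condition} then rules out periodicity. Thus the entire task is reduced to locating one complete quotient with two negative embeddings.

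First I would normalise. Setting $\delta:=2A\alpha+B\in\Q_\ell$ gives $\delta^2=\Delta$ and $\alpha=\frac{-B+\delta}{2A}$, so $\alpha$ has the shape~\eqref{E.shape} with $b_0=-B$ and denominator $2A$. I note that $\ell>\abs{C}$: this is immediate if $C\ge0$, while if $C<0$ then $\Delta=B^2-4AC\ge 4A\abs{C}$ forces $\Delta/(4A)\ge\abs{C}$. In particular $\ell\nmid C$, so $\alpha\alpha'=C/A$ is an $\ell$-adic unit and $\alpha+\alpha'=-B/A\in\Z_\ell$; a valuation with $\vl(\alpha)<0$ would force $\vl(\alpha')>0$ and hence $\vl(\alpha+\alpha')<0$, a contradiction, so $\vl(\alpha)=0$. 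Consequently the integral part $a_0=\lfloor\alpha\rfloor_\ell$ is an ordinary integer with $1\le a_0<\ell$ and $a_0\equiv\alpha\pmod\ell$. (For $\ell$ odd the denominator $2A$ is prime to $\ell$; the case $\ell=2$, which can occur only for the few $\Delta<8A$, $C\le1$, is handled by the analogous computation with $f_0=1$ and I would dispose of it separately.)

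The heart of the argument is to prove that $a_0$ lies strictly to the right of the larger real root $\xi=\frac{-B+\sqrt{\Delta}}{2A}$; write $\xi'=\frac{-B-\sqrt{\Delta}}{2A}\le\xi$ for the other real root and $P(x)=Ax^2+Bx+C=A(x-\xi)(x-\xi')$. Since $a_0\equiv\alpha\pmod\ell$ and $P(\alpha)=0$, one has $P(a_0)\equiv0\pmod{\ell}$ in $\Z$, while $P(a_0)\ne0$ by irrationality, so $\abs{P(a_0)}\ge\ell$. Now I rule out the two alternatives to $a_0>\xi$. If $a_0\in[\xi',\xi]$ then
\[
\abs{P(a_0)}=A(\xi-a_0)(a_0-\xi')\le A\frac{(\xi-\xi')^2}{4}=\frac{\Delta}{4A}<\ell,
\]
a contradiction. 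If instead $a_0<\xi'$, then $\xi'>a_0\ge1>0$, hence $C=A\xi\xi'>0$, and the identity $(\xi-a_0)(\xi'-a_0)-\xi\xi'=a_0\bigl(a_0-(\xi+\xi')\bigr)\le0$ gives $0<P(a_0)\le A\xi\xi'=C<\ell$, again a contradiction. Hence $a_0>\xi$. I expect this to be the main obstacle: the two hypotheses enter in a genuinely complementary way, with $\ell>\Delta/(4A)$ forbidding $a_0$ between the roots and $\ell>C$ forbidding it below them, both through the mechanism $\ell\mid P(a_0)\ne0$.

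Finally I read off $\alpha_1=\frac{b_1+\delta}{\ell^{f_1}c_1}$ from the recurrence~\eqref{quadratic.recurrence} at $n=0$: one finds $b_1=2Aa_0+B$ and $\ell^{f_1}c_1=\frac{\Delta-b_1^2}{2A}=-2P(a_0)$. Because $a_0>\xi$ and $2A\xi+B=\sqrt{\Delta}$, we get $b_1=2A(a_0-\xi)+\sqrt{\Delta}>\sqrt{\Delta}>0$, while $P(a_0)=A(a_0-\xi)(a_0-\xi')>0$ forces $\ell^{f_1}c_1<0$. Therefore the two real embeddings $\frac{b_1\pm\sqrt{\Delta}}{\ell^{f_1}c_1}$ each have a positive numerator over a negative denominator and are both negative. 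This produces the desired complete quotient with two negative embeddings, and the conclusion follows as explained in the first paragraph.
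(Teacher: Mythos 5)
Your proof is correct and follows essentially the same route as the paper: show $v(\alpha)=0$ so that $a_0\in\Z$ with $0<a_0<\ell$, use $\ell\mid P(a_0)\neq 0$ together with $\ell>\Delta/(4A)$ and $\ell>C$ to force $a_0$ beyond both real roots, and conclude that $\alpha_1$ has two negative real embeddings, which kills periodicity by Theorem~\ref{general_condition}. Your version is slightly more careful in two spots the paper glosses over --- deducing $\ell>\abs{C}$ (hence $\ell\nmid C$) from the hypotheses, and phrasing the case analysis geometrically rather than via the roots of $P(x)=\ell$ --- but the substance is identical.
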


\begin{proof}
Because $\ell\nmid AC$, it is easy to see that $v(\alpha)=0$ and therefore $a:=\pfloor{\alpha}\in\Z$.
We have that $Aa^2+Ba+C\equiv 0 \pmod \ell$, because $a$ is the $\ell$-adic integral part of $\alpha$. The integral number $Aa^2+Ba+C$ cannot be zero, because the polynomial $Ax^2+Bx+C$ is irreducible and $a\in\Z$, therefore $\abs{Aa^2+Ba+C}\geq \ell$.

It is impossible that $Aa^2+Ba+C\leq -\ell$, because $\ell>\frac{\Delta}{4A}$.
The smallest solution of $Ax^2+Bx+C=\ell$ is negative, because $\ell>C$, but $a>0$ so the only possibility is that $a\geq \frac{-B+\sqrt{\Delta+4A\ell}}{2A}$, which is strictly bigger than both the real roots of $Ax^2+Bx+C$. But then both real embeddings of $\alpha_1=(\alpha-a)^{-1}$ are negative and Theorem~\ref{general_condition} implies that the RCF expansion of $\alpha$ is not periodic.
\end{proof}

\begin{cor}
Let $f(x)=Ax^2+Bx+C\in \Z[x]$ be an irreducible quadratic polynomial. Then there are at most finitely many primes $\ell$ such that there exists a root of $f(x)$ in $\Ql$ with a periodic RCF expansion.
\end{cor}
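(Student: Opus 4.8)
The plan is to deduce this directly from Proposition~\ref{prop.quadratic.varyingl}, after disposing of two sign issues that the Proposition does not itself address: the sign of the leading coefficient $A$, and the sign of the discriminant $\Delta:=B^2-4AC$. First I would record that, since $f$ is irreducible over $\Z$ and hence (by Gauss's lemma) over $\Q$, its discriminant $\Delta$ is not a perfect square; in particular $\Delta\neq 0$, and every root of $f$ lying in some $\Q_\ell$ is automatically a quadratic irrational over $\Q$. Next, by replacing $f$ with $-f$ if necessary---an operation which changes neither the roots of $f$ nor their RCF expansions, and which preserves both irreducibility and the value of $\Delta$---I may assume from now on that $A>0$.

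The case $\Delta<0$ is then immediate from Theorem~\ref{T.periodic}. Here the splitting field $\Q(\alpha)=\Q(\sqrt{\Delta})$ of $f$ is an imaginary quadratic field, which admits no embedding into $\R$; hence no root of $f$ in any $\Q_\ell$ can have a periodic RCF expansion, and the set of primes in question is in fact empty.

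It remains to treat $\Delta>0$, which is precisely the regime covered by Proposition~\ref{prop.quadratic.varyingl}. That proposition asserts that whenever $\ell\nmid A$ and $\ell>\max\left(\frac{\Delta}{4A},C\right)$, no root of $f$ in $\Q_\ell$ has a periodic RCF expansion. Taking the contrapositive, any prime $\ell$ for which some root of $f$ in $\Q_\ell$ \emph{does} have a periodic expansion must satisfy either $\ell\mid A$ or $\ell\le\max\left(\frac{\Delta}{4A},C\right)$. Since $A$, $\Delta$ and $C$ are fixed integers, only finitely many primes meet either of these two conditions, and this bounds the set of exceptional primes, completing the argument.

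I do not anticipate any genuine obstacle here, as the entire quantitative content sits inside Proposition~\ref{prop.quadratic.varyingl}. The only care required is the bookkeeping reduction to $A>0$ and the separate (vacuous) disposal of the case $\Delta<0$ via the embedding obstruction of Theorem~\ref{T.periodic}; neither step involves any estimate beyond noting that finitely many primes divide $A$ or lie below an explicit fixed bound.
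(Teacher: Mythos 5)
Your proposal is correct and follows essentially the same route as the paper: reduce to $A>0$, dispose of $\Delta<0$ via Theorem~\ref{T.periodic}, and apply Proposition~\ref{prop.quadratic.varyingl} for all sufficiently large primes when $\Delta>0$. Your version merely spells out the finitely many exceptional primes (those dividing $A$ or bounded by $\max(\Delta/4A,C)$) more explicitly than the paper does.
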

\begin{proof}
Without loss of generality, we assume $A>0$. Let $\ell$ be a prime number such that $f(x)$ has a root in $\Ql$. If $\Delta:=B^2-4AC<0$, then Theorem~\ref{T.periodic} guarantees that the RCF expansion of both roots of $f(x)$ in $\Ql$ is not periodic. The same holds by Proposition~\ref{prop.quadratic.varyingl} if $\Delta>0$ and $\ell$ is big enough. 
\end{proof}


\begin{section}{Algorithms} \label{algorithms}
In this section we collect the pseudo-code implementation of the decision algorithms that have been described along the paper.
\subsection{Rational numbers}
The first is an algorithm that decides in finite time whether the RCF expansion of a rational number is periodic or finite.
\begin{algorithm}[H]
  \caption{Deciding on the periodicity of the RCF expansion of a rational number}
  \label{Algo1}
  \begin{algorithmic}[1]
    \REQUIRE A rational number $\alpha=a/b$.
    \ENSURE The algorithm tells whether the RCF expansion of $\alpha$ is finite or periodic.
    \STATE $x:=\alpha$
    \STATE $B_1:=\max\left(\frac{\log b}{\log\ell},2\right)$
    \FOR{$i=1$ to $B_1$}
        \IF{$x<0$}
            \RETURN{The expansion is periodic.}
        \ENDIF
        \STATE $y:=x-\pfloor{x}$
        \IF{$y==0$}
            \RETURN{The expansion is finite.}
        \ENDIF
        \STATE $x:=1/y$
    \ENDFOR
   \end{algorithmic}
 \end{algorithm}

The second is an algorithm that computes in finite time the RCF expansion of a rational number.
\begin{algorithm}[H]
  \caption{Computing the RCF expansion of a rational number}
  \label{Algo2}
  \begin{algorithmic}[1]
    \REQUIRE A rational number $\alpha=a/b$.
    \ENSURE The algorithm outputs the RCF expansion of $\alpha$, divided into pre-periodic and periodic part if the expansion is not terminating.
    \STATE $x:=\alpha$
    \STATE $B_2:=32\ell H(\alpha)^2$
    \FOR{$i=1$ to $B_2+1$}
        \IF{$x==-\ell^{-1}$}
            \RETURN{$\overline{\ell-\ell^{-1}}$}
        \ENDIF
        \PRINT{$\pfloor{x}$}
        \STATE $y:=x-\pfloor{x}$ 
        \IF{$y==0$}
            \RETURN{}
        \ENDIF
        \STATE $x:=1/y$
    \ENDFOR
   \end{algorithmic}
 \end{algorithm}

Both algorithms are easy to describe and analyse. The first one executes the continued fraction algorithm until the expansion terminates or a negative complete quotient appears. Either one or the other of the stopping conditions will occur within $B_1=\max\left(\frac{\log b}{\log\ell},2\right)$ steps, according to the Quantified Algorithm (i) and to Remark~\ref{remark.rational}, while Theorem~\ref{T.terminating} ensures that the existence of a negative complete quotient implies the periodicity of the expansion. The running time is clearly bounded by $O\left(\frac{h(\alpha)}{\log\ell}\right)$ steps.

\bigskip

The second algorithm executes the same operations and prints the partial quotients of the expansion until the expansion terminates or a complete quotient equal to $-\ell^{-1}$ is found, at which point the expansion becomes periodic by Example~\ref{ex.l} and the algorithm prints the periodic part $\overline{\ell-\ell^{-1}}$. Theorem~\ref{T.terminating} again ensures that one of these two conditions will eventually occur and the Quantified Algorithm (ii) together with Remark~\ref{remark.rational} guarantees that one of the stopping conditions will occur within $B_2+1$ steps.
The running time of Algorithm~\ref{Algo2} can be then bounded by $O(\ell H(\alpha)^2)$.

\bigskip

Both Algorithm~\ref{Algo1} and \ref{Algo2} only need to store in memory the values of $x$ and $y$, which are updated at every step. We recall that the space required to store an algebraic number of height $\alpha$ is about $h(\alpha)$; then, the space complexity of both algorithms is bounded by $O\left(\max_{0\leq i\leq B_{1,2}} h(\alpha_i)\right)$. In particular, for the first algorithm using~\eqref{height_cq}  we get  $O\left(H(\alpha)^{\frac{\log 2}{\log\ell}}\log\left(\ell H(\alpha)\right)\right)$, while for the second algorithm the Quantified Algorithm (ii) gives $O\left(\ell H(\alpha)\right)$.


\subsection{Quadratic irrational numbers}
The third algorithm implements the second part of Proposition~\ref{Prop.Part.shape}, that is it computes the first steps of the RCF expansion of a quadratic irrational number until a complete quotient satisfying conditions~\eqref{Part.shape} is found.
\begin{algorithm}[H]
  \caption{The RCF expansion of a quadratic irrational number}
  \label{Algo3}
  \begin{algorithmic}[1]
    \REQUIRE A quadratic irrational number $\alpha$ with $v(\alpha)\leq 0$, represented by a 4-tuple $(f,c,b,\Delta)$.
    \ENSURE The algorithm outputs the first $M\leq \frac{v(\Delta)}{2}+2$ steps of the RCF expansion of $\alpha$, and a 4-tuple $(f_M,c_M,b_M,\tilde\Delta)$ representing $\alpha_M$ and satisfying~\eqref{Part.shape}.
    \STATE $d:=c/\gcd(c,b^2-\Delta)$
    \STATE $c:=cd$
    \STATE $b:=bd$
    \STATE $\Delta:= d^2\Delta$
    \STATE $h:=v(\Delta)/2$
    \FOR{$i=1$ to $h+2$}
        \STATE $\alpha:=\frac{b+\sqrt{\Delta}}{\ell^{f}c}$
        \STATE $a:=\pfloor{\alpha}$
        \PRINT{$a$}
        \STATE $b:=a \ell^{f} c - b$
        \STATE $c:=(\Delta-b^2)/(\ell^{v(\Delta-b^2)}c)$   
        \STATE $f:=v(\Delta-b^2)-f$
        \STATE $k:=\min(v(\Delta)/2,v(b))$
        \STATE $b:=b/\ell^k$
        \STATE $\Delta:=\Delta/\ell^{2k}$
        \STATE $f:=f-k$    
        \IF{conditions~\eqref{Part.shape} are satisfied}
            \RETURN{$\alpha_i$ is represented by $(f,c,b,\Delta)$}
        \ENDIF 
    \ENDFOR
   \end{algorithmic}
 \end{algorithm}
Lines 1--4 ensure that condition~\eqref{first.shape} is satisfied. Lines 7--12 perform one step of the expansion and the recurrence formulae~\eqref{quadratic.recurrence} and lines 13-16 simplify as many factors $\ell$ as possible.  Proposition~\ref{Prop.Part.shape} guarantees that at as long as $\Delta$ remains divisible by $\ell$ at least one factor $\ell$ is simplified at every step, and that when $\ell\nmid \Delta$ is satisfied, the remaining conditions are also satisfied within two more steps.

The time complexity is clearly bounded by $O(h(\alpha))$, which is $O\left(\frac{\log\Delta}{\log\ell}\right)$.

The algorithm stores in memory at any time only the data relative to one single step of the RCF expansion. A clear upper bound for the quantities $\Delta,\abs{b},\abs{c \ell^f}$ is given by the values of the recurrence sequences defined by~\eqref{quadratic.recurrence} without simplifying any factor $\ell$. These recurrence sequences are bounded in Proposition~\ref{P.bound.em}.

We have that the space complexity is bounded by $O\left(\max_{0\leq i\leq h+2} h(\alpha_i)\right)$. The height $h(\alpha_i)$ is $O(\log\Delta+\log\abs{b_i}+\log\abs{c_h\ell^{f_i}})$ so, according to Proposition~\ref{P.bound.em}, we have
\begin{align*}
O\left(\max_{0\leq i\leq h+2} h(\alpha_i)\right)&=    O\left(\log(\Delta)+h\log(\ceigen(\ell))+\log\ceigensnd(\alpha)\right)\\
&=O\left(\log\left (\Delta+b^2+c\ell^f \right)\right),
\end{align*}
because $\log(\ceigen(\ell))/\log\ell<3$ independently of $\ell$.

\bigskip

The last algorithm implements Theorem~\ref{general_condition}. It decides whether the RCF expansion of a quadratic irrational number is periodic, and in this case it computes the preperiodic and periodic parts of the expansion.
\begin{algorithm}[H]
  \caption{The RCF expansion of a quadratic irrational number}
  \label{Algo4}
  \begin{algorithmic}[1]
    \REQUIRE A quadratic irrational number $\alpha$, represented by a 4-tuple $(f,c,b,\Delta)$ which satisfies~\eqref{E.shape} and~\eqref{first.shape}.
    \ENSURE The algorithm outputs the RCF expansion of $\alpha$, divided into pre-periodic and periodic part, if the expansion is periodic; otherwise it tells that the expansion is aperiodic.
    \IF{$\Delta<0$}
        \RETURN{The expansion is not periodic.}
    \ENDIF
    \STATE $x_0:=\alpha$
    \STATE $t:=\floor{\sqrt{\Delta}}$
    \STATE $N:=\max(0,\abs{b}-t)+(2t+1)\Delta-\frac{t(t+1)(2t+1)}{3}+1$
    \FOR{$i=0$ to $N$}
        \IF{both real embeddings of $x_i$ are negative}
            \RETURN{The expansion is not periodic.}
        \ENDIF
        \STATE $y:=x_i-\pfloor{x_i}$
        \STATE $x_{i+1}:=1/y$
        \FOR{$j=0$ to $i$}
            \IF{$x_j==x_{i+1}$}
                \PRINT{Preperiodic part:}
                \FOR{$k=0$ to $j-1$}
                    \PRINT{$\pfloor{x_k}$}
                \ENDFOR
                \PRINT{Periodic part:}
                \FOR{$k=j$ to $i$}
                    \PRINT{$\pfloor{x_k}$}
                \ENDFOR
                \RETURN
            \ENDIF
        \ENDFOR
    \ENDFOR
   \end{algorithmic}
 \end{algorithm}
 The algorithm simply executes the RCF iterations and at each step it compares the new complete quotient with all the previous ones until a repetition is detected or a complete quotient with two negative embeddings is reached. Thanks to Theorem~\ref{general_condition} we know that in at most $N$ steps one of these conditions will occur.

Due to the nested iterations, the time complexity is $O(N^2)$, which is $O\left(\Delta^3+b^2\right)$.
 
 Unlike the previous algorithms, this one needs to store in memory the whole sequence of complete quotients in order to detect repetitions. The space complexity is therefore bounded by $O\left(\sum_{i=0}^{N}h(\alpha_i)\right)$.

By~\eqref{height_cq2} we have that $h(\alpha_i)\leq h(\alpha)+s_i\log\ell+i\log(2\ell)$. As shown in the proof of Proposition~\ref{P.bound.em}  we have that $e_i\leq f_i<3i+\ceigentrd(\alpha)$, so that $s_i=O(i^2+i\ceigentrd(\alpha))$ and
\[
O\left(\sum_{i=0}^{N}h(\alpha_i)\right)=O\left(N h(\alpha)+ N^3 \log\ell+N^2 \ceigentrd(\alpha)\log\ell+N^2\log(2\ell)\right).
\]

We already remarked analysing the previous algorithm that $$h(\alpha)=O\left(\log\Delta+\log\abs{b}+\log\abs{c\ell^{f}}\right),$$ while $\ceigentrd(\alpha)=O(\log(\Delta+b^2+\abs{c\ell^f}))$, as computed in Proposition~\ref{P.bound.em}.

In the end, the space complexity of the algorithm is bounded by
\[
O\left(\left(\Delta^\frac{3}{2}+\abs{b}\right)^3 \log\ell \left(\log\Delta+\log\abs{b}+\log\abs{c\ell^f}\right)\right).
\]

\begin{rem}
   As noticed in Remark~\ref{explicit_constant}, if $\Delta= \ell^h \tilde \Delta$ for some $h\ge 1$ and $(\tilde \Delta, \ell)=1$, it is more convenient to apply first Algorithm~\ref{Algo3} to reduce $\alpha$ to the form~\eqref{Part.shape}, and then to apply Algorithm~\ref{Algo4} to the reduced form $(f_M, c_M, b_M, \tilde \Delta)$. Using the improved estimate for $N$ that comes out of Remark~\ref{explicit_constant}, we have that the total complexity is bounded by $O\left (h^2+ \tilde \Delta^3 + b^2  \right )$ in place of $O(\Delta^3+b^2)$. 
\end{rem}
\end{section}


\section*{Acknowledgments}
The authors would like to thank the anonymous referee for several helpful comments and remarks, Professor Shigeki Akiyama for suggesting useful references, Gabriel Dill for useful discussions and comments and the Scuola Normale Su\-pe\-rio\-re for support. The first author was funded by the INdAM [Borsa Ing. G. Schirillo], the European Research Council [267273] and the the Engineering and Physical Sciences Research Council [EP/N007956/1]. The second author thanks the University of Basel and the Centro di Ricerca Matematica Ennio de Giorgi for support.


\bibliographystyle{amsalpha}
\bibliography{biblio}

\end{document}